\newtheorem{theorem}{Theorem}[section]
\newtheorem{definition}[theorem]{Definition}
\newtheorem{proposition}[theorem]{Proposition}
\newtheorem{lemma}[theorem]{Lemma}
\begin{document}

\title{Half-liberated manifolds, and their quantum isometries}

\author{Teodor Banica}
\address{T.B.: Department of Mathematics, Cergy-Pontoise University, 95000 Cergy-Pontoise, France. {\tt teodor.banica@u-cergy.fr}}

\subjclass[2000]{46L65 (46L54)}
\keywords{Half-liberation, Quantum isometry}

\begin{abstract}
We discuss the half-liberation operation $X\to X^*$, for the algebraic submanifolds of the unit sphere, $X\subset S^{N-1}_\mathbb C$. There are several ways of constructing this correspondence, and we take them into account. Our main results concern the computation of the affine quantum isometry group $G^+(X^*)$, for the sphere itself.
\end{abstract}

\maketitle

\section*{Introduction}

The notion of noncommutative space goes back to an old theorem of Gelfand, which states that any commutative $C^*$-algebra must be of the form $C(X)$, for a certain compact space $X$. One can therefore define the category of ``noncommutative compact spaces'' to be the category of $C^*$-algebras, with the arrows reversed. The category of usual compact spaces embeds then covariantly into this category, via $X\to C(X)$.

We will be interested here in noncommutative analogues of the compact algebraic manifolds $X\subset\mathbb C^N$. These are by definition the duals of the universal $C^*$-algebras defined with generators $z_1,\ldots,z_N$, subject to (noncommutative) polynomial relations:
$$C(X)=C^*\left(z_1,\ldots,z_N\Big|P_i(z_1,\ldots,z_N)=0\right)$$

The Gelfand theorem tells us that this construction covers all the compact algebraic manifolds $X\subset\mathbb C^N$. In general, the axiomatization of the algebras on the right is quite a tricky problem. Instead of getting into details here, let us just say that the family of noncommutative polynomials $\{P_i\}$ must be by definition such that the biggest $C^*$-norm on the universal $*$-algebra $<z_1,\ldots,z_N|P_i(z_1\ldots,z_N)=0>$ is bounded.

The compact quantum Lie groups, axiomatized by Woronowicz in \cite{wo1}, \cite{wo2}, and their homogeneous spaces, provide some key examples of such manifolds. Technically speaking, one problem with such quantum groups is that they lack an analogue of a Lie algebra. As explained in \cite{wo1}, \cite{wo2}, one solution to this issue comes from the intensive use of representation theory, in order to overcome the lack of geometric techniques.

The aim of this paper is to use some quantum group ideas, coming from representation theory, in the complex manifold setting. Let $X$ be as above, and consider its classical version $X_{class}\subset\mathbb C^N$, obtained by dividing the algebra $C(X)$ by its commutator ideal:
$$C(X_{class})=C^*_{comm}\left(z_1,\ldots,z_N\Big|P_i(z_1,\ldots,z_N)=0\right)$$

We can think then of $X$ as being a ``liberation'' of $X_{class}$, and the problem is that of understanding how the correspondence $X_{class}\to X$ can appear.

This latter question was solved in the quantum group case in \cite{bsp}, by using some inspiration from Wang's papers \cite{wa1}, \cite{wa2}, from the Weingarten formula \cite{bco}, \cite{csn}, \cite{wei}, and from free probability theory \cite{bpa}, \cite{spe}, \cite{vdn}. Among the findings there, and from the related papers \cite{bve}, \cite{bdd}, \cite{bdu}, is the fact that, for liberation purposes, the usual commutation relations $ab=ba$ can be succesfully replaced by the half-commutation relations $abc=cba$. This is actually a quite non-trivial phenomenon, which comes from the fact that the half-commutation relations $abc=cba$ have a deep categorical meaning. See \cite{bsp}.

As explained in \cite{bdd}, \cite{bdu}, there are several possible ways of half-liberating a manifold $X\subset S^{N-1}_\mathbb C$, and we will take this into account. We will show here that, under suitable assumptions on $X\subset S^{N-1}_\mathbb C$, we have a half-liberation diagram for it, as follows:
$$\xymatrix@R=15mm@C=15mm{
X\ar[r]&X^{**}\ar[r]&X^*\\
X^-\ar[r]\ar[u]&X^\circ\ar[u]\ar[r]&X^\#\ar[u]}$$

Our main results will concern the sphere $X=S^{N-1}_\mathbb C$ itself. More specifically, we will be interested in computing the quantum isometry groups of its various half-liberations. Our approach here will be based on the affine quantum isometry group formalism \cite{chi}, \cite{gos}, \cite{hua}, \cite{qsa}, with various technical ingredients from \cite{ba1}, \cite{bve}, \cite{bdu}, \cite{rau}. We will prove that the affine quantum isometry groups of the $6$ half-liberated spheres are as follows:
$$\xymatrix@R=14mm@C=11mm{
S^{N-1}_\mathbb C\ar[r]&S^{N-1}_{\mathbb C,**}\ar[r]&S^{N-1}_{\mathbb C,*}\\
\mathbb TS^{N-1}_\mathbb R\ar[r]\ar[u]&S^{N-1}_{\mathbb C,\circ}\ar[u]\ar[r]&S^{N-1}_{\mathbb C,\#}\ar[u]}\quad
\xymatrix@R=10mm@C=10mm{\\ \longrightarrow}
\quad
\xymatrix@R=14.8mm@C=14mm{
U_N\ar[r]&U_N^{**}\ar[r]&U_N^*\\
\mathbb TO_N\ar[r]\ar[u]&U_N^\circ\ar[u]\ar[r]&U_N^\#\ar[u]}$$

In other words, our result will state that, for the sphere $X=S^{N-1}_\mathbb C$ itself, the quantum isometry groups of the half-liberations are the half-liberations of the usual isometry group. This could be thought of as being related to the various rigidity results in \cite{bhg}, \cite{gjo}.

The paper is organized as follows: in 1-2 we discuss the half-liberation operation for the complex sphere itself, in 3-4 we study the associated quantum isometry groups, and in 5-6 we discuss the case of more general algebraic manifolds $X\subset S^{N-1}_\mathbb C$.

\medskip

\noindent {\bf Acknowledgements.} I would like to thank Julien Bichon for several useful discussions, and the anonymous referee for a careful reading of the manuscript. This work was partly supported by the NCN grant 2012/06/M/ST1/00169.

\section{Noncommutative spheres}

According to \cite{ba2}, \cite{bgo}, which were based on the previous work of Wang in \cite{wa1}, the free analogue of the complex unit sphere $S^{N-1}_\mathbb C$ is constructed as follows:

\begin{definition}
Associated to any $N\in\mathbb N$ is the universal $C^*$-algebra
$$C(S^{N-1}_{\mathbb C,+})=C^*\left(z_1,\ldots,z_N\Big|\sum_iz_iz_i^*=\sum_iz_i^*z_i=1\right)$$
whose abstract spectrum $S^{N-1}_{\mathbb C,+}$ is called free analogue of $S^{N-1}_\mathbb C$.
\end{definition}

Observe that the classical version of $S^{N-1}_{\mathbb C,+}$, obtained by assuming in addition that the standard coordinates $z_i$ and their adjoints $z_i^*$ commute, is the usual sphere $S^{N-1}_\mathbb C$. This follows indeed from the Stone-Weierstrass and Gelfand theorems. See \cite{ba2}, \cite{bgo}.

We will be interested in what follows in various half-liberated analogues of $S^{N-1}_\mathbb C$. We have the following constructions here, which go back to the work in \cite{ba2}:

\begin{definition}
We have the following subspheres of $S^{N-1}_{\mathbb C,+}$:
\begin{enumerate}
\item $S^{N-1}_{\mathbb C,*}$: obtained via the relations $ab^*c=cb^*a$, with $a,b,c\in\{z_i\}$.

\item $S^{N-1}_{\mathbb C,**}$: obtained via the relations $abc=cba$, with $a,b,c\in\{z_i,z_i^*\}$.

\item $S^{N-1}_{\mathbb C,\#}$: obtained via the relations $ab^*=ba^*,a^*b=b^*a$, with $a,b\in\{z_i\}$.

\item $S^{N-1}_{\mathbb C,\circ}$: obtained as an intersection, $S^{N-1}_{\mathbb C,\circ}=S^{N-1}_{\mathbb C,\#}\cap S^{N-1}_{\mathbb C,**}$.
\end{enumerate}
\end{definition}

Once again, we use here the general $C^*$-algebra philosophy, which allows us to define noncommutative compact subspaces $S^{N-1}_\times\subset S^{N-1}_{\mathbb C,+}$, by dividing the algebra $C(S^{N-1}_{\mathbb C,+})$ by various algebraic relations, and then by taking the abstract spectrum. See \cite{ba2}.

In addition to the above 4 noncommutative spheres, and to the sphere $S^{N-1}_\mathbb C$ itself, we have as well the following ``subsphere'' of $S^{N-1}_\mathbb C$, which is of interest for us: 
$$\mathbb TS^{N-1}_\mathbb R=\left\{(ux_1,\ldots,ux_N)\in S^{N-1}_\mathbb C\Big|u\in\mathbb T,(x_1,\ldots,x_N)\in S^{N-1}_\mathbb R\right\}$$

Here, and in what follows, $\mathbb T$ is the unit circle in the complex plane.

When adding the above new ``sphere'' to the 5 examples that we have so far, we obtain a set of objects which is stable by intersections, as follows:

\begin{proposition}
We have the following diagram, with all maps being inclusions:
$$\xymatrix@R=15mm@C=15mm{
S^{N-1}_\mathbb C\ar[r]&S^{N-1}_{\mathbb C,**}\ar[r]&S^{N-1}_{\mathbb C,*}\\
\mathbb TS^{N-1}_\mathbb R\ar[r]\ar[u]&S^{N-1}_{\mathbb C,\circ}\ar[u]\ar[r]&S^{N-1}_{\mathbb C,\#}\ar[u]}$$
In addition, this is an intersection diagram, in the sense that any intersection $X\cap Y$ appears on the diagram, as the biggest object contained in both $X,Y$.
\end{proposition}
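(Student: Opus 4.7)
The plan is to split the statement into two independent claims: first, that all arrows in the diagram are genuine inclusions, and second, that the resulting $2\times 3$ poset is closed under pairwise intersection. The first reduces entirely to manipulations of the defining algebraic relations, while the second requires exactly one geometric identification.

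For the inclusions, the rightward arrows along the top row and the vertical inclusion $S^{N-1}_\mathbb C\subset S^{N-1}_{\mathbb C,**}$ are immediate: commutativity trivially implies the half-commutation relation $abc=cba$ on $\{z_i,z_i^*\}$, and specializing the middle slot to $b=z_j^*$ yields the $*$-relation $ab^*c=cb^*a$. The two arrows out of $S^{N-1}_{\mathbb C,\circ}$ come for free from its definition as an intersection. The only mildly nontrivial inclusion is $S^{N-1}_{\mathbb C,\#}\subset S^{N-1}_{\mathbb C,*}$, which I will prove by iterating the two $\#$-relations:
\[
z_iz_j^*z_k=z_jz_i^*z_k=z_jz_k^*z_i=z_kz_j^*z_i.
\]
Finally, the inclusions $\mathbb TS^{N-1}_\mathbb R\subset S^{N-1}_\mathbb C$ and $\mathbb TS^{N-1}_\mathbb R\subset S^{N-1}_{\mathbb C,\circ}$ will be checked pointwise: writing $z_i=ux_i$ with $u\in\mathbb T$, $x_i\in\mathbb R$, the commutativity, $\#$, and $**$ relations all hold by direct substitution.

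For the intersection claim, I will use that when $X\subset Y$ one has $X\cap Y=X$, so in the $2\times 3$ poset most pairs are automatic. Among the incomparable pairs, $(S^{N-1}_{\mathbb C,**},S^{N-1}_{\mathbb C,\#})$ has intersection $S^{N-1}_{\mathbb C,\circ}$ by definition, and every other incomparable pair reduces, via the already-established inclusions, to computing the single key intersection $S^{N-1}_\mathbb C\cap S^{N-1}_{\mathbb C,\#}=\mathbb TS^{N-1}_\mathbb R$. The inclusion $\supset$ is part of the previous step; for $\subset$, I will argue pointwise via Gelfand, so that the $\#$-relation $z_i\bar z_j=z_j\bar z_i$ on the commutative sphere says that $z_i\bar z_j\in\mathbb R$ for all $i,j$. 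Choosing any $i_0$ with $z_{i_0}\neq 0$ and setting $u=z_{i_0}/|z_{i_0}|\in\mathbb T$, the identity $z_j\bar z_{i_0}\in\mathbb R$ then forces $u^{-1}z_j\in\mathbb R$ for every $j$, giving the desired factorization $z_j=ux_j$ with $(x_1,\ldots,x_N)\in S^{N-1}_\mathbb R$. The main obstacle I anticipate is exactly this pointwise identification; once it is in hand, the remaining intersection checks collapse to the inclusions already verified and the diagram closes automatically.
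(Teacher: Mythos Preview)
Your proposal is correct and follows essentially the same route as the paper: the inclusion $S^{N-1}_{\mathbb C,\#}\subset S^{N-1}_{\mathbb C,*}$ is obtained by the same three-step chain using the $\#$-relations, and the only nontrivial intersection is handled pointwise via the identity $z_i\bar z_j\in\mathbb R$, just as in the paper (which phrases this as $z_i/\bar z_i$ being independent of $i$ rather than extracting the phase of a fixed nonzero coordinate). The only cosmetic difference is that you compute $S^{N-1}_\mathbb C\cap S^{N-1}_{\mathbb C,\#}=\mathbb TS^{N-1}_\mathbb R$ directly, whereas the paper states the result for $S^{N-1}_\mathbb C\cap S^{N-1}_{\mathbb C,\circ}$, but the argument used is literally the same since only the $\#$-relations enter.
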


\begin{proof}
The upper horizontal inclusions are all clear. The lower horizontal inclusions are clear as well, with $\mathbb TS^{N-1}_\mathbb R\subset S^{N-1}_{\mathbb C,\circ}$ coming from the fact that the standard coordinates $z_i=ux_i$ on $\mathbb TS^{N-1}_\mathbb R$ satisfy the relations $ab^*=ba^*=a^*b=b^*a$. 

The two vertical inclusions on the left are clear. The remaining vertical inclusion, on the right, comes from the fact that, by using $ab^*=ba^*,a^*b=b^*a$, we obtain:
$$ab^*c=ba^*c=bc^*a=cb^*a$$

The intersection claim on the right is clear from the definition of $S^{N-1}_{\mathbb C,\circ}$. Regarding the intersection claim on the left, this states that $S^{N-1}_{\mathbb C,-}=S^{N-1}_\mathbb C\cap S^{N-1}_{\mathbb C,\circ}$ equals $\mathbb TS^{N-1}_\mathbb R$.

We have $S^{N-1}_{\mathbb C,-}\subset S^{N-1}_\mathbb C$, so consider a point $z\in S^{N-1}_{\mathbb C,-}$. Since we have $S^{N-1}_{\mathbb C,-}\subset S^{N-1}_{\mathbb C,\#}$, the coordinates of $z$ must satisfy the relations $ab^*=ba^*,a^*b=b^*a$, so we have $z_i\bar{z}_j=z_j\bar{z}_i$. In the case $z_i,z_j\neq0$ we obtain $z_i/\bar{z}_i=z_j/\bar{z}_j$, and we deduce that the numbers $z_i/\bar{z}_i$ are all equal, independently of the index $i$ satisfying $z_i\neq0$. Now by multiplying by a suitable scalar $u\in\mathbb T$, we can assume that we have $z_i/\bar{z}_i=1$, for any $i$ such that $z_i\neq0$. Thus, up to the multiplication by a scalar $u\in\mathbb T$, we have $z\in S^{N-1}_\mathbb R$, as desired.
\end{proof}

As already mentioned, the above 6 spheres were introduced in \cite{ba2}. In order to explain where these spheres come from, let us recall from \cite{ba2} that we have:

\begin{proposition}
We have the following intersection diagram,
$$\xymatrix@R=5mm@C=15mm{
S^{N-1}_{\mathbb R,+}\ar[rrr]&&&S^{N-1}_{\mathbb C,+}\\
&\ar@/^/[dr]&&\\
S^{N-1}_{\mathbb R,*}\ar[uu]\ar@/^/@{-}[ur]&S^{N-1}_\mathbb C\ar[r]&S^{N-1}_{\mathbb C,**}\ar[r]&S^{N-1}_{\mathbb C,*}\ar[uu]\\
\\
S^{N-1}_\mathbb R\ar[r]\ar[uu]&\mathbb TS^{N-1}_\mathbb R\ar[r]\ar[uu]&S^{N-1}_{\mathbb C,\circ}\ar[uu]\ar[r]&S^{N-1}_{\mathbb C,\#}\ar[uu]}$$
with the spheres on the left being the real versions ($z_i=z_i^*$) of the spheres on the right.
\end{proposition}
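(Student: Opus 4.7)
The plan is to verify all the inclusions in the diagram and then establish the intersection property by a case-by-case analysis of how the defining relations transform under the reality substitution $z_i=z_i^*$.

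For the inclusions, the right-hand subdiagram (with the six spheres $S^{N-1}_\mathbb{C},S^{N-1}_{\mathbb{C},**},S^{N-1}_{\mathbb{C},*},\mathbb TS^{N-1}_\mathbb R,S^{N-1}_{\mathbb C,\circ},S^{N-1}_{\mathbb C,\#}$) is handled by Proposition 1.3. The vertical arrows on the left-hand column ($S^{N-1}_\mathbb R\subset S^{N-1}_{\mathbb R,*}\subset S^{N-1}_{\mathbb R,+}$) are classical. Each horizontal arrow from a real sphere to its complex counterpart is, by construction, the inclusion given by adding the relations $z_i=z_i^*$. For the diagonal arc $S^{N-1}_{\mathbb R,*}\to S^{N-1}_{\mathbb C,**}$, one observes that the relations $abc=cba$ for $a,b,c\in\{z_i,z_i^*\}$ reduce, in the presence of $z_i=z_i^*$, to the relations $abc=cba$ for $a,b,c\in\{z_i\}$, which is exactly the definition of $S^{N-1}_{\mathbb R,*}$.

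For the intersection claim, the key observation is uniformly algebraic: the intersection of two noncommutative subspheres corresponds to the sum of the defining ideals, so one just has to check what adding $z_i=z_i^*$ does to each half-liberation relation. Running through the cases:
(i) $ab^*c=cb^*a$ collapses to $abc=cba$ when $b=b^*$, so $S^{N-1}_{\mathbb C,*}\cap S^{N-1}_{\mathbb R,+}=S^{N-1}_{\mathbb R,*}$;
(ii) $abc=cba$ on $\{z_i,z_i^*\}$ collapses to $abc=cba$ on $\{z_i\}$, so $S^{N-1}_{\mathbb C,**}\cap S^{N-1}_{\mathbb R,+}=S^{N-1}_{\mathbb R,*}$;
(iii) $ab^*=ba^*$ and $a^*b=b^*a$ both collapse to $ab=ba$, so $S^{N-1}_{\mathbb C,\#}\cap S^{N-1}_{\mathbb R,+}=S^{N-1}_\mathbb R$, and likewise $S^{N-1}_{\mathbb C,\circ}\cap S^{N-1}_{\mathbb R,+}=S^{N-1}_\mathbb R$;
(iv) $S^{N-1}_\mathbb C\cap S^{N-1}_{\mathbb R,+}=S^{N-1}_\mathbb R$ is immediate from the Gelfand picture. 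Finally, for the sphere $\mathbb TS^{N-1}_\mathbb R$ which is not defined by noncommutative relations, a direct point-set argument applies: a point $z=(ux_1,\ldots,ux_N)$ with $u\in\mathbb T$, $(x_i)\in S^{N-1}_\mathbb R$ lies in $S^{N-1}_\mathbb R$ iff every $ux_i$ is real, which forces $u=\pm1$ (since at least one $x_i\neq0$), giving $\mathbb TS^{N-1}_\mathbb R\cap S^{N-1}_{\mathbb R,+}=S^{N-1}_\mathbb R$.

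The main obstacle is rhetorical more than mathematical: to present all these pairwise intersections without drowning in cases. I would organize the verifications by row (noting that all intersections of the bottom-row complex spheres with $S^{N-1}_{\mathbb R,+}$ collapse to $S^{N-1}_\mathbb R$, and those in the middle row collapse to $S^{N-1}_{\mathbb R,*}$ or $S^{N-1}_\mathbb R$ as appropriate), relying on the fact that vertical intersections within a column are absorbed by Proposition 1.3, so that only the cross-column computations need to be done explicitly. Once these are in place, together with the fact that every one of the six spheres on the right already appears in the diagram of Proposition 1.3, the intersection diagram property follows.
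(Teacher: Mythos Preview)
Your proposal is correct and follows essentially the same approach as the paper: both arguments rest on checking how the half-liberation relations collapse under $z_i=z_i^*$ (in particular that $ab^*c=cb^*a$ becomes $abc=cba$, and that $ab^*=ba^*$, $a^*b=b^*a$ become $ab=ba$), and then invoking Proposition~1.3 for the remaining intersections. The paper's version is more compressed---it records only the two key collapses (real version of $S^{N-1}_{\mathbb C,\#}$ is $S^{N-1}_\mathbb R$, real version of $S^{N-1}_{\mathbb C,*}$ is $S^{N-1}_{\mathbb R,*}$, whence $S^{N-1}_{\mathbb R,*}\subset S^{N-1}_{\mathbb C,**}$) and then declares everything else clear from Proposition~1.3---whereas you spell out the intermediate cases (ii), (iv), and the point-set argument for $\mathbb TS^{N-1}_\mathbb R$ explicitly; these are redundant once the two key observations and Proposition~1.3 are in hand, but not incorrect.
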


\begin{proof}
Observe first that $S^{N-1}_\mathbb R$ is the real version of $S^{N-1}_{\mathbb C,\#}$, because when assuming that the coordinates are self-adjoint, the relations $ab^*=ba^*,a^*b=b^*a$ read $ab=ba$. 

Also, we have an inclusion $S^{N-1}_{\mathbb R,*}\subset S^{N-1}_{\mathbb C,**}$, because when taking the real version $S^{N-1}_{\mathbb R,*}$ of the sphere $S^{N-1}_{\mathbb C,*}$, the defining relations $ab^*c=cb^*a$ read $abc=cba$. 

With these observations in hand, the fact that we have the diagram in the statement, and that this is an intersection diagram, are clear from Proposition 1.3.
\end{proof}

The point now is that the above 10 spheres have a number of common features: 

\begin{proposition}
The above $10$ spheres appear from $S^{N-1}_{\mathbb C,+}$ via relations of type
$$z_{i_1}^{e_1}\ldots z_{i_k}^{e_k}=z_{i_\sigma(1)}^{d_1}\ldots z_{i_{\sigma(k)}}^{d_k},\forall i_1,\ldots,i_k$$
where $\sigma\in S_k$ is a permutation, and where $e_i,d_i\in\{1,*\}$ are exponents.
\end{proposition}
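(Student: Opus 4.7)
The plan is a direct case-by-case check: for each of the $10$ spheres listed in Proposition 1.4 I would exhibit an explicit family of relations, of the stated ``permuted monomial'' form, that defines the sphere as a quotient of $C(S^{N-1}_{\mathbb C,+})$. For the six spheres of the main diagram of Proposition 1.3 the verification is essentially tautological: $S^{N-1}_{\mathbb C,+}$ itself needs no relations; the four spheres from Definition 1.2 are defined by $abc=cba$, $ab^*c=cb^*a$, $ab^*=ba^*$, $a^*b=b^*a$, each manifestly of the stated form with $k\in\{2,3\}$; the classical sphere $S^{N-1}_\mathbb C$ is cut out by $ab=ba$ for $a,b\in\{z_i,z_i^*\}$, again of the stated form with $k=2$; and $S^{N-1}_{\mathbb C,\circ}$ is obtained simply by combining the relations defining $S^{N-1}_{\mathbb C,\#}$ and $S^{N-1}_{\mathbb C,**}$.

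For the three real spheres $S^{N-1}_{\mathbb R,+}$, $S^{N-1}_{\mathbb R,*}$, $S^{N-1}_\mathbb R$ of Proposition 1.4, one starts from the corresponding complex version and appends the self-adjointness relation $z_i=z_i^*$. This is a length-$1$ equation with $\sigma=\mathrm{id}\in S_1$ and exponents $(e_1,d_1)=(1,*)$, so it also fits into the allowed family.

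Finally, for the one remaining sphere $\mathbb TS^{N-1}_\mathbb R$ I would invoke Proposition 1.3, which identifies it with the intersection $S^{N-1}_\mathbb C\cap S^{N-1}_{\mathbb C,\circ}$. Taking intersections on the noncommutative-space side corresponds to imposing the union of defining relations on the $C^*$-algebra side, and both factors have already been handled; hence $\mathbb TS^{N-1}_\mathbb R$ is cut out by the commutation relations together with the $\circ$-relations, all of the stated type. Overall no real obstacle arises: the only non-bookkeeping step, namely the identification of $\mathbb TS^{N-1}_\mathbb R$ as an intersection of two spheres already on the list, has been established in Proposition 1.3, and the rest is an inspection of the defining relations of each sphere.
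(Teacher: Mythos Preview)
Your proposal is correct and follows essentially the same approach as the paper: a direct case-by-case check that each of the ten spheres is cut out by relations of the stated permuted-monomial type (the paper simply lists the relevant relations $a=a^*$, $ab=ba$, $ab^*=b^*a$, $ab^*=ba^*$, $a^*b=b^*a$, $abc=cba$, $abc^*=c^*ba$, $ab^*c=cb^*a$ and observes they all fit the template). One small slip to fix: $S^{N-1}_{\mathbb C,+}$ is not one of the six spheres in the diagram of Proposition~1.3 --- it appears only in the larger diagram of Proposition~1.4 --- so your opening sentence should refer to the latter, but this does not affect the argument.
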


\begin{proof}
The 10 spheres appear indeed from $S^{N-1}_{\mathbb C,+}$ via the following relations: 
$$a=a^*,ab=ba,ab^*=b^*a,ab^*=ba^*,a^*b=b^*a$$
$$abc=cba,\ abc^*=c^*ba,\ ab^*c=cb^*a$$

Now since all these relations are as in the statement, this proves the result.
\end{proof}

As explained in \cite{ba2}, the formalism in Proposition 1.5 is in fact too wide. The solution proposed in \cite{ba2} is that of starting with $S^{N-1}_\mathbb R\subset S^{N-1}_{\mathbb R,*}\subset S^{N-1}_{\mathbb R,+}$, which are conjecturally the only real examples, and then by performing 3 operations:
\begin{enumerate}
\item Mirroring: this produces the spheres $S^{N-1}_\mathbb C\subset S^{N-1}_{\mathbb C,**}\subset S^{N-1}_{\mathbb C,+}$.

\item Free complexification: this produces the extra spheres $S^{N-1}_{\mathbb C,\#}\subset S^{N-1}_{\mathbb C,*}$.

\item Taking intersections: this produces the remaining spheres $\mathbb TS^{N-1}_\mathbb R\subset S^{N-1}_{\mathbb C,\circ}$.
\end{enumerate}

Summarizing, the above 10 spheres are expected to be the ``only ones'', under some strong axioms, which are however not available yet. See \cite{ba2}.

Let us try now to better understand the half-liberated spheres. Given $S^{N-1}_\times\subset S^{N-1}_{\mathbb C,+}$, the associated projective space is the quotient $S^{N-1}_\times\to P^N_\times$ given by the fact that $C(P^N_\times)\subset C(S^{N-1}_\times)$ is the subalgebra generated by the variables $p_{ij}=z_iz_j^*$. We have then:

\begin{theorem}
The projective spaces for the $6$ half-liberated spheres are
$$\xymatrix@R=15mm@C=17mm{
P^N_\mathbb C\ar@{=}[r]&P^N_\mathbb C\ar@{=}[r]&P^N_\mathbb C\\
P^N_\mathbb R\ar@{=}[r]\ar[u]&P^N_\mathbb R\ar[u]\ar@{=}[r]&P^N_\mathbb R\ar[u]}$$
where $P^N_\mathbb R,P^N_\mathbb C$ are the usual real and complex projective spaces.
\end{theorem}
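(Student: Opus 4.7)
The projective algebra $C(P^N_\times)$ is by definition the subalgebra of $C(S^{N-1}_\times)$ generated by the products $p_{ij}=z_iz_j^*$. The plan is to show that in all six cases these generators satisfy the relations defining the classical algebras $C(P^N_\mathbb C)$ (top row) or $C(P^N_\mathbb R)$ (bottom row), giving surjections from the universal algebras. For the reverse surjections, one composes with the quotient onto the smallest sphere of each row --- namely $S^{N-1}_\mathbb C$ for the top row and $\mathbb TS^{N-1}_\mathbb R$ for the bottom row --- whose projective algebras are classically known to be $C(P^N_\mathbb C)$ and $C(P^N_\mathbb R)$ respectively. Since the two surjections act as the identity on the generators $p_{ij}$, this will force isomorphism in each corner.

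The main computation is the commutativity of the $p_{ij}$ in the top row. For $S^{N-1}_\mathbb C$ this is automatic. For $S^{N-1}_{\mathbb C,**}$, one applies $abc=cba$ twice to the word $z_iz_j^*z_kz_l^*$, first to move $z_i$ past $z_k$ and then to move $z_j^*$ past $z_l^*$:
$$z_iz_j^*z_kz_l^*=z_kz_j^*z_iz_l^*=z_kz_l^*z_iz_j^*.$$
For $S^{N-1}_{\mathbb C,*}$ one uses the relation $ab^*c=cb^*a$ together with its adjoint $c^*ba^*=a^*bc^*$:
$$z_iz_j^*z_kz_l^*=z_k(z_j^*z_iz_l^*)=z_k(z_l^*z_iz_j^*)=z_kz_l^*z_iz_j^*.$$
This is the most delicate step, because the pattern of exponents in the three-letter subwords must match the relation being applied; no two adjacent letters commute in general, only certain triples.

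For the bottom row, commutativity of the $p_{ij}$ is inherited from the top row via the vertical inclusions of Proposition 1.3. The extra ingredient is the symmetry $p_{ij}=p_{ji}$: for $\mathbb TS^{N-1}_\mathbb R$ this is immediate from $p_{ij}=ux_i\overline{ux_j}=x_ix_j$; for $S^{N-1}_{\mathbb C,\#}$ it is the relation $ab^*=ba^*$ itself; for $S^{N-1}_{\mathbb C,\circ}\subset S^{N-1}_{\mathbb C,\#}$ it is inherited from the ambient sphere.

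Finally, in every case the $p_{ij}$ satisfy $\sum_ip_{ii}=1$ and $\sum_kp_{ik}p_{kj}=p_{ij}$, the latter because $\sum_kz_k^*z_k=1$ is a scalar and hence can be commuted past any $z_i$ or $z_j^*$. Together with commutativity and the tautological $p_{ij}^*=p_{ji}$, these are precisely the defining relations of $C(P^N_\mathbb C)$; adding $p_{ij}=p_{ji}$ for the bottom row specialises them to $C(P^N_\mathbb R)$. I expect the main obstacle to be the commutativity verification for $S^{N-1}_{\mathbb C,*}$, and, to a lesser extent, the justification that the projective algebra of $\mathbb TS^{N-1}_\mathbb R$ really is $C(P^N_\mathbb R)$, which follows from the identification $\mathbb TS^{N-1}_\mathbb R/\mathbb T=S^{N-1}_\mathbb R/\{\pm1\}=P^N_\mathbb R$ together with the $\mathbb T$-invariance of each $p_{ij}$.
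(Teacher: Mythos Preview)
Your proof is correct and follows essentially the same route as the paper's. The only difference is organisational: the paper uses functoriality of the projective construction to reduce immediately to the two rightmost spheres $S^{N-1}_{\mathbb C,*}$ and $S^{N-1}_{\mathbb C,\#}$, so that the separate computation you give for $S^{N-1}_{\mathbb C,**}$ is subsumed by the one for $S^{N-1}_{\mathbb C,*}$; your commutativity computation $ab^*cd^*=cb^*ad^*=cd^*ab^*$ for $S^{N-1}_{\mathbb C,*}$ and the symmetry argument $ab^*=ba^*$ for $S^{N-1}_{\mathbb C,\#}$ are identical to those in the paper.
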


\begin{proof}
We use the following presentation results, coming from the Gelfand theorem:
\begin{eqnarray*}
C(P^N_\mathbb R)&=&C^*_{comm}\left((p_{ij})_{i,j=1,\ldots,N}\Big|p=p^t=p^*=p^2,Tr(p)=1\right)\\
C(P^N_\mathbb C)&=&C^*_{comm}\left((p_{ij})_{i,j=1,\ldots,N}\Big|p=p^*=p^2,Tr(p)=1\right)
\end{eqnarray*}

By functoriality, the projective spaces for our 6 spheres are as follows:
$$\xymatrix@R=15mm@C=17mm{
P^N_\mathbb C\ar[r]&P^N_{\mathbb C,**}\ar[r]&P^N_{\mathbb C,*}\\
P^N_\mathbb R\ar[r]\ar[u]&P^N_{\mathbb C,\circ}\ar[u]\ar[r]&P^N_{\mathbb C,\#}\ar[u]}$$

In order to finish, it is enough to prove that we have $P^N_{\mathbb C,*}\subset P^N_\mathbb C$, $P^N_{\mathbb C,\#}\subset P^N_\mathbb R$.

\underline{$P^N_{\mathbb C,*}\subset P^N_\mathbb C$}. From $ab^*c=cb^*a$ we obtain $ab^*cd^*=cb^*ad^*=cd^*ab^*$, so the variables $p_{ij}=z_iz_j^*$ commute. In addition we have $p=p^*=p^2,Tr(p)=1$, and we are done.

\underline{$P^N_{\mathbb C,\#}\subset P^N_\mathbb R$}. From $ab^*=ba^*$ we deduce that the matrix $p_{ij}=z_iz_j^*$ is symmetric, and so $P^N_{\mathbb C,\#}\subset P^N_{\mathbb C,*}=P^N_\mathbb C$ follows to be a subspace of $P^N_\mathbb R$, as desired.
\end{proof}

We should mention that the above result has an extension to the 10-sphere framework of Proposition 1.4, with the 3 rows of spheres corresponding to the 3 types of projective spaces (real, complex, free). Indeed, we have $P^N_{\mathbb R,*}=P^N_\mathbb C$, and the inclusion $P^N_{\mathbb R,+}\subset P^N_{\mathbb C,+}$ is known to be an isomorphism at the level of reduced versions. See \cite{ba2}, \cite{bgo}.

\section{Matrix models}

We further advance now on the understanding of the 6 half-liberated spheres.

Given a subspace $X\subset S^{N-1}_{\mathbb C,+}$, we can consider the subalgebra $C(\widetilde{X})\subset C(\mathbb T)*C(X)$ generated by the elements $w_i=uz_i$, where $u\in C(\mathbb T)$ is the standard generator.  Since we have $\sum_iw_iw_i^*=\sum_iw_i^*w_i=1$, we obtain in this way a closed subspace $\widetilde{X}\subset S^{N-1}_{\mathbb C,+}$, called free complexification of $X$. See \cite{ba1}, \cite{rau}. With this notion in hand, we have:

\begin{proposition}
We have inclusions and equalities as follows,
$$\xymatrix@R=15mm@C=15mm{
\widetilde{S}^{N-1}_\mathbb C\ar[r]&\widetilde{S}^{N-1}_{\mathbb C,*}\ar@{=}[r]&S^{N-1}_{\mathbb C,*}\\
\widetilde{S}^{N-1}_\mathbb R\ar[r]\ar[u]&\widetilde{S}^{N-1}_{\mathbb C,\#}\ar[u]\ar@{=}[r]&S^{N-1}_{\mathbb C,\#}\ar[u]}$$
making correspond standard coordinates to standard coordinates.
\end{proposition}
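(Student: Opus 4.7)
The plan is to establish four inclusions and two equalities. For the inclusions, I would invoke the functoriality of the free complexification operation. Given $X\subset Y$, the corresponding surjection $C(Y)\twoheadrightarrow C(X)$ extends by the universal property of the free product to a surjection $C(\mathbb T)*C(Y)\twoheadrightarrow C(\mathbb T)*C(X)$, sending $uy_i\mapsto ux_i$, which restricts to a surjection $C(\widetilde Y)\twoheadrightarrow C(\widetilde X)$ between the subalgebras generated by those elements. Each of the four arrows in the diagram then follows from the corresponding inclusion among $S^{N-1}_\mathbb R\subset S^{N-1}_\mathbb C$, $S^{N-1}_\mathbb R\subset S^{N-1}_{\mathbb C,\#}$, $S^{N-1}_\mathbb C\subset S^{N-1}_{\mathbb C,*}$ and $S^{N-1}_{\mathbb C,\#}\subset S^{N-1}_{\mathbb C,*}$ supplied by Propositions 1.3 and 1.4.

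For the two equalities $\widetilde{S}^{N-1}_{\mathbb C,*}=S^{N-1}_{\mathbb C,*}$ and $\widetilde{S}^{N-1}_{\mathbb C,\#}=S^{N-1}_{\mathbb C,\#}$, I would construct $*$-homomorphisms in both directions, matching standard coordinates, and check that they are mutually inverse. In one direction, using the cancellations $u^*u=uu^*=1$ inside the free product, I would verify
$$w_iw_j^*w_k=uz_iz_j^*z_k,\qquad w_iw_j^*=uz_iz_j^*u^*,$$
so that the $w_i$'s inherit the $*$-relation $ab^*c=cb^*a$ (respectively the $\#$-relations $ab^*=ba^*$, $a^*b=b^*a$) whenever the $z_i$'s satisfy them. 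By the universal properties of the spheres, this produces in each case a surjection $\pi$ sending $z_i\mapsto w_i$.

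For the reverse direction, I would construct a one-sided inverse by exploiting the free-product structure: the counit-like map $\varepsilon:C(\mathbb T)*C(X)\to C(X)$ defined by $u\mapsto 1$, $z_i\mapsto z_i$ exists by the universal property of the free product, and its restriction to the subalgebra $C(\widetilde X)$ sends $w_i\mapsto z_i$, which splits $\pi$. Hence $\pi$ is an isomorphism identifying standard coordinates, as required.

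The main obstacle is really the relation-inheritance step above. The argument succeeds for the right column precisely because the $*$- and $\#$-relations are ``balanced'' in a way that lets the free $u$-factors telescope via $u^*u=1$ into a single leftover $u$, or into a sandwich $u(\cdots)u^*$. The analogous manipulation for $abc=cba$ with $a,b,c\in\{z_i,z_i^*\}$ would leave three unmatched $u$-factors and is not available, which is consistent with the diagram asserting equalities only on the right-hand column.
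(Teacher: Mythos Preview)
Your proposal is correct and follows essentially the same approach as the paper: functoriality of free complexification for the four inclusions, a direct verification that the $w_i=uz_i$ inherit the $*$- and $\#$-relations (via the same telescoping of $u^*u$ you describe), and the counit map $u\mapsto 1$ on the free product to provide the splitting in the reverse direction. Your closing remark about why the $abc=cba$ relations fail to transfer is a nice addition not spelled out in the paper.
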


\begin{proof}
Consider the diagram in Proposition 1.3, with $\mathbb TS^{N-1}_\mathbb R$ replaced by $S^{N-1}_\mathbb R$. By functoriality, we have inclusions as follows:
$$\xymatrix@R=14mm@C=14mm{
\widetilde{S}^{N-1}_\mathbb C\ar[r]&\widetilde{S}^{N-1}_{\mathbb C,**}\ar[r]&\widetilde{S}^{N-1}_{\mathbb C,*}\\
\widetilde{S}^{N-1}_\mathbb R\ar[r]\ar[u]&\widetilde{S}^{N-1}_{\mathbb C,\circ}\ar[u]\ar[r]&\widetilde{S}^{N-1}_{\mathbb C,\#}\ar[u]}$$

Thus we have the square on the left in the statement. In order to prove now the isomorphisms on the right, consider the space $\widetilde{S}^{N-1}_{\mathbb C,*}$, with coordinates $w_i=uz_i$. We have:
$$w_iw_j^*w_k=uz_iz_j^*z_k=uz_kz_j^*z_i=w_kw_j^*w_i$$

Thus we have $\widetilde{S}^{N-1}_{\mathbb C,*}\subset S^{N-1}_{\mathbb C,*}$. As for the converse inclusion, this follows by using the following composition, with $\varepsilon*id$ on the right, where $\varepsilon:C(\mathbb T)\to\mathbb C$ is the counit:
$$C(\widetilde{S}^{N-1}_{\mathbb C,*})\subset C(\mathbb T)*C(S^{N-1}_{\mathbb C,*})\to C(S^{N-1}_{\mathbb C,*})$$

In order to establish now the lower right isomorphism, consider the space $\widetilde{S}^{N-1}_{\mathbb C,\#}$, with coordinates $w_i=uz_i$. We have then $\widetilde{S}^{N-1}_{\mathbb C,\#}\subset S^{N-1}_{\mathbb C,\#}$, because:
$$w_iw_j^*=ux_i\cdot x_j^*u^*=ux_j\cdot x_i^*u^*=w_jw_i^*$$
$$w_i^*w_j=x_i^*u^*\cdot ux_j=x_j^*u^*\cdot ux_i=w_j^*w_i$$

As for the converse inclusion, this follows by using the counit, as before.
\end{proof}

Regarding now $S^{N-1}_{\mathbb C,**},S^{N-1}_{\mathbb C,\circ}$, we can use here some $2\times2$ matrix tricks, inspired from \cite{bdu}. Given a closed subspace $X\subset S^{N-1}_{\mathbb C,+}$, with coordinates denoted $z_i$, we can consider the subalgebra $C(|X|)\subset M_2(C(X))$ generated by the following elements:
$$z_i'=\begin{pmatrix}0&z_i\\ z_i^*&0\end{pmatrix}$$

Since these elements are self-adjoint, and their squares sum up to 1, we have $|X|\subset S^{N-1}_{\mathbb R,+}$. We call this space $|X|$ doubling of $X$. We have then the following result:

\begin{proposition}
We have inclusions and equalities as follows, 
$$\xymatrix@R=15mm@C=15mm{
S^{N-1}_{\mathbb R,*}\ar@{=}[r]&S^{N-1}_{\mathbb R,*}\ar[r]&S^{N-1}_{\mathbb R,+}\\
|S^{N-1}_\mathbb C|\ar[r]\ar[u]&|S^{N-1}_{\mathbb C,*}|\ar[u]\ar[r]&|S^{N-1}_{\mathbb C,+}|\ar[u]}$$
mapping the standard coordinates to the standard coordinates.
\end{proposition}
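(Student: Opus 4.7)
My plan is to mimic the proof of Proposition 2.1 by reducing everything to a single $2\times 2$ block computation, and then deducing the three vertical inclusions and the horizontal ones all from functoriality of the doubling operation $X\mapsto |X|$.

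First, for an arbitrary closed $X\subset S^{N-1}_{\mathbb C,+}$, I would compute the relevant products of the anti-diagonal generators $z_i'$ directly:
$$z_i'z_j'=\begin{pmatrix}z_iz_j^*&0\\ 0&z_i^*z_j\end{pmatrix},\qquad z_i'z_j'z_k'=\begin{pmatrix}0&z_iz_j^*z_k\\ z_i^*z_jz_k^*&0\end{pmatrix}.$$
The first formula, combined with the sphere relations $\sum_iz_iz_i^*=\sum_iz_i^*z_i=1$, yields $\sum_i(z_i')^2=1$, and together with $z_i'=(z_i')^*$ this proves the rightmost vertical inclusion $|S^{N-1}_{\mathbb C,+}|\subset S^{N-1}_{\mathbb R,+}$ tautologically.

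Second, to obtain the middle vertical inclusion $|S^{N-1}_{\mathbb C,*}|\subset S^{N-1}_{\mathbb R,*}$, I would verify the half-commutation relation $z_i'z_j'z_k'=z_k'z_j'z_i'$ on the doubled sphere. By the second matrix formula above, this reduces to the two identities $z_iz_j^*z_k=z_kz_j^*z_i$ and $z_i^*z_jz_k^*=z_k^*z_jz_i^*$; the first is the defining relation of $S^{N-1}_{\mathbb C,*}$, and the second is its adjoint, so both hold. The leftmost vertical inclusion $|S^{N-1}_\mathbb C|\subset S^{N-1}_{\mathbb R,*}$ is then automatic by the same argument, since the commutation relations on $S^{N-1}_\mathbb C$ are strictly stronger than the $ab^*c=cb^*a$ relations.

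Third, the horizontal inclusions $|S^{N-1}_\mathbb C|\subset|S^{N-1}_{\mathbb C,*}|\subset|S^{N-1}_{\mathbb C,+}|$ come from functoriality of doubling: an inclusion $X\subset Y$ gives a surjection $C(Y)\twoheadrightarrow C(X)$, and applying $M_2(\,\cdot\,)$ and restricting to the subalgebras generated by the $z_i'$ produces a surjection $C(|Y|)\twoheadrightarrow C(|X|)$, i.e.\ $|X|\subset|Y|$. The equality marked on top of the diagram is just the tautological identity.

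Honestly, no step here is a genuine obstacle: the entire content is the block-matrix computation, after which each inclusion reduces to checking that a defining relation pulls back to a relation available on the base sphere. The only place where real work could be hidden is if one wanted to upgrade the two left vertical arrows to equalities (in the spirit of the right-column equalities of Proposition~2.1); that would require the faithfulness of the standard $2\times 2$ matrix model for $S^{N-1}_{\mathbb R,*}$, which is the nontrivial input from \cite{bdu}.
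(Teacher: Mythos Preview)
Your proof is correct and follows essentially the same approach as the paper: the paper also reduces everything to the explicit computation of the triple product $z_i'z_j'z_k'$ as a $2\times 2$ anti-diagonal matrix, reads off the half-commutation relation from the defining relations of $S^{N-1}_{\mathbb C,*}$, and obtains the leftmost vertical inclusion by restriction. Your additional remarks on functoriality for the horizontal arrows and on what would be needed to upgrade the left vertical inclusions to equalities are accurate side comments that go slightly beyond what the paper spells out.
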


\begin{proof}
The inclusion on the right appears as the particular case $X=S^{N-1}_{\mathbb C,+}$ of the inclusion $|X|\subset S^{N-1}_{\mathbb R,+}$ constructed above. Regarding now the middle inclusion, we have:
$$z_i'z_j'z_k'=\begin{pmatrix}0&z_i\\ z_i^*&0\end{pmatrix}\begin{pmatrix}0&z_j\\ z_j^*&0\end{pmatrix}\begin{pmatrix}0&z_k\\ z_k^*&0\end{pmatrix}=\begin{pmatrix}0&z_iz_j^*z_k\\ z_i^*z_jz_k^*&0\end{pmatrix}$$

Now by assuming that the elements $z_i$ are the standard coordinates of $S^{N-1}_{\mathbb C,*}$, we conclude that we have $z_i'z_j'z_k'=z_k'z_j'z_i'$, and this gives the middle inclusion. Finally, the inclusion on the left follows by restricting the inclusion in the middle.
\end{proof}

In order to extend the above notions to the complex case, we begin with a technical result, regarding the relation between the real and the complex spheres. 

We denote by $x_i$ the coordinates on the real spheres. In the odd-dimensional case, we can split half-half the coordinates, and denote them $x_i,y_i$. We have then:

\begin{proposition}
We have the following diagram, given by $z_i=x_i+iy_i$,
$$\xymatrix@R=10mm@C=15mm{
S^{2N-1}_\mathbb R\ar[r]&S^{2N-1}_{\mathbb R,*}\ar[r]&S^{2N-1}_{\mathbb R,+}\\
\dot{S}^{2N-1}_\mathbb R\ar[r]\ar@{=}[u]&\dot{S}^{2N-1}_{\mathbb R,*}\ar[r]\ar[u]&\dot{S}^{2N-1}_{\mathbb R,+}\ar[u]\\
S^{N-1}_\mathbb C\ar[r]\ar@{=}[u]&S^{N-1}_{\mathbb C,**}\ar[r]\ar@{=}[u]&S^{N-1}_{\mathbb C,+}\ar@{=}[u]
}$$
where each $\dot{S}^{2N-1}_{\mathbb R,\times}\subset S^{2N-1}_{\mathbb R,\times}$ is obtained via the relations $\sum_i[x_i,y_i]=0$.
\end{proposition}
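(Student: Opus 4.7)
The plan is to establish the vertical equalities in the diagram by means of the mutually inverse substitutions $z_i = x_i + iy_i$ on one side and $x_i = (z_i+z_i^*)/2$, $y_i = (z_i-z_i^*)/(2i)$ on the other, with the horizontal inclusions and the vertical $\dot{S} \subset S$ inclusions being tautological at the level of universal $C^*$-algebras. The real content is to match the defining relations in each column, and to check the top-left classical collapse $S^{2N-1}_\mathbb R = \dot{S}^{2N-1}_\mathbb R$.

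The crucial computation is the right-hand column. With $x_i, y_i$ self-adjoint, one finds
$$z_iz_i^* = x_i^2+y_i^2-i[x_i,y_i],\qquad z_i^*z_i = x_i^2+y_i^2+i[x_i,y_i].$$
Summing over $i$, the pair of complex sphere relations $\sum_i z_iz_i^* = \sum_i z_i^*z_i = 1$ is seen to be equivalent to the pair of real conditions $\sum_i(x_i^2+y_i^2)=1$ and $\sum_i[x_i,y_i]=0$. The first of these defines $S^{2N-1}_{\mathbb R,+}$ and the second carves out the dotted subsphere inside it, yielding $\dot{S}^{2N-1}_{\mathbb R,+} = S^{N-1}_{\mathbb C,+}$. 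The top-left equality $S^{2N-1}_\mathbb R = \dot{S}^{2N-1}_\mathbb R$ is then automatic, since classical generators commute and the commutator constraint is vacuous, and the bottom-left equality $\dot{S}^{2N-1}_\mathbb R = S^{N-1}_\mathbb C$ is the familiar classical real-to-complex dictionary.

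For the middle column $\dot{S}^{2N-1}_{\mathbb R,*} = S^{N-1}_{\mathbb C,**}$, the key observation is that the half-commutation relation $abc=cba$ is trilinear in $(a,b,c)$. Consequently, if it holds for all triples drawn from a set $T$, it holds for all triples drawn from the $\mathbb C$-linear span of $T$. Since $\{z_i, z_i^*\}$ and $\{x_i, y_i\}$ span the same complex subspace of the universal $*$-algebra, the $S^{N-1}_{\mathbb C,**}$ half-commutation relations and the $S^{2N-1}_{\mathbb R,*}$ ones determine each other under the substitution, which together with the sphere identification of the previous paragraph yields the middle equality. The main obstacle, such as it is, lies precisely in this trilinearity step: one must verify that no stray relations are introduced or lost when passing between the two sets of generators, and that the universal property of each side really accommodates the other's defining relations. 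Once that is clear, the remaining content of the diagram, including the middle-row horizontal inclusions and the commutation of all squares, reduces to bookkeeping.
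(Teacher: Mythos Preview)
Your argument is correct and follows the same overall scheme as the paper for the right and left columns: the equivalence $\sum_i z_iz_i^* = \sum_i z_i^*z_i = 1 \iff \sum_i(x_i^2+y_i^2)=1,\ \sum_i[x_i,y_i]=0$ is exactly what the paper computes, and the classical collapse $\dot{S}^{2N-1}_\mathbb R = S^{2N-1}_\mathbb R$ is handled identically.

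Where you diverge is the middle column. The paper treats the equivalence of the two half-commutation conditions asymmetrically: the implication from $\{x_i,y_i\}$ half-commuting to $\{z_i,z_i^*\}$ half-commuting is declared clear, while the converse is obtained by expanding the relations $(x+\alpha y)(z+\beta t)(u+\gamma v)=(u+\gamma v)(z+\beta t)(x+\alpha y)$ for all sign choices $\alpha,\beta,\gamma\in\{\pm i\}$, separating real and imaginary parts, and checking that the resulting linear system has full rank via a $4\times 4$ determinant. Your trilinearity observation --- that $(a,b,c)\mapsto abc-cba$ is $\mathbb C$-trilinear, so vanishing on $T\times T\times T$ forces vanishing on $(\mathrm{span}_{\mathbb C}T)^3$, together with the fact that $\{x_i,y_i\}$ and $\{z_i,z_i^*\}$ have the same complex span --- bypasses that computation entirely and treats both directions on equal footing. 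This is genuinely cleaner; the paper's route has the virtue of being fully explicit, but your span argument is sound and shorter.
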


\begin{proof}
The composition on the left corresponds to the isomorphism $S^{N-1}_\mathbb C=S^{2N-1}_\mathbb R$ given by $z_i=x_i+iy_i$. Observe that we have indeed $\dot{S}^{2N-1}_\mathbb R=S^{2N-1}_\mathbb R$, by commutativity.

We construct now the maps on the right. With $z=x+iy$ we have:
\begin{eqnarray*}
zz^*&=&(x+iy)(x-iy)=x^2+y^2-i[x,y]\\
z^*z&=&(x-iy)(x+iy)=x^2+y^2+i[x,y]
\end{eqnarray*}

Thus, with $z_i=x_i+iy_i$, we have the following formulae:
\begin{eqnarray*}
\sum_iz_iz_i^*&=&\sum_i(x_i^2+y_i^2)-i\sum_i[x_i,y_i]\\
\sum_iz_i^*z_i&=&\sum_i(x_i^2+y_i^2)+i\sum_i[x_i,y_i]
\end{eqnarray*}

We conclude that we have the following equivalence:
$$\sum_iz_iz_i^*=\sum_iz_i^*z_i=1\iff \sum_ix_i^2+y_i^2=1,\sum_i[x_i,y_i]=0$$

But this gives a quotient map $C(S^{2N-1}_{\mathbb R,+})\to C(S^{N-1}_{\mathbb C,+})$, given by $x_i=Re(z_i),y_i=Im(z_i)$, and this map factorizes as $C(S^{2N-1}_{\mathbb R,+})\to C(\dot{S}^{2N-1}_{\mathbb R,+})=C(S^{N-1}_{\mathbb C,+})$, as desired.

Regarding now the middle maps, we must show that, with $z_i=x_i+iy_i$, we have:
$$\Big\{x_i,y_i\ {\rm half-commute}\Big\}\iff\Big\{z_i,z_i^*\ {\rm half-commute}\Big\}$$

The ``$\implies$'' assertion being clear, let us discuss now the ``$\Longleftarrow$'' assertion. Here the half-commutation relations $abc=cba$ with $a,b,c\in\{z_i,z_i^*\}$ can be written as follows, in terms of $a=x+iy,b=z+it,c=u+iv$, with $x,y,z,t,u,v$ self-adjoint:
$$(x+\alpha y)(z+\beta t)(u+\gamma v)=(u+\gamma v)(z+\beta t)(x+\alpha y)\quad\forall\alpha,\beta,\gamma\in\{i,-i\}$$

Now by looking at the real and imaginary parts, we obtain the following system of equations, once again valid for any choice of $\alpha,\beta,\gamma\in\{i,-i\}$:
$$\begin{cases}
(xzu-uzx)+\alpha\beta(ytu-uty)+\beta\gamma(xtv-vtx)+\alpha\gamma(yzv-vzy)=0\\
\alpha(yzu-uzy)+\beta(xtu-utx)+\gamma(xzv-vzx)+\alpha\beta\gamma(ytv-vty)=0
\end{cases}$$

From the 8 possible choices of $\alpha,\beta,\gamma\in\{i,-i\}$, we select now the 4 ones having at most one $-i$ among $\alpha,\beta,\gamma$. The corresponding $4\times4$ determinants being both nonzero, we conclude that the global system, formed by the above $2\times 8=16$ equations, is equivalent to the vanishing of all 8 quantities of type $xzu-uzx$, and we are done.
\end{proof}

Let us go back now to the question of finding a complex analogue of Proposition 2.2. Given a closed subspace $X\subset S^{2N-1}_{\mathbb C,+}$, with coordinates denoted $x_i,y_i$, we can consider the subalgebra $C([X])\subset M_2(C(X))$ generated by the following elements:
$$z_i=\begin{pmatrix}0&x_i\\ x_i^*&0\end{pmatrix}+i\begin{pmatrix}0&y_i\\ y_i^*&0\end{pmatrix}$$

We call this space $[X]$ complex doubling of $X$. Observe that we do not have in general $[X]\subset S^{N-1}_{\mathbb C,+}$, because the formulae $\sum_iz_iz_i^*=\sum_iz_i^*z_i=1$ are not satisfied.

In relation now with $S^{N-1}_{\mathbb C,**},S^{N-1}_{\mathbb C,\circ}$, let us introduce the following manifolds:
\begin{eqnarray*}
\dot{S}^{2N-1}_\mathbb C&=&\left\{(x,y)\in S^{2N-1}_\mathbb C\Big|\sum_ix_i\bar{y}_i\in\mathbb R\right\}\\
\ddot{S}^{2N-1}_\mathbb C&=&\left\{(x,y)\in\dot{S}^{2N-1}_\mathbb C\Big|x_i\bar{x}_j+y_i\bar{y}_j\in\mathbb R,x_i\bar{y}_j-y_i\bar{x}_j\in i\mathbb R\right\}
\end{eqnarray*}

Consider as well the manifold $\mathbb T^2S^{N-1}_\mathbb R\subset S^{2N-1}_\mathbb C$ consisting of the points of the form $u(\lambda p,\mu p)$, with $u\in\mathbb T$, $(\lambda,\mu)\in S^1_\mathbb R\simeq\mathbb T$, and $p\in S^{N-1}_\mathbb R$. We have then:

\begin{theorem}
We have inclusions of noncommutative spaces as follows, 
$$\xymatrix@R=15mm@C=15mm{
[\mathbb TS^{2N-1}_\mathbb R]\ar[r]&[\dot{S}^{2N-1}_\mathbb C]\\
[\mathbb T^2S^{N-1}_\mathbb R]\ar[u]\ar[r]&[\ddot{S}^{2N-1}_\mathbb C]\ar[u]}\qquad
\xymatrix@R=8mm@C=15mm{\\ \longrightarrow\\}\qquad
\xymatrix@R=15mm@C=15mm{
S^{N-1}_\mathbb C\ar[r]&S^{N-1}_{\mathbb C,**}\\
\mathbb TS^{N-1}_\mathbb R\ar[u]\ar[r]&S^{N-1}_{\mathbb C,\circ}\ar[u]}$$
mapping the standard coordinates to the standard coordinates.
\end{theorem}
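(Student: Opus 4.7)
My plan is to verify each of the four candidate inclusions $[X]\subset Y$ in the statement by showing that the doubled coordinates $z_i\in M_2(C(X))$ satisfy the defining relations of $C(Y)$; this produces a quotient map $C(Y)\twoheadrightarrow C([X])$ identifying standard coordinates with standard coordinates. Once the four main inclusions are established, the internal arrows of the left diagram follow from the functoriality of the doubling $X\mapsto [X]$, applied to the easily checked classical inclusions $\mathbb T^2S^{N-1}_\mathbb R\subset\mathbb TS^{2N-1}_\mathbb R\subset\dot S^{2N-1}_\mathbb C$ and $\mathbb T^2S^{N-1}_\mathbb R\subset\ddot S^{2N-1}_\mathbb C\subset\dot S^{2N-1}_\mathbb C$.

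The first step is the sphere relation. A direct $2\times 2$ computation shows that $\sum_iz_iz_i^*$ and $\sum_iz_i^*z_i$ are diagonal, with diagonal entries of the form $\sum_i(x_ix_i^*+y_iy_i^*)\pm i\sum_i(x_iy_i^*-y_ix_i^*)$ or its adjoint. In the classical setting the norm part is $1$ by the defining relations of $S^{2N-1}_\mathbb C$, while the imaginary piece collapses to $2i\,\mathrm{Im}(\sum_ix_i\bar y_i)$ and vanishes precisely when $\sum_ix_i\bar y_i\in\mathbb R$ -- the defining condition of $\dot S^{2N-1}_\mathbb C$. This gives $[\dot S^{2N-1}_\mathbb C]\subset S^{N-1}_{\mathbb C,+}$, and hence the sphere condition in all four cases.

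Next, introducing the shorthand $u_i=x_i+iy_i$, $v_i=x_i-iy_i$ with primed versions for the adjoints, one computes that each triple product $z_i^{e_1}z_j^{e_2}z_k^{e_3}$ is an antidiagonal $2\times 2$ matrix whose two nonzero entries are single products of three complex scalars taken from $\{u_i,v_i,u_i',v_i'\}$. Classical commutativity of $X$ lets us reorder these three-factor products freely, yielding $z_i^{e_1}z_j^{e_2}z_k^{e_3}=z_k^{e_3}z_j^{e_2}z_i^{e_1}$ and hence $[\dot S^{2N-1}_\mathbb C]\subset S^{N-1}_{\mathbb C,**}$. A similar template for length-two products turns $z_iz_j^*=z_jz_i^*$ and $z_i^*z_j=z_j^*z_i$ into the pair of conditions $x_i\bar x_j+y_i\bar y_j\in\mathbb R$ and $x_i\bar y_j-y_i\bar x_j\in i\mathbb R$, which is exactly the definition of $\ddot S^{2N-1}_\mathbb C$; combined with the $**$ step this gives $[\ddot S^{2N-1}_\mathbb C]\subset S^{N-1}_{\mathbb C,\circ}$.

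For the two remaining embeddings I would exploit the more rigid parametric structures. On $\mathbb TS^{2N-1}_\mathbb R$, writing $x_i=up_i$, $y_i=up_{N+i}$ makes every matrix entry of $z_iz_j$ and $z_iz_j^*$ collapse to a product of two real scalars (the factors of $u\bar u$ cancelling), so $z_iz_j$ and $z_iz_j^*$ become scalar matrices $c_{ij}I$ with $c_{ij}=c_{ji}$; this upgrades the half-commutation to full commutativity, giving $[\mathbb TS^{2N-1}_\mathbb R]\subset S^{N-1}_\mathbb C$. On $\mathbb T^2S^{N-1}_\mathbb R$ the coordinates further factor as $z_i=(\lambda+i\mu)p_iU$ with $U=\begin{pmatrix}0&u\\ \bar u&0\end{pmatrix}$ a self-adjoint unitary, so $U^2=1$ forces $z_iz_j^*=p_ip_jI=z_jz_i^*$ and $z_i^*z_j=p_ip_jI=z_j^*z_i$; both commutativity and the $\#$ relations hold, placing $[\mathbb T^2S^{N-1}_\mathbb R]$ inside $\mathbb TS^{N-1}_\mathbb R=S^{N-1}_\mathbb C\cap S^{N-1}_{\mathbb C,\circ}$ via Proposition 1.3. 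The main obstacle in the whole argument is the case analysis in step three; once one adopts the $u,v,u',v'$ notation everything reduces to trivial identities among commuting complex scalars, and the internal inclusions on the left then follow automatically from functoriality.
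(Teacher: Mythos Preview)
Your argument is correct and establishes the four inclusions claimed in the theorem. The overall strategy---checking directly that the doubled coordinates satisfy the defining relations of each target sphere---is sound, and your computations with the antidiagonal matrices (and the factorization $z_i=(\lambda+i\mu)p_iU$ on $\mathbb T^2S^{N-1}_\mathbb R$) are accurate.

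The paper takes a somewhat different route. Rather than verifying each inclusion separately, it first builds the single morphism $C(S^{N-1}_{\mathbb C,**})\to M_2(C(\dot S^{2N-1}_\mathbb C))$ and then, for each of the three subspheres $S^{N-1}_\mathbb C$, $S^{N-1}_{\mathbb C,\circ}$, $\mathbb TS^{N-1}_\mathbb R$, asks which extra conditions on the parameter $(x,y)\in\dot S^{2N-1}_\mathbb C$ force the $z_i$ to satisfy the additional relations. It then solves these conditions and shows that the solution sets are \emph{exactly} $\mathbb TS^{2N-1}_\mathbb R$, $\ddot S^{2N-1}_\mathbb C$, $\mathbb T^2S^{N-1}_\mathbb R$ respectively. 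In other words, the paper proves a stronger statement: these classical manifolds are the \emph{maximal} parameter spaces for the corresponding noncommutative spheres. Your approach, by contrast, uses the explicit parametric descriptions from the start and checks only one direction, which is all that Theorem~2.4 literally asserts.

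Your method is therefore more elementary and avoids the case analysis the paper does to characterize the parameter spaces. The trade-off is that the paper's extra content (maximality) is reused verbatim in the proof of Theorem~3.7, where the same computations identify the quantum-group parameter spaces; with your approach that later argument would need to be redone from scratch.
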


\begin{proof}
We have to prove that the $2\times2$ matrix model construction $z_i=x_i'+iy_i'$, with $w'=(^0_{\bar{w}}{\ }^w_0)$, induces morphisms of algebras as follows:
$$\xymatrix@R=15mm@C=15mm{
C(S^{N-1}_{\mathbb C,**})\ar[r]\ar[d]&C(S^{N-1}_\mathbb C)\ar[d]\\
C(S^{N-1}_{\mathbb C,\circ})\ar[r]&C(\mathbb TS^{N-1}_\mathbb R)}
\quad
\xymatrix@R=8mm@C=15mm{\\ \longrightarrow\\}\quad
\xymatrix@R=15mm@C=15mm{
M_2(C(\dot{S}^{2N-1}_\mathbb C))\ar[r]\ar[d]&M_2(C(\mathbb TS^{2N-1}_\mathbb R))\ar[d]\\
M_2(C(\ddot{S}^{2N-1}_\mathbb R))\ar[r]&M_2(C(\mathbb T^2S^{N-1}_\mathbb R))}$$

We will first construct the morphism $C(S^{N-1}_{\mathbb C,**})\to M_2(C(\dot{S}^{2N-1}_\mathbb C))$, and then we will obtain the remaining 3 morphisms by factorizing this morphism.

{\bf 1.} We first construct the morphism at top left. We recall from Proposition 2.3 above and its proof that with $z_i=x_i+iy_i$, we have the following equivalence:
$$\sum_iz_iz_i^*=\sum_iz_i^*z_i=1\iff \sum_ix_i^2+y_i^2=1,\sum_i[x_i,y_i]=0$$

In our situation now, with $z_i=x_i'+iy_i'$, and $(x,y)\in\dot{S}^{2N-1}_\mathbb C$, we have:
\begin{eqnarray*}
\sum_ix_i'^2+y_i'^2&=&\sum_i\begin{pmatrix}|x_i|^2&0\\0&|x_i|^2\end{pmatrix}+\begin{pmatrix}|y_i|^2&0\\0&|y_i|^2\end{pmatrix}=\begin{pmatrix}1&0\\0&1\end{pmatrix}\\
\sum_i[x_i',y_i']&=&\sum_i\begin{pmatrix}x_i\bar{y}_i&0\\ 0&\bar{x}_iy_i\end{pmatrix}-\begin{pmatrix}y_i\bar{x}_i&0\\ 0&\bar{y}_ix_i\end{pmatrix}=\begin{pmatrix}0&0\\0&0\end{pmatrix}
\end{eqnarray*}

Thus, we have a morphism $C(S^{N-1}_{\mathbb C,+})\to M_2(C(\dot{S}^{2N-1}_\mathbb C))$. Now since the matrices $x_i',y_i'$ half-commute, the variables $z_i=x_i'+iy_i'$ and their adjoints $z_i^*=x_i'-iy_i'$ half-commute as well, and we therefore obtain a factorization $C(S^{N-1}_{\mathbb C,**})\to M_2(C(\dot{S}^{2N-1}_\mathbb C))$.

{\bf 2.} We prove now that, when restricting attention to $\ddot{S}^{2N-1}_\mathbb C\subset\dot{S}^{2N-1}_\mathbb C$, we obtain a model for $S^{N-1}_{\mathbb C,\circ}\subset S^{N-1}_{\mathbb C,**}$. For this purpose, we recall that $S^{N-1}_{\mathbb C,\circ}\subset S^{N-1}_{\mathbb C,**}$ appears via the relations $ab^*=ba^*,a^*b=b^*a$. With $a=x+iy,b=z+it$, these relations are:
$$\begin{cases}
(x+iy)(z-it)=(z+it)(x-iy)\\
(x-iy)(z+it)=(z-it)(x+iy)
\end{cases}$$

These relations read $[x,z]+[y,t]=\pm i(xt+tx-yz-zy)$, so they are equivalent to:
$$\begin{cases}
[x,z]+[y,t]=0\\ 
xt+tx=yz+zy
\end{cases}$$ 

Now in terms of our variables $z_i=x_i'+iy_i'$, we must have:
$$\begin{cases}
[x_i',x_j']+[y_i',y_j']=0\\
x_i'y_j'+y_j'x_i'=y_i'x_j'+x_j'y_i'
\end{cases}$$

In order to apply these equations to our $2\times2$ matrices, we use the following formula:
$$x'y'=\begin{pmatrix}0&x\\ \bar{x}&0\end{pmatrix}\begin{pmatrix}0&y\\ \bar{y}&0\end{pmatrix}=\begin{pmatrix}x\bar{y}&0\\ 0&\bar{x}y\end{pmatrix}$$

We are therefore led to the following equations, for the parameter space for $S^{N-1}_{\mathbb C,\circ}$:
$$\begin{cases}
x_i\bar{x}_j-x_j\bar{x}_i+y_i\bar{y}_j-y_j\bar{y}_i=0\\
x_i\bar{y}_j+y_j\bar{x}_i=y_i\bar{x}_j+x_j\bar{y}_i
\end{cases}$$

These latter equations can be written more conveniently, as follows:
$$\begin{cases}
x_i\bar{x}_j+y_i\bar{y}_j=x_j\bar{x}_i+y_j\bar{y}_i\\
x_i\bar{y}_j-y_i\bar{x}_j=x_j\bar{y}_i-y_j\bar{x}_i
\end{cases}$$

But these are exactly the equations for $\ddot{S}^{2N-1}_\mathbb C\subset\dot{S}^{2N-1}_\mathbb C$, and we are done.

{\bf 3.} We prove now that, when restricting attention to $\mathbb TS^{2N-1}_\mathbb R\subset\dot{S}^{2N-1}_\mathbb C$, we obtain a model for $S^{N-1}_\mathbb C\subset S^{N-1}_{\mathbb C,**}$. In order to obtain such a model, the variables $z_i,z_i^*$ must commute, and so the variables $x_i',y_i'$ must commute. Thus we must have:
$$x_i\bar{x}_j\in\mathbb R,\quad y_i\bar{y}_j\in\mathbb R,\quad x_i\bar{y}_j\in\mathbb R$$

With $\lambda=||x||,\mu=||y||$ the first two conditions read $x\in\lambda\mathbb TS^{N-1}_\mathbb R,y\in\mu\mathbb TS^{N-1}_\mathbb R$, so let us write $x=\lambda up,y=\mu vq$ with $u,v\in\mathbb T$ and $p,q\in S^{N-1}_\mathbb R$. The third condition tells us then that we must have $u\bar{v}\in\mathbb R$, and so $v=\pm u$, and by changing if necessary $q\to-q$, we can assume that we have $u=v$. We conclude that we have $(x,y)=u(\lambda p,\mu q)$, and since the point $(\lambda p,\mu q)$ must belong to the real sphere $S^{2N-1}_\mathbb R$, we are done.

{\bf 4.} We prove now that $\mathbb T^2S^{N-1}_\mathbb R$ is the model space for $\mathbb TS^{N-1}_\mathbb R$. By functoriality, this latter model space appears as an intersection, $\mathbb TS^{2N-1}_\mathbb R\cap\ddot{S}^{2N-1}_\mathbb C$. So, let us pick a point $(x,y)\in\mathbb TS^{2N-1}_\mathbb R$, and apply to it the equations for $\ddot{S}^{2N-1}_\mathbb C$. These equations are:
$$\begin{cases}
x_i\bar{x}_j+y_i\bar{y}_j\in\mathbb R\\
x_i\bar{y}_j-y_i\bar{x}_j\in i\mathbb R
\end{cases}$$

The first equations are automatic, and since the variables in the second equations are real as well, these equations tell us that we must have $x_i\bar{y}_j=y_i\bar{x}_j$, for any $i,j$. Now with $(x,y)=u(p,q)$ these latter equations read $p_iq_j=q_ip_j$, for any $i,j$. We deduce that we must have $(p,q)=(\lambda r,\mu r)$ with $(\lambda,\mu)\in S^1_\mathbb R$ and $r\in S^{N-1}_\mathbb R$, and we are done.
\end{proof}

As an application of the above methods, we have the following result:

\begin{proposition}
The inclusions between the $6$ half-liberated spheres
$$\xymatrix@R=15mm@C=15mm{
S^{N-1}_\mathbb C\ar[r]&S^{N-1}_{\mathbb C,**}\ar[r]&S^{N-1}_{\mathbb C,*}\\
\mathbb TS^{N-1}_\mathbb R\ar[r]\ar[u]&S^{N-1}_{\mathbb C,\circ}\ar[u]\ar[r]&S^{N-1}_{\mathbb C,\#}\ar[u]}$$
are all proper, at any $N\geq2$.
\end{proposition}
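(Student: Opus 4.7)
(Proof plan.)
My plan is to exhibit, for each of the seven claimed inclusions, a witness showing it is proper, drawn from the matrix and free-complexification models built in Sections 1--2. The inclusion $\mathbb T S^{N-1}_\mathbb R\subsetneq S^{N-1}_\mathbb C$ is already strict at the classical level: the point $(1/\sqrt2,i/\sqrt2,0,\ldots,0)\in S^{N-1}_\mathbb C$ is not of the form $u(x_1,\ldots,x_N)$ with $u\in\mathbb T$ and $x_i\in\mathbb R$, since that would force $z_1/z_2=1/i\in\mathbb R$. The remaining six strict inclusions will be obtained from three families of models.

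First, the doubling $|S^{N-1}_\mathbb C|\subset S^{N-1}_{\mathbb R,*}\subset S^{N-1}_{\mathbb C,**}$ from Proposition 2.2 and Proposition 1.4 provides self-adjoint off-diagonal $2\times 2$ matrix coordinates $z_i'$ that half-commute but do not commute in general (already visible on $S^1_\mathbb C$ by taking $(z_1,z_2)=(1/\sqrt2,i/\sqrt2)$). Since the $z_i'$ are self-adjoint, $z_i'(z_j')^*=z_i'z_j'$, so the $\#$-relations $ab^*=ba^*$ fail as soon as commutativity does. This single model simultaneously witnesses $S^{N-1}_\mathbb C\subsetneq S^{N-1}_{\mathbb C,**}$, $S^{N-1}_{\mathbb C,\circ}\subsetneq S^{N-1}_{\mathbb C,**}$ and $S^{N-1}_{\mathbb C,\#}\subsetneq S^{N-1}_{\mathbb C,*}$.

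Second, the identifications from Proposition 2.1, namely $\widetilde S^{N-1}_\mathbb C=S^{N-1}_{\mathbb C,*}$ and $\widetilde S^{N-1}_\mathbb R\subset S^{N-1}_{\mathbb C,\#}$, realize the coordinates as $w_i=ux_i$ inside $C(\mathbb T)*C(X)$. The two elements $ux_1ux_1ux_2$ and $ux_2ux_1ux_1$ are distinct reduced words of length six in the free product (the Haar generator $u$ and the coordinate functions $x_i$ all have zero mean with respect to the natural tracial states), hence linearly independent by freeness. This yields $w_1w_1w_2\neq w_2w_1w_1$, so the $**$-relations $abc=cba$ fail in both models, giving $S^{N-1}_{\mathbb C,**}\subsetneq S^{N-1}_{\mathbb C,*}$ and, combined with the intersection identity $S^{N-1}_{\mathbb C,\circ}=S^{N-1}_{\mathbb C,\#}\cap S^{N-1}_{\mathbb C,**}$ from Proposition 1.3, also $S^{N-1}_{\mathbb C,\circ}\subsetneq S^{N-1}_{\mathbb C,\#}$.

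The last remaining inclusion, $\mathbb T S^{N-1}_\mathbb R\subsetneq S^{N-1}_{\mathbb C,\circ}$, is the step where I expect the main technical obstacle: one must exhibit a non-abelian matrix model specifically for the $\circ$-sphere, that is, a point not merely of $S^{2N-1}_\mathbb C$ but of the smaller parameter space $\ddot S^{2N-1}_\mathbb C$ of Theorem 2.4, whose image in the matrix model is non-commutative. For $N=2$ a direct search gives $(x_1,x_2,y_1,y_2)=(1/2,1/2,i/2,-i/2)\in\ddot S^3_\mathbb C$, for which $z_1=x_1'+iy_1'$ and $z_2=x_2'+iy_2'$ compute to the non-commuting matrix units $E_{21}$ and $E_{12}$; higher $N$ is handled by padding with zeros. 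Combining this with the first two steps gives all seven strict inclusions.
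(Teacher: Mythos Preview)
Your argument is correct, with one small misstatement: Proposition~2.1 gives only an inclusion $\widetilde{S}^{N-1}_\mathbb C\subset S^{N-1}_{\mathbb C,*}$, not an equality (the equality there is $\widetilde{S}^{N-1}_{\mathbb C,*}=S^{N-1}_{\mathbb C,*}$). This is harmless, since the inclusion is all you need to push the failure of $abc=cba$ from $\widetilde{S}^{N-1}_\mathbb C$ up to $S^{N-1}_{\mathbb C,*}$.

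The overall strategy matches the paper's---everything is witnessed by the models of Sections~1--2---but the organization differs in two places. For the three vertical inclusions the paper invokes Theorem~1.6 in one stroke: the projective versions are $P^N_\mathbb R$ on the bottom row and $P^N_\mathbb C$ on the top row, and these differ for $N\geq 2$. You instead treat $\mathbb TS^{N-1}_\mathbb R\subsetneq S^{N-1}_\mathbb C$ classically and then use the doubling $|S^{N-1}_\mathbb C|\subset S^{N-1}_{\mathbb R,*}$ together with the observation that the $\#$-relations collapse to commutativity on self-adjoint coordinates; this is a nice alternative that avoids the projective-space computation. For the inclusion $S^{N-1}_{\mathbb C,**}\subsetneq S^{N-1}_{\mathbb C,*}$ the paper argues indirectly (if it were an equality, intersecting with $S^{N-1}_{\mathbb C,\#}$ would force $S^{N-1}_{\mathbb C,\circ}=S^{N-1}_{\mathbb C,\#}$), whereas you exhibit a direct free-product witness; both work, and the intersection trick is slightly more economical since it recycles the $\circ\subsetneq\#$ step. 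For the final inclusion $\mathbb TS^{N-1}_\mathbb R\subsetneq S^{N-1}_{\mathbb C,\circ}$ both proofs take the same route through $\ddot S^3_\mathbb C$; the paper's point is $\frac{1}{\sqrt 2}(i,0,0,1)$ while yours is $(1/2,1/2,i/2,-i/2)$, and your resulting matrices $z_1=E_{21}$, $z_2=E_{12}$ are arguably the cleanest possible witnesses.
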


\begin{proof}
By using Theorem 1.6, the vertical maps are all proper. For the horizontal maps, we can use Proposition 2.1, Proposition 2.2 and Theorem 2.4:

\underline{$S^{N-1}_\mathbb C\subset S^{N-1}_{\mathbb C,**}$.} This follows from Proposition 2.2, because the inclusion $|S^{N-1}_\mathbb C|\subset S^{N-1}_{\mathbb R,*}$ found there shows that $S^{N-1}_{\mathbb R,*}$ is not classical. Thus, $S^{N-1}_{\mathbb C,**}$ is not classical either.

\underline{$S^{N-1}_{\mathbb C,\circ}\subset S^{N-1}_{\mathbb C,\#}$.} Here we can use the inclusion $\widetilde{S}^{N-1}_\mathbb R\subset S^{N-1}_{\mathbb C,\#}$ from Proposition 2.1. Indeed, since the standard coordinates $w_i=ux_i$ on the free complexification $\widetilde{S}^{N-1}_\mathbb R$ don't satisfy the relations $abc=cba$, we have $\widetilde{S}^{N-1}_\mathbb R\not\subset S^{N-1}_{\mathbb C,**}$, and so $S^{N-1}_{\mathbb C,\#}\not\subset S^{N-1}_{\mathbb C,**}$, as subspaces of $S^{N-1}_{\mathbb C,+}$. It follows that $(S^{N-1}_{\mathbb C,**}\cap S^{N-1}_{\mathbb C,\#})\subset S^{N-1}_{\mathbb C,\#}$ is indeed proper.

\underline{$S^{N-1}_{\mathbb C,**}\subset S^{N-1}_{\mathbb C,*}$.} Assuming that this inclusion is an equality, by intersecting with $S^{N-1}_{\mathbb C,\#}$ we would obtain that $S^{N-1}_{\mathbb C,\circ}\subset S^{N-1}_{\mathbb C,\#}$ is an equality too, contradiction.

\underline{$\mathbb TS^{N-1}_\mathbb R\subset S^{N-1}_{\mathbb C,\circ}$.} Here we must show that $S^{N-1}_{\mathbb C,\circ}=S^{N-1}_{\mathbb C,**}\cap S^{N-1}_{\mathbb C,\#}$ is not classical. Since we have embeddings between spheres $S^1_{\mathbb C,\times}\subset S^{N-1}_{\mathbb C,\times}$ given by $x_3=x_4=\ldots=x_N=0$, it is enough to solve the problem at $N=2$. So, consider the manifold $\ddot{S}^3_\mathbb C\subset S^3_\mathbb C$ used in Theorem 2.4. The equations defining it, over $(x_1,x_2,y_1,y_2)\in S^3_\mathbb C$, are as follows:
$$\begin{cases}
x_1\bar{y}_1+x_2\bar{y}_2\in\mathbb R\\
x_1\bar{x}_2+y_1\bar{y}_2\in\mathbb R\\
x_1\bar{y}_2-y_1\bar{x}_2\in i\mathbb R
\end{cases}$$

Observe now that these equations are satisfied for the following point:
$$(x_1,x_2,y_1,y_2)=\frac{1}{\sqrt{2}}(i,0,0,1)$$

The corresponding matrices $z_1,z_2$ for this special point are then:
$$z_1=\frac{1}{\sqrt{2}}\begin{pmatrix}0&i\\ -i&0\end{pmatrix}\qquad\qquad 
z_2=\frac{1}{\sqrt{2}}\begin{pmatrix}0&i\\ i&0\end{pmatrix}$$

Now since these two matrices do not commute, this finishes the proof.
\end{proof}

\section{Quantum groups}

In this section and in the next one we further advance on the understanding of the 6 half-liberated spheres, by studying the associated quantum isometry groups.

Our starting point is the following definition, due to Wang \cite{wa1}:

\begin{definition}
The free analogue of $C(U_N)$ is the universal $C^*$-algebra
$$C(U_N^+)=C^*\left((u_{ij})_{i,j=1,\ldots,N}\Big|u,u^t={\rm unitaries}\right)$$
with Hopf algebra maps $\Delta(u_{ij})=\sum_ku_{ik}\otimes u_{kj}$, $\varepsilon(u_{ij})=\delta_{ij}$, $S(u_{ij})=u_{ji}^*$.
\end{definition}

As explained in \cite{wa1}, the above formulae define indeed a comultiplication, counit and antipode, and we have a Hopf $C^*$-algebra in the sense of Woronowicz \cite{wo1}, \cite{wo2}. Observe that the square of the antipode is the identity, $S^2=id$. The underlying noncommutative space $U_N^+$ is a compact quantum group, called free analogue of $U_N$.

Observe the analogy with Definition 1.1. We can build on this analogy, by introducing ``quantum group analogues'' of the spheres in Definition 1.2, simply by imposing the relations there to the standard coordinates of $U_N^+$. We obtain in this way:

\begin{proposition}
We have an intersection diagram of compact quantum groups
$$\xymatrix@R=15mm@C=17mm{
U_N\ar[r]&U_N^{**}\ar[r]&U_N^*\\
\mathbb TO_N\ar[r]\ar[u]&U_N^\circ\ar[u]\ar[r]&U_N^\#\ar[u]}$$
with $U_N^*,U_N^{**},U_N^\#,U_N^\circ$ being defined inside $U_N^+$ via the relations in Definition 1.2.
\end{proposition}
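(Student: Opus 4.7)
The plan is to verify two separate assertions: first, that each of $U_N^*, U_N^{**}, U_N^\#, U_N^\circ$ is genuinely a compact quantum group — i.e. the Hopf $C^*$-algebra structure of $C(U_N^+)$ descends to each quotient; and second, that the resulting square is an intersection diagram of inclusions, exactly as in the sphere case of Proposition 1.3.

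For the quantum group structure, I would fix one of the defining relation families $R$ from Definition 1.2 (applied now to $a,b,c \in \{u_{ij}\}$ or $\{u_{ij}, u_{ij}^*\}$) and check compatibility with the Woronowicz maps $\Delta, \varepsilon, S$. The counit is immediate since $\varepsilon(u_{ij}) = \delta_{ij}$ reduces any monomial identity to a trivial numerical one, and the antipode $S(u_{ij}) = u_{ji}^*$ sends each relation family to one of the same type (with letter order reversed and $u \leftrightarrow u^*$, both of which preserve the family as a whole). The real content is the comultiplication: for the half-commutation type $abc = cba$, expanding
\[
\Delta(u_{i_1j_1})\Delta(u_{i_2j_2})\Delta(u_{i_3j_3}) = \sum_{k_1,k_2,k_3} u_{i_1k_1}u_{i_2k_2}u_{i_3k_3} \otimes u_{k_1j_1}u_{k_2j_2}u_{k_3j_3}
\]
and applying $abc = cba$ in each tensor leg converts this into $\Delta(u_{i_3j_3})\Delta(u_{i_2j_2})\Delta(u_{i_1j_1})$, up to the evident reindexing. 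Exactly the same pattern, with two letters instead of three, handles the ``sharp'' relations $ab^* = ba^*$ and $a^*b = b^*a$, and the mixed relation $ab^*c = cb^*a$ works identically.

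For the intersection diagram structure, all six inclusions follow from the corresponding implications among the defining relation families already established for the spheres in the proof of Proposition 1.3 (e.g.\ the computation $ab^*c = ba^*c = bc^*a = cb^*a$ showing that the $\#$-relations imply the $*$-relation), and transport to quantum groups by Hopf-algebra functoriality. The intersection property on the right of the diagram, $U_N^\circ = U_N^{**} \cap U_N^\#$, is immediate by definition. For the intersection claim on the left, that $U_N \cap U_N^\circ = \mathbb{T}O_N$, I would run the classical-point argument of Proposition 1.3 on the matrix entries: a classical unitary $(v_{ij}) \in U_N$ satisfying $v_{ij}\bar v_{k\ell} = v_{k\ell}\bar v_{ij}$ has a single common value of $v_{ij}/\bar v_{ij}$ on the nonzero entries, so multiplying by a suitable scalar $\lambda \in \mathbb{T}$ one obtains a real orthogonal matrix, giving exactly $\mathbb{T}O_N$.

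I expect no serious obstacle: the proof is a direct transcription of Proposition 1.3 from spheres to quantum groups, with the one nontrivial ingredient being the $\Delta$-compatibility check above, which goes through cleanly because the defining relations are themselves homogeneous of degree-type that is preserved by substituting $u_{ij} \mapsto \sum_k u_{ik} \otimes u_{kj}$.
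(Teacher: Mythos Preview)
Your proof is correct and follows essentially the same plan as the paper: verify that the Woronowicz maps $\Delta,\varepsilon,S$ descend to each quotient, then import the inclusion and intersection arguments directly from the sphere case in Proposition~1.3.

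The one point of genuine difference is the treatment of $U_N^\#$. You check the degree-two relations $AB^*=BA^*$ and $A^*B=B^*A$ for $U_{ij}=\sum_k u_{ik}\otimes u_{kj}$ directly, by applying the relation in each tensor leg and reindexing; this works cleanly. The paper instead first proves the equivalence
\[
\bigl(ab^*=ba^*,\ a^*b=b^*a\bigr)\ \Longleftrightarrow\ \bigl(ab^*c\text{ depends only on the set }\{a,b,c\}\bigr),
\]
and then verifies $\Delta$-compatibility at the level of the degree-three expression $U_{ix}U_{jy}^*U_{kz}$, observing that any permutation $\sigma\in S_3$ of the outer indices can be matched by the same permutation of the summation indices $a,b,c$. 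Your route is more elementary and shorter; the paper's reformulation is a bit more work up front but exposes a hidden $S_3$-symmetry of the $\#$-relations that is of independent interest (and is in the same spirit as the $S_3$-invariance characterizing the half-commutation relations). Either argument suffices for the proposition.
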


\begin{proof}
The quantum groups $U_N^*,U_N^{**}$ were introduced and studied in \cite{bdd}, \cite{bdu}. Regarding $U_N^\#$, our first claim is that its defining relations can be reformulated as follows:
$$\Big(ab^*=ba^*,a^*b=b^*a\Big)\iff\Big(ab^*c\ {\rm depends\ only\ on\ } \{a,b,c\}\Big)$$

Indeed, the implication ``$\implies$'' can be checked by alternatively using the relations $ab^*=ba^*,a^*b=b^*a$, on left and on the right, as follows:
$$ab^*c=ba^*c=bc^*a=cb^*a=ca^*b=ac^*b$$

As for the converse implication, ``$\Longleftarrow$'', the first formula follows from the following computation, and the proof of the second formula is similar:
$$ab^*c=ba^*c\implies\sum_cab^*cc^*=\sum_cba^*cc^*\implies ab^*=ba^*$$

With the above claim in hand, the construction of $\varepsilon,S$ is clear. Concerning now the comultiplication $\Delta$, observe that with $U_{ij}=\sum_ku_{ik}\otimes u_{kj}$, we have:
$$U_{ix}U_{jy}^*U_{kz}=\sum_{abc}u_{ia}u_{jb}^*u_{kc}\otimes u_{ax}u_{by}^*u_{cz}$$

Now let us permute $(ix),(jy),(kz)$. We can use the same permutation $\sigma\in S_3$ in order to permute $a,b,c$, in a similar way, and this gives the existence of $\Delta$. 

Finally, if we set $U_N^\circ=U_N^{**}\cap U_N^\#$, we obtain as well a compact quantum group.

Thus, we have the 6 quantum groups in the statement. The inclusions are clear, and the intersection claim $U_N\cap U_N^\circ=\mathbb TO_N$ follows as in the proof of Proposition 1.3.
\end{proof}

We have as well analogues of the other basic results regarding spheres. First, we have the following analogue of Theorem 1.6 above, basically known since \cite{bdd}:

\begin{proposition}
The projective versions of the $6$ quantum groups are:
$$\xymatrix@R=14mm@C=14mm{
PU_N\ar@{=}[r]&PU_N\ar@{=}[r]&PU_N\\
PO_N\ar@{=}[r]\ar[u]&PO_N\ar@{=}[r]\ar[u]&PO_N\ar[u]}$$
In addition, we have $PO_N=O_N/\mathbb Z_2$ and $PU_N=U_N/\mathbb T$.
\end{proposition}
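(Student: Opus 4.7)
The plan is to mimic the proof of Theorem 1.6, replacing the sphere coordinates $z_i$ by the matrix entries $u_{ij}$. Recall that for a compact quantum group $G \subset U_N^+$, the projective version $PG$ is defined by taking $C(PG) \subset C(G)$ to be the $*$-subalgebra generated by the products $p_{ij,kl} = u_{ij} u_{kl}^*$, equivalently the fixed subalgebra under the $\mathbb T$-action $u_{ij} \mapsto \lambda u_{ij}$; this produces a compact quantum group (the Hopf structure restricts well). By functoriality we obtain a diagram of projective versions linking $PU_N$, $PU_N^{**}$, $PU_N^*$ in the upper row and $P(\mathbb TO_N)$, $PU_N^\circ$, $PU_N^\#$ in the lower row, with all arrows being surjections at the algebra level, so it suffices to establish the two surjections $C(PU_N) \twoheadrightarrow C(PU_N^*)$ and $C(PO_N) \twoheadrightarrow C(PU_N^\#)$, namely the inclusions $PU_N^* \subset PU_N$ and $PU_N^\# \subset PO_N$.

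For $PU_N^* \subset PU_N$, the key computation is to use $ab^*c = cb^*a$ for $a,b,c \in \{u_{ij}\}$ twice, as in the sphere proof of $P^N_{\mathbb C,*} \subset P^N_\mathbb C$:
\begin{eqnarray*}
(u_{ij}u_{kl}^*)(u_{mn}u_{pq}^*) &=& u_{ij}u_{kl}^*u_{mn} \cdot u_{pq}^* \;=\; u_{mn}u_{kl}^*u_{ij} \cdot u_{pq}^*\\
&=& u_{mn} \cdot u_{pq}^* u_{ij} u_{kl}^* \;=\; (u_{mn}u_{pq}^*)(u_{ij}u_{kl}^*).
\end{eqnarray*}
Thus the generators $p_{ij,kl}$ of $C(PU_N^*)$ pairwise commute, and the same presentation computations that yield $PU_N$ on the commutative side then give $PU_N^* = PU_N$.

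For $PU_N^\# \subset PO_N$, the relation $ab^* = ba^*$ with $a = u_{ij}$, $b = u_{kl}$ directly gives $p_{ij,kl} = u_{ij}u_{kl}^* = u_{kl}u_{ij}^* = p_{kl,ij}$, so the matrix of generators is symmetric under the transposition $(ij) \leftrightarrow (kl)$. Since we also have $PU_N^\# \subset PU_N^* = PU_N$ by the previous step combined with the vertical arrow, $PU_N^\#$ is classical; the symmetry condition is exactly what characterizes $PO_N$ inside $PU_N$, yielding $PU_N^\# \subset PO_N$.

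The final identifications $PO_N = O_N/\mathbb Z_2$ and $PU_N = U_N/\mathbb T$ are classical and follow from the standard fact that the kernel of the evident surjection $G \to PG$ is exactly the central subgroup acting trivially on all products $u_{ij}u_{kl}^*$. The four remaining equalities in the diagram ($PU_N^{**} = PU_N$, $PU_N^\circ = PO_N$, and the two for the bottom left entry) are then forced by the inclusions of the diagram and the intersection property of Proposition 3.2. The main obstacle is really just verifying the first commutation computation above; everything else is bookkeeping once one has the right definition of $PG$ and the projective analogue of Gelfand's theorem in place.
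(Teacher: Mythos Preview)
Your commutativity computation for the generators $p_{ij,kl}=u_{ij}u_{kl}^*$ over $U_N^*$ is correct and coincides with the paper's argument for the inclusion $PU_N^*\subset(PU_N^*)_{class}$. The treatment of $PU_N^\#$ via the symmetry $p_{ij,kl}=p_{kl,ij}$ is also the paper's argument, modulo the previous step.

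There is, however, a genuine gap in the passage from ``the $p_{ij,kl}$ commute'' to ``$PU_N^*=PU_N$''. Your phrase ``the same presentation computations that yield $PU_N$ on the commutative side'' is not justified: in the sphere case (Theorem~1.6) one has an explicit universal presentation $C(P^N_\mathbb C)=C^*_{comm}(p\mid p=p^*=p^2,\ Tr(p)=1)$, and one simply checks these relations. For $PU_N$ there is no such off-the-shelf presentation; a priori, commutativity of the $p_{ij,kl}$ only tells you that $PU_N^*$ is a classical compact group containing $PU_N$, not that it equals $PU_N$. The paper closes this gap by invoking the chain
\[
PU_N^*\subset(PU_N^*)_{class}\subset(PU_N^+)_{class}=PU_N,
\]
where the last equality $(PU_N^+)_{class}=PU_N$ is a nontrivial Tannakian result taken from \cite{bve}. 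This is exactly the missing ingredient in your argument, and without it (or an equivalent substitute) the proof is incomplete.
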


\begin{proof}
By functoriality, it is enough to prove that we have inclusions $PU_N^*\subset PU_N$ and $PU_N^\#\subset PO_N$. As explained in \cite{bdd}, the first inclusion can be deduced as follows:
$$PU_N^*\subset(PU_N^*)_{class}\subset(PU_N^+)_{class}=PU_N$$

Indeed, the first inclusion follows from the fact that the projective version coordinates $w_{ia,jb}=u_{ij}u_{ab}^*$ commute, the second inclusion follows by functoriality from $U_N^*\subset U_N^+$, and the third inclusion follows from Tannakian duality, as explained in \cite{bve}.

Regarding now the second inclusion, this follows from $PU_N^\#\subset PU_N^*\subset PU_N$, and from the fact that the variables $w_{ia,jb}=u_{ij}u_{ab}^*$ are self-adjoint over $PU_N^\#$.
\end{proof}

Regarding now the free complexifications, we have the following result, which is much more precise than the one for the spheres, from Proposition 2.1 above:

\begin{proposition}
The free complexifications of the $6$ quantum groups are
$$\xymatrix@R=14mm@C=14mm{
U_N^*\ar@{=}[r]&U_N^*\ar@{=}[r]&U_N^*\\
U_N^\#\ar@{=}[r]\ar[u]&U_N^\#\ar@{=}[r]\ar[u]&U_N^\#\ar[u]}$$
with all the isomorphisms mapping standard coordinates to standard coordinates.
\end{proposition}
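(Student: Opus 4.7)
The overall strategy is to exploit functoriality of the free complexification operation $G \mapsto \widetilde G$: an inclusion $G \subset H$ yields $\widetilde G \subset \widetilde H$, so applying this operation to the diagram of Proposition 3.2 produces a commutative sub-diagram of inclusions among the six $\widetilde G$'s. To conclude that all three top-row entries equal $U_N^*$ and all three bottom-row entries equal $U_N^\#$, I then sandwich each row between two extreme inclusions: for the top row, $\widetilde{U_N^*} \subset U_N^*$ and $U_N^* \subset \widetilde{U_N}$, and analogously $\widetilde{U_N^\#} \subset U_N^\#$ and $U_N^\# \subset \widetilde{\mathbb TO_N}$ for the bottom row.

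The ``self'' identifications $\widetilde{U_N^*} = U_N^*$ and $\widetilde{U_N^\#} = U_N^\#$ --- which also give, by functoriality, the upper bounds throughout each row --- should follow from the counit-based argument of Proposition 2.1 adapted to the Hopf-algebra setting. By direct computation in $C(\mathbb T) * C(G)$ with $v_{ij} = zu_{ij}$ and $z^*z = 1$, one obtains $v_{ij}v_{kl}^*v_{mn} = z(u_{ij}u_{kl}^*u_{mn})$ and $v_{ij}v_{kl}^* = z(u_{ij}u_{kl}^*)z^*$, so the $U_N^*$ (resp.\ $U_N^\#$) relations on the $u$'s lift to the same relations on the $v$'s, yielding a surjective Hopf algebra morphism $C(U_N^*) \twoheadrightarrow C(\widetilde{U_N^*})$ (resp.\ for $U_N^\#$). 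This morphism is inverted by the counit map $\varepsilon * \mathrm{id}$ with $\varepsilon(z) = 1$, exactly as in the sphere case.

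The main obstacle is the two reverse containments $U_N^* \subset \widetilde{U_N}$ and $U_N^\# \subset \widetilde{\mathbb TO_N}$, for which no direct counit trick is available, since neither $C(U_N)$ nor $C(\mathbb TO_N)$ contains a natural $\mathbb T$-section producing the required half-liberated structure. I would establish them via Woronowicz Tannakian duality, following the approach of \cite{bdu}, \cite{rau}: the intertwiner category of $\widetilde G$ is obtained from that of $G$ by retaining only those $T \in \mathrm{Hom}_G(u^{\otimes \epsilon}, u^{\otimes \epsilon'})$ whose intertwining equation for $v = zu$ survives the non-commutativity of $z$ and $C(G)$ in the free product. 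An explicit combinatorial computation on colored diagrams should then show that this restriction cuts the $G = U_N$ category down to exactly the half-liberated category of colored permutations defining $U_N^*$ from \cite{bve}, \cite{bdu}; the parallel computation for $G = \mathbb TO_N$, with the internal $\mathbb T$ absorbing into the free complexification's $z$, produces the $U_N^\#$ category.
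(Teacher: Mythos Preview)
Your overall strategy and the ``easy'' half (the counit-based self-identifications $\widetilde{U_N^*}=U_N^*$ and $\widetilde{U_N^\#}=U_N^\#$, plus functoriality to get the chain of inclusions) match the paper exactly. For the ``hard'' half you diverge: you propose a direct Tannakian/diagrammatic computation of the intertwiner categories of $\widetilde{U_N}$ and $\widetilde{\mathbb T O_N}$, whereas the paper bypasses any such computation by invoking structure and classification results. Specifically, the paper first observes (via Proposition~3.3) that all six quantum groups have classical projective versions and hence are amenable, so the results of \cite{ba1} on free-type easy quantum groups apply at the full $C^*$-level; it then uses the classification from \cite{bve} that the only intermediate easy quantum groups $O_N\subset G\subset O_N^+$ are $O_N,\,O_N^*,\,O_N^+$, together with the fact that the projective version distinguishes these three, to pin down $\widetilde{U_N}=\widetilde{O_N^*}=U_N^*$ and $U_N^\#=\widetilde{O_N}$ by matching projective versions ($PU_N$ versus $PO_N$).

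Your route is feasible in principle --- the Tannaka category of $\widetilde G$ is indeed obtained from that of $G$ by a color-balancing restriction, and for $G=U_N$ this does produce the $U_N^*$ category --- but you leave the key combinatorial step entirely unperformed (``should then show''), and for $G=\mathbb T O_N$ the absorption argument you gesture at needs some care. The paper's route trades that computation for citations to existing classification theorems, making the proof shorter but more dependent on outside machinery; your route, if carried out, would be more self-contained. Note also that the paper works with $\widetilde{O_N}$ rather than $\widetilde{\mathbb T O_N}$ at the bottom-left; this is harmless since $O_N\subset\mathbb T O_N$ gives $\widetilde{O_N}\subset\widetilde{\mathbb T O_N}\subset\widetilde{U_N^\#}=U_N^\#$, so once $\widetilde{O_N}=U_N^\#$ is established the whole bottom row collapses.
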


\begin{proof}
The arguments in the proof of Proposition 2.1 extend to the quantum group case, and provide us with the following diagram:
$$\xymatrix@R=15mm@C=15mm{
\widetilde{U}_N\ar[r]&\widetilde{U}_N^*\ar@{=}[r]&U_N^*\\
\widetilde{O}_N\ar[r]\ar[u]&\widetilde{U}_N^\#\ar[u]\ar@{=}[r]&U_N^\#\ar[u]}$$

We must prove now that we have $\widetilde{O}_N=U_N^\#$, $\widetilde{U}_N=U_N^*$. For this purpose, we can use Proposition 3.3, and technology from \cite{ba1}. Indeed, since the projective version $PU_N^*$ is classical, we obtain that $U_N^*$, as well as all its subgroups, are amenable. Thus, we can indeed use the results in \cite{ba1}, established there at the level of reduced versions.

With notations and terminology from \cite{ba1}, the quantum groups $\widetilde{U}_N,U_N^*,U_N^\#$ are all easy (called ``free'' there), of infinite level, and appear as free complexifications. Thus the main result in \cite{ba1} applies, and shows that these 3 quantum groups must appear as free complexifications of certain intermediate easy quantum groups $O_N\subset O_N^\times\subset O_N^+$. 

On the other hand, we know from \cite{bve} that the only non-trivial intermediate easy quantum group $O_N\subset G\subset O_N^+$ is the half-liberation $G=O_N^*$. Thus, each of the 3 quantum groups $O_N^\times$ constructed above must satisfy $O_N^\times\in\{O_N,O_N^*,O_N^+\}$. 

In order to finish we use the fact, once again from \cite{bve}, that the projective versions of the quantum groups $O_N\subset O_N^*\subset O_N^+$ are the quantum groups $PO_N\subset PU_N\subset PO_N^+$. In particular, the projective version determines the quantum group. Now since we have $P\widetilde{U}_N=PU_N$, $PU_N^*=PU_N$, $PU_N^\#=PO_N$, we conclude that we have:
$$\widetilde{U}_N=\widetilde{O}_N^*,\qquad U_N^*=\widetilde{O}_N^*,\qquad U_N^\#=\widetilde{O}_N$$

Thus we have indeed $\widetilde{O}_N=U_N^\#$, $\widetilde{U}_N=U_N^*$, and we are done.
\end{proof}

Let us discuss now the analogues of the matrix model constructions from section 2 above. Following \cite{bdu}, we consider the following compact group:
$$U_{2,N}=\left\{\begin{pmatrix}A&B\\-B&A\end{pmatrix}\in U_{2N}\Big|A,B\in M_N(\mathbb C)\right\}$$

We have then the following result, basically from \cite{bdu}:

\begin{proposition}
We have a morphism $C(U_N^{**})\to M_2(C(U_{2,N}))$, given by
$$u_{ij}\to\begin{pmatrix}0&a_{ij}\\ \bar{a}_{ij}&0\end{pmatrix}+i\begin{pmatrix}0&b_{ij}\\ \bar{b}_{ij}&0\end{pmatrix}$$
where $a_{ij},b_{ij}$ denote the standard coordinates on $U_{2,N}$.
\end{proposition}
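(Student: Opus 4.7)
My plan is to verify that the proposed assignment respects each of the defining relations of $C(U_N^{**})$. These split into two families: first, that $U=(U_{ij})$ and its transpose $U^t$ are unitary in $M_N(M_2(C(U_{2,N})))$; second, that the half-commutation $abc=cba$ holds for any $a,b,c\in\{U_{ij},U_{ij}^*\}$. I would handle the second family first, since it turns out to be automatic, and then address unitarity, which reduces to the defining relations of $U_{2,N}\subset U_{2N}$.

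For the half-commutation, the key observation is that both $U_{ij}$ and its adjoint $U_{ij}^*$ have the off-diagonal shape $\bigl(\begin{smallmatrix}0&\ast\\\ast&0\end{smallmatrix}\bigr)$ with entries in the \emph{commutative} algebra $C(U_{2,N})$. A one-line computation shows that the product of any three such matrices is again off-diagonal, with each non-zero entry equal to an ordinary product of three scalar entries, one drawn from each factor. Since $C(U_{2,N})$ is commutative, reversing the order of the factors leaves these products unchanged, so $abc=cba$ holds for all eight choices of exponents, without any further work.

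For unitarity, I would flatten $U$ to a $2N\times 2N$ scalar matrix. After reordering so that the two ``layers'' of the $M_2$-valued entries form $N$-blocks, $U$ takes the form
$$U=\begin{pmatrix}0 & A+iB \\ \bar A+i\bar B & 0\end{pmatrix},$$
where $A=(a_{ij})$, $B=(b_{ij})$. Then $UU^*$ is block-diagonal, with northwest block $(A+iB)(A+iB)^* = AA^*+BB^*+i(BA^*-AB^*)$ after expanding and using $(iM)^*=-iM^*$. The unitarity of $\bigl(\begin{smallmatrix}A&B\\-B&A\end{smallmatrix}\bigr)\in U_{2N}$ gives exactly the two identities $AA^*+BB^*=I_N$ and $AB^*=BA^*$, together with their duals $A^*A+B^*B=I_N$ and $A^*B=B^*A$ coming from $M^*M=I$. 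These four identities reduce the northwest block to $I_N$; the southeast block, and the analogous computations for $U^*U$, $U^t(U^t)^*$ and $(U^t)^*U^t$, each reduce in the same way, possibly after first conjugating or transposing.

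The only mildly delicate step is the bookkeeping of indices when identifying the $2N\times 2N$ flattening of $U$ with the claimed block matrix—that is, matching the off-diagonal position of the block $A+iB$ with the parametrization of $U_{2,N}$. Once this identification is set up correctly, everything is mechanical, and the entire proof consists of the commutativity remark above plus a substitution of the four $U_{2,N}$ relations.
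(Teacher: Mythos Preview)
Your proposal is correct and follows essentially the same route as the paper: the paper also writes $w=\begin{pmatrix}0&A+iB\\ \bar A+i\bar B&0\end{pmatrix}$, derives the relations $AA^*+BB^*=1$, $AB^*=BA^*$ and their transposed/conjugated companions from $\begin{pmatrix}A&B\\-B&A\end{pmatrix}\in U_{2N}$, and checks $ww^*=w^*w=w^t\bar w=\bar ww^t=1$ directly; for the half-commutation it invokes the fact that the off-diagonal matrices $a_{ij}',b_{ij}'$ half-commute, which is exactly your observation that triple products of antidiagonal $2\times2$ matrices over a commutative algebra satisfy $abc=cba$. The only place to be slightly more explicit than you are is the southeast block of $UU^*$ and the analogous blocks for $U^t\bar U$, which really do require the conjugated/transposed identities $\bar AA^t+\bar BB^t=1$, $\bar AB^t=\bar BA^t$ etc.; your parenthetical ``possibly after first conjugating or transposing'' is correct, but it is worth spelling out that these extra identities come from $M^t\bar M=\bar MM^t=1$, i.e.\ from the unitarity of $M^t$.
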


\begin{proof}
The group elements $U\in U_{2,N}$, written $U=(^{\ A}_{-B}{\ }^B_A)$ as above, satisfy the relations $UU^*=U^*U=U^t\bar{U}=\bar{U}U^t=1$, and we deduce that the matrices $A,B$ satisfy:
$$AA^*+BB^*=A^*A+B^*B=A^t\bar{A}+B^t\bar{B}=\bar{A}A^t+\bar{B}B^t=1$$
$$AB^*=BA^*,\ A^*B=B^*A,\ A^t\bar{B}=B^t\bar{A},\ \bar{A}B^t=\bar{B}A^t$$

Consider now the target elements $w_{ij}=a_{ij}'+ib_{ij}'$ appearing in the statement. The matrix $w=(w_{ij})$ that they form, and its adjoint, are then given by:
$$w=\begin{pmatrix}0&A+iB\\ \bar{A}+i\bar{B}&0\end{pmatrix}\qquad 
w^*=\begin{pmatrix}0&A^t-iB^t\\ A^*-iB^*&0\end{pmatrix}$$

Also, the transpose of this matrix, and its complex conjugate, are given by:
$$w^t=\begin{pmatrix}0&A^*+iB^*\\ A^t+iB^t&0\end{pmatrix}\qquad 
\bar{w}=\begin{pmatrix}0&\bar{A}-i\bar{B}\\ A-iB&0\end{pmatrix}$$

By using now the above formulae relating $A,B$, we obtain:
$$ww^*=w^*w=w^t\bar{w}=\bar{w}w^t=1$$

Thus, we have obtained a morphism of algebras $C(U_N^+)\to M_2(C(U_{2,N}))$. 

Now since the $2\times2$ matrices $a_{ij}',b_{ij}'$ half-commute, so do the elements $w_{ij},w_{ij}^*$, and so our morphism factorizes through the algebra $C(U_N^{**})$, as claimed.
\end{proof}

With the above result in hand, we can suitably modify the ``complex doubling'' operation $X\to[X]$ constructed in section 2 above, as follows:

\begin{definition}
Given $X\subset U_{2,N}$, we define $[[X]]\subset U_N^{**}$ by stating that $C([[X]])$ is the image of the representation $C(U_N^{**})\to M_2(C(X))$, given by $u_{ij}\to a_{ij}'+ib_{ij}'$.
\end{definition} 

In other words, our construction is defined by the following diagram:
$$\xymatrix@R=15mm@C=15mm{
C(U_N^{**})\ar@.[d]\ar[r]&M_2(C(U_{2,N}))\ar[d]\\
C([[X]])\ar@.[r]&M_2(C(X))
}$$

As an example here, the results in \cite{bdu} show that we have $[[U_{2,N}]]=U_N^{**}$.

We can now formulate an analogue of Theorem 2.4 above, as follows:

\begin{theorem}
We have inclusions of noncommutative spaces 
$$\xymatrix@R=15mm@C=15mm{
[[\mathbb TO_{2,N}]]\ar[r]&[[U_{2,N}]]\\
[[\mathbb T^2O_N]]\ar[u]\ar[r]&[[U_N']]\ar[u]}\qquad
\xymatrix@R=8mm@C=15mm{\\ \longrightarrow\\}\qquad
\xymatrix@R=16mm@C=16mm{
U_N\ar[r]&U_N^{**}\\
\mathbb TO_N\ar[u]\ar[r]&U_N^\circ\ar[u]}$$
with $\mathbb T^2O_N$ and $U_N'$ being certain closed subgroups of $U_{2,N}$.
\end{theorem}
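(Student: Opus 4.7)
The plan is to follow the strategy of Theorem 2.4, using Proposition 3.5 as the starting point. The morphism $\pi : C(U_N^{**}) \to M_2(C(U_{2,N}))$ of Proposition 3.5, which sends $u_{ij} \mapsto a_{ij}' + i b_{ij}'$ with $w' = \begin{pmatrix} 0 & w \\ \bar{w} & 0 \end{pmatrix}$, already provides the model for the top-right corner. My task is therefore to define the three remaining subgroups $\mathbb TO_{2,N}, U_N', \mathbb T^2 O_N \subset U_{2,N}$ so that the composition of $\pi$ with the restriction morphism $M_2(C(U_{2,N})) \to M_2(C(X))$ factors through the appropriate quotient of $C(U_N^{**})$.

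I would first handle the bottom-right model, for $U_N^\circ$. The only extra relations to impose beyond those of $U_N^{**}$ are the $U_N^\#$ relations $u_{ij}u_{kl}^* = u_{kl}u_{ij}^*$ and $u_{ij}^* u_{kl} = u_{kl}^* u_{ij}$. Using the $2\times 2$ identity $w_1'w_2' = \begin{pmatrix} w_1\bar{w}_2 & 0 \\ 0 & \bar{w}_1 w_2 \end{pmatrix}$ already exploited in section 2, I would expand these relations in terms of the matrix entries $a_{ij}, b_{ij}$ and separate real and imaginary parts on each diagonal. This should yield, entry by entry, the same shape of equations as those defining $\ddot S^{2N-1}_\mathbb C$ in Theorem 2.4, with the double-indexed variables $a_{ij}, b_{ij}$ now replacing $x_i, y_i$. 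The subgroup $U_N' \subset U_{2,N}$ is then defined to be cut out by precisely these polynomial equations, and the required factorization $C(U_N^\circ) \to M_2(C(U_N'))$ holds by construction.

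Next, for the top-left corner modelling $U_N$, the images must be fully commutative. Imposing commutativity among the matrices $a_{ij}', a_{kl}', b_{ij}', b_{kl}'$ yields, via the same identity, the conditions $a_{ij}\bar a_{kl}, b_{ij}\bar b_{kl}, a_{ij}\bar b_{kl} \in \mathbb R$ for all $i,j,k,l$. As in step 3 of the proof of Theorem 2.4, these conditions force all entries of $A$ and $B$ to lie on a common real line through the origin, i.e.\ $(A,B) = u(A_0, B_0)$ for some $u \in \mathbb T$ and real matrices $A_0, B_0$. I would take $\mathbb TO_{2,N}$ to be this subset of $U_{2,N}$, which is naturally the free complexification of the real subgroup $O_{2,N} = U_{2,N} \cap M_{2N}(\mathbb R)$. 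Finally, I would set $\mathbb T^2 O_N = \mathbb TO_{2,N} \cap U_N'$ and verify, along the lines of step 4 of Theorem 2.4, that imposing the $U_N'$ equations on such a point collapses the $O_{2,N}$ factor into a genuine $O_N$ factor accompanied by one additional real rotation parameter, yielding $\mathbb T^2 O_N \simeq \mathbb T \times \mathbb T \times O_N$ and justifying the notation.

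The main obstacle I anticipate is precisely this last identification: after writing a point of $\mathbb TO_{2,N}$ as $u(A_0, B_0)$ with $A_0, B_0$ real, one must carefully unwind the $U_N'$ equations and check that they force $A_0$ and $B_0$ to be proportional, via a real planar rotation, to a common orthogonal matrix. The first three identifications are essentially bookkeeping with $2 \times 2$ matrix computations, but the passage from $\mathbb TO_{2,N} \cap U_N'$ to the explicit form $\mathbb T \times \mathbb T \times O_N$ is where the argument really has to do work, as in the analogous step 4 of Theorem 2.4. Once the four subgroup identifications are in place, compatibility of the four resulting model morphisms with the inclusions in the quantum-group diagram is automatic from the construction.
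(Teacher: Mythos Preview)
Your identification of the three parameter spaces follows the paper's route closely and is essentially correct. However, you have overlooked the main new content of this theorem relative to Theorem 2.4: the statement asserts that $U_N'$ and $\mathbb T^2O_N$ are closed \emph{subgroups} of $U_{2,N}$, and this must be proved. You simply write ``the subgroup $U_N'\subset U_{2,N}$ is then defined to be cut out by precisely these polynomial equations,'' but the equations you obtain, namely $a_{ij}\bar a_{kl}+b_{ij}\bar b_{kl}\in\mathbb R$ and $a_{ij}\bar b_{kl}-b_{ij}\bar a_{kl}\in i\mathbb R$, are not visibly stable under matrix multiplication in $U_{2,N}$. The paper spends the bulk of its proof on exactly this verification: after writing out the block product $\left(\begin{smallmatrix}A&B\\-B&A\end{smallmatrix}\right)\left(\begin{smallmatrix}C&D\\-D&C\end{smallmatrix}\right)$, one must expand the two defining quantities for the product entries and regroup them as sums of products of the form (real)(real)$+(i\mathbb R)(i\mathbb R)$ and (real)$(i\mathbb R)+(i\mathbb R)$(real), respectively. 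This is the step where the argument genuinely goes beyond Theorem 2.4, since in the sphere setting no group structure was claimed.

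By contrast, the step you flag as ``the main obstacle'' --- the explicit description of $\mathbb T^2O_N$ --- is handled in the paper in one line, exactly as in step 4 of Theorem 2.4, and once that explicit form is in hand the group property is evident. A minor terminological point: $\mathbb TO_{2,N}$ is not a free complexification (a noncommutative construction) but simply the classical group $\{zV:z\in\mathbb T,\,V\in O_{2,N}\}$ with $O_{2,N}=U_{2,N}\cap O_{2N}$.
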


\begin{proof}
We follow the method in the proof of Theorem 2.4. The computations there apply to the present situation, with a $2N$ rescaling factor for the spheres, and we obtain that the ``parameter spaces'' for the quantum groups $G=U_N,U_N^\circ,\mathbb TO_N$, i.e. the biggest closed subspaces $X\subset U_{2,N}$ producing embeddings $[[X]]\subset G$, are as follows:
\begin{eqnarray*}
U_N&\to&U_{2,N}\cap 2N\cdot\mathbb TS^{4N^2-1}_\mathbb R\\
U_N^\circ&\to&U_{2,N}\cap 2N\cdot\ddot{S}^{4N^2-1}_\mathbb C\\
\mathbb TO_N&\to&U_{2,N}\cap 2N\cdot\mathbb T^2S^{2N^2-1}_\mathbb R
\end{eqnarray*}

We will compute these three spaces, and then show that they are indeed groups.

{\bf 1.} We first compute the parameter space for $U_N$. We know that a matrix $U\in U_{2,N}$ belongs to this space precisely when there exists $z\in\mathbb T$ such that $V=zU$ is real. Thus $V$ must belong to the group $O_{2,N}=U_{2,N}\cap O_{2N}$, and the parameter space is:
$$\mathbb TO_{2,N}=\left\{z\begin{pmatrix}A&B\\-B&A\end{pmatrix}\in U_{2N}\Big|z\in\mathbb T,A,B\in M_N(\mathbb R)\right\}$$

{\bf 2.} Regarding now the parameter space for $U_N^\circ$, this appears from $U_{2,N}$ via the defining relations for $\ddot{S}^{4N^2-1}_\mathbb C$, from section 2 above, which are as follows:
$$\begin{cases}
a_{ij}\bar{a}_{kl}+b_{ij}\bar{b}_{kl}\in\mathbb R\\
a_{ij}\bar{b}_{kl}-b_{ij}\bar{a}_{kl}\in i\mathbb R
\end{cases}$$

{\bf 3.} Finally, the parameter space for $\mathbb TO_N$ is best obtained by intersecting the parameter spaces for $U_N,U_N^\circ$. Indeed, let us pick a matrix $U\in\mathbb TO_{2,N}$, written $U=z(^{\ A}_{-B}{\ }^B_A)$ as above. Then $U$ belongs to the parameter space for $\mathbb TO_N$ when its entries $\widetilde{a}_{ij}=za_{ij},\widetilde{b}_{ij}=zb_{ij}$ satisfy the above two equations. As in the sphere case, the variable $z\in\mathbb T$ cancels, and the first equation is automatic, and the second equation reads $a_{ij}b_{kl}=b_{ij}a_{kl}$. We therefore conclude, as in the sphere case, that the parameter space for $\mathbb TO_N$ is:
$$\mathbb T^2O_N=\left\{z\begin{pmatrix}cA&sA\\-sA&cA\end{pmatrix}\Big|z\in\mathbb T,\begin{pmatrix}c&s\\-s&c\end{pmatrix}\in SO_2\simeq\mathbb T,A\in O_N\right\}$$

{\bf 4.} We are left with checking that the parameter spaces are indeed groups. Since this is clear for $\mathbb TO_{2,N},\mathbb T^2O_N$, it remains to verify that the following space is a group:
$$U_N'=\left\{\begin{pmatrix}A&B\\-B&A\end{pmatrix}\in U_{2N}\Big|\ \begin{matrix}a_{ij}\bar{a}_{kl}+b_{ij}\bar{b}_{kl}\in\mathbb R\\
a_{ij}\bar{b}_{kl}-b_{ij}\bar{a}_{kl}\in i\mathbb R\end{matrix}\right\}$$

We have $1\in U_N'$, and $U\in U_N'\implies U^*\in U_N'$ is clear as well, because at the level of coordinates, the passage $U\to U^*$ is given by $(a_{ij},b_{ij})\to(\bar{a}_{ji},-\bar{b}_{ji})$, and this transformation preserves the solutions of the defining equations for $U_N'$.

Regarding now the multiplication axiom, we use the following formula:
$$\begin{pmatrix}A&B\\-B&A\end{pmatrix}\begin{pmatrix}C&D\\-D&C\end{pmatrix}=\begin{pmatrix}AC-BD&AD+BC\\-AD-BC&AC-BD\end{pmatrix}$$

Assuming now that the two matrices on the left belong to $U_N'$, we have:
\begin{eqnarray*}
&&(AC-BD)_{ij}\overline{(AC-BD)}_{kl}+(AD+BC)_{ij}\overline{(AD+BC)}_{kl}\\
&=&\sum_{pq}(a_{ip}c_{pj}-b_{ip}d_{pj})(\bar{a}_{kq}\bar{c}_{ql}-\bar{b}_{kq}\bar{d}_{ql})+(a_{ip}d_{pj}+b_{ip}c_{pj})(\bar{a}_{kq}\bar{d}_{ql}-\bar{b}_{kq}\bar{c}_{ql})\\
&=&\sum_{pq}(a_{ip}\bar{a}_{kq}+b_{ip}\bar{b}_{kq})(c_{pj}\bar{c}_{ql}+d_{pj}\bar{d}_{ql})+(a_{ip}\bar{b}_{kq}-b_{ip}\bar{a}_{kq})(d_{pj}\bar{c}_{ql}-c_{pj}\bar{d}_{ql})
\end{eqnarray*}

Now since the above 4 quantities are respectively in $\mathbb R,\mathbb R,i\mathbb R,i\mathbb R$, the summand is real, and hence the whole sum is real as well. Thus, we have checked the first equations.

For the second equations, the proof is similar. We have indeed: 
\begin{eqnarray*}
&&(AC-BD)_{ij}\overline{(AD+BC)}_{kl}-(AD+BC)_{ij}\overline{(AC-BD)}_{kl}\\
&=&\sum_{pq}(a_{ip}c_{pj}-b_{ip}d_{pj})(\bar{a}_{kq}\bar{d}_{ql}+\bar{b}_{kq}\bar{c}_{ql})-(a_{ip}d_{pj}+b_{ip}c_{pj})(\bar{a}_{kq}\bar{c}_{ql}-\bar{b}_{kq}\bar{d}_{ql})\\
&=&\sum_{pq}(a_{ip}\bar{a}_{kq}+b_{ip}\bar{b}_{kq})(c_{pj}\bar{d}_{ql}-d_{pj}\bar{c}_{ql})+(a_{ip}\bar{b}_{kq}-b_{ip}\bar{a}_{kq})(c_{pj}\bar{c}_{ql}-d_{pj}\bar{d}_{ql})
\end{eqnarray*}

Now the quantities which appear are respectively in $\mathbb R,i\mathbb R,i\mathbb R,\mathbb R$, so the summand is imaginary, and hence the whole sum is imaginary as well, and we are done.
\end{proof}

\section{Affine isometries}

In this section we show that the 6 quantum groups introduced above appear as affine quantum isometry groups of the 6 spheres, and we deduce some consequences.

We use the following formalism, inspired from \cite{gos}:

\begin{definition}
We say that $G\subset U_N^+$ acts affinely on $X\subset S^{N-1}_{\mathbb C,+}$ when
$$z_i\to\sum_au_{ia}\otimes z_a$$
defines a morphism of algebras $\Phi:C(X)\to C(G)\otimes C(X)$.
\end{definition}

Observe that such a morphism $\Phi$ is automatically coassociative and counital, in the sense that we have $(id\otimes\Phi)\Phi=(\Delta\otimes id)\Phi$ and $(\varepsilon\otimes id)\Phi=id$. Thus, we have a coaction, in the usual sense. The basic example is $U_N\curvearrowright S^{N-1}_\mathbb C$, via $\Phi(f)(U,x)=f(Ux)$. 

We agree to denote the 6 half-liberated quantum groups by $U_N^\times$, and the corresponding 6 half-liberated spheres by $S^{N-1}_\times$. First, we have the following result:

\begin{proposition}
We have an affine action $U_N^\times\curvearrowright S^{N-1}_\times$.
\end{proposition}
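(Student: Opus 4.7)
The plan is a direct relation-by-relation verification for each of the six pairs $(U_N^\times, S^{N-1}_\times)$. By the universal property of $C(S^{N-1}_\times)$, constructing the $*$-homomorphism $\Phi \colon C(S^{N-1}_\times) \to C(U_N^\times) \otimes C(S^{N-1}_\times)$ amounts to showing that the elements $w_i := \sum_a u_{ia} \otimes z_a$ satisfy every defining relation of $S^{N-1}_\times$.

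I would first dispose of the sphere axioms $\sum_i w_i w_i^* = \sum_i w_i^* w_i = 1$, which reduce to the unitarity of $u$ and $u^t$ in $C(U_N^+) \supset C(U_N^\times)$ combined with the normalization $\sum_a z_a z_a^* = \sum_a z_a^* z_a = 1$ in $C(S^{N-1}_\times)$.

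The heart of the argument is a uniform template for the half-liberation relations, illustrated for $U_N^* \curvearrowright S^{N-1}_{\mathbb C,*}$. Expanding,
$$w_i w_j^* w_k = \sum_{abc} u_{ia} u_{jb}^* u_{kc} \otimes z_a z_b^* z_c.$$
The identity $u_{ia} u_{jb}^* u_{kc} = u_{kc} u_{jb}^* u_{ia}$ holds in $C(U_N^*)$ by its very definition (Proposition 3.2), so applying it and relabeling $a \leftrightarrow c$ yields
$$w_i w_j^* w_k = \sum_{abc} u_{ka} u_{jb}^* u_{ic} \otimes z_c z_b^* z_a,$$
after which the parallel identity $z_c z_b^* z_a = z_a z_b^* z_c$ in $C(S^{N-1}_{\mathbb C,*})$ produces $w_k w_j^* w_i$, as required. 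The same double-use template (once on the $u$'s, once on the $z$'s, with a relabeling in between) handles $abc = cba$ on mixed monomials for $U_N^{**} \curvearrowright S^{N-1}_{\mathbb C,**}$, the pair $ab^* = ba^*$, $a^*b = b^*a$ for $U_N^\# \curvearrowright S^{N-1}_{\mathbb C,\#}$, and ordinary commutativity for $U_N \curvearrowright S^{N-1}_\mathbb C$. The $\circ$ case is obtained by intersecting the $**$ and $\#$ verifications, and $\mathbb TO_N \curvearrowright \mathbb TS^{N-1}_\mathbb R$ is a classical action of compact groups, visibly given by $(uO)\cdot(vx) = (uv)(Ox)$.

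I do not anticipate a real obstacle; the argument is mechanical, its only content being that the same word relation is used twice in succession. The conceptual point worth flagging is that this template works precisely because $U_N^\times$ was defined in Proposition 3.2 by imposing the very same word relations on its coordinates $u_{ij}$ that define $S^{N-1}_\times$ via the $z_i$ — so the $6 \times 6$ matching of quantum groups to spheres is not an observation but is built into the construction, which is exactly why the coaction exists at all.
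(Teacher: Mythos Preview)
Your proposal is correct and follows essentially the same approach as the paper: both verify directly that the elements $\sum_a u_{ia}\otimes z_a$ satisfy the defining relations of $S^{N-1}_\times$, using biunitarity for the sphere axioms and then the matching word relation on the $u$'s and on the $z$'s for each of the half-liberation types. The only cosmetic difference is that the paper applies both relations in one step (e.g.\ $\sum u_{ia}u_{jb}^*u_{kc}\otimes z_az_b^*z_c=\sum u_{kc}u_{jb}^*u_{ia}\otimes z_cz_b^*z_a$) rather than relabeling in between, but the content is identical.
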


\begin{proof}
We must prove that the formula in Definition 4.1 defines a morphism of algebras $C(S^{N-1}_\times)\to C(U_N^\times)\otimes C(S^{N-1}_\times)$. For this purpose, we just have to show that the elements $Z_i=\sum_au_{ia}\otimes z_a$ satisfy the defining relations for $S^{N-1}_\times$. 

As a first observation, the quadratic relations $\sum_iZ_iZ_i^*=\sum_iZ_i^*Z_i=1$ follow from the biunitarity of $u$. For the remaining relations, we perform a case-by case analysis.

\underline{$S^{N-1}_{\mathbb C,**},S^{N-1}_{\mathbb C,*}$}. For $S^{N-1}_{\mathbb C,*}$ we have indeed the following computation:
$$Z_iZ_j^*Z_k=\sum_{abc}u_{ia}u_{jb}^*u_{kc}\otimes z_az_b^*z_c=\sum_{abc}u_{kc}u_{jb}^*u_{ia}\otimes z_cz_b^*z_a=Z_kZ_j^*Z_i$$

For $S^{N-1}_{\mathbb C,**}$ the proof is similar, by removing all the $*$ exponents.

\underline{$S^{N-1}_{\mathbb C,\circ},S^{N-1}_{\mathbb C,\#}$}. It is enough to do the verification for $S^{N-1}_{\mathbb C,\#}$, and here we have:
$$Z_iZ_j^*=\sum_{ab}u_{ia}u_{jb}^*\otimes z_az_b^*=\sum_{ab}u_{jb}u_{ia}^*\otimes z_bz_a^*=Z_jZ_i^*$$

The proof of $Z_i^*Z_j=Z_j^*Z_i$ is similar, by moving the $*$ exponents on the left.

\underline{$\mathbb TS^{N-1}_\mathbb R,S^{N-1}_\mathbb C$}. It is enough to do the verification for $S^{N-1}_\mathbb C$. But the result here is clear, because $U_N$ is known to act on $S^{N-1}_\mathbb C$, with coaction map as in the statement.
\end{proof}

We will prove now that the actions in Proposition 4.2 are universal. For this purpose, we use an old 3-step method from \cite{bhg}, where the result was established for $S^{N-1}_\mathbb C$. The idea is to: (1) establish linear independence results for the products of coordinates, (2) deduce from this the precise conditions on $G\subset U_N^+$ which allow an action, and (3) solve the quantum group question left, by using an antipode/relabel trick.

In our case, the linear independence lemma that we will need is:

\begin{lemma}
The following variables are linearly independent:
\begin{enumerate}
\item $\{z_az_b^*|1\leq a\leq b\leq N\}$, over $S^{N-1}_{\mathbb C,\circ}$.

\item $\{z_az_bz_c|1\leq a\leq c\leq N,1\leq b\leq N\}$, over $S^{N-1}_{\mathbb C,\circ}$.

\item $\{z_az_b^*z_c|1\leq a\leq c\leq N,1\leq b\leq N\}$, over $S^{N-1}_{\mathbb C,**}$.
\end{enumerate}
\end{lemma}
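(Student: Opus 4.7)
My strategy is, for each of the three items, to push a hypothetical vanishing linear combination down to a commutative quotient of $C(S^{N-1}_{\mathbb C,\times})$ where the resulting identity among monomials on a classical manifold can be settled by an elementary degree argument.

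For (1), the quotient $C(S^{N-1}_{\mathbb C,\circ})\to C(\mathbb TS^{N-1}_\mathbb R)$ coming from the inclusion in Proposition 1.3 sends $z_az_b^*$ to $(ux_a)(x_bu^*)=x_ax_b$, and the claim reduces to linear independence of $\{x_ax_b:a\le b\}$ on $S^{N-1}_\mathbb R$: any pure degree-$2$ polynomial lying in $\langle\sum_ix_i^2-1\rangle$ must be a scalar multiple of $\sum_ix_i^2-1$, and the absence of a constant term forces that scalar to vanish. For (3) the same principle applies via the classical inclusion $S^{N-1}_\mathbb C\subset S^{N-1}_{\mathbb C,**}$: the relation descends to $\sum\lambda_{abc}z_az_c\bar z_b=0$ on the classical complex sphere, and linear independence of these bidegree-$(2,1)$ commutative monomials follows from a bihomogeneous analysis of the ideal $\langle\sum_iz_i\bar z_i-1\rangle\subset\mathbb C[z_i,\bar z_i]$, showing that any pure bidegree-$(2,1)$ element of the ideal would force its polynomial cofactor to have infinitely many nonzero bihomogeneous components.

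For (2) the classical pushdown is unavailable since $S^{N-1}_\mathbb C\not\subset S^{N-1}_{\mathbb C,\circ}$, and the pushdown through $\mathbb TS^{N-1}_\mathbb R$ collapses $z_az_bz_c$ to the totally-symmetric monomial $u^3x_ax_bx_c$, losing the information needed to distinguish coefficients with different middle indices. I will therefore pass through the $2\times 2$ matrix model of Theorem 2.4,
$$C(S^{N-1}_{\mathbb C,\circ})\longrightarrow M_2\bigl(C(\ddot S^{2N-1}_\mathbb C)\bigr),\qquad z_i\longmapsto\begin{pmatrix}0&\alpha_i\\ \beta_i&0\end{pmatrix},$$
where $\alpha_i=x_i+iy_i$ and $\beta_i=\bar x_i+i\bar y_i$. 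A direct computation shows that the image of $z_az_bz_c$ is the $2\times 2$ matrix with $\alpha_a\beta_b\alpha_c$ in the top-right and $\beta_a\alpha_b\beta_c$ in the bottom-left, so the problem reduces to the linear independence of $\{\alpha_a\beta_b\alpha_c:a\le c,\ b\}$ as ordinary functions on $\ddot S^{2N-1}_\mathbb C$. I will establish this by evaluating at a family of tailored test points: at a ``middle-$b$'' point with $y_b=ix_b$ and $y_j=-ix_j$ for $j\ne b$, the function $\alpha$ vanishes at $b$ while $\beta$ vanishes off $b$, so the relation collapses to a pure degree-$2$ polynomial identity in a real unit vector indexed by $\{1,\ldots,N\}\setminus\{b\}$, and the degree argument from (1) yields $\lambda_{a,b,c}=0$ for all $a,c\ne b$; two-index test points concentrated on pairs $\{a,b\}$ or $\{b,c\}$ then isolate the remaining boundary coefficients $\lambda_{a,b,a}$, $\lambda_{b,b,c}$ and $\lambda_{a,b,b}$ one at a time.

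The main technical obstacle is in (2) and consists of verifying that the proposed test points genuinely lie in $\ddot S^{2N-1}_\mathbb C$, i.e.\ satisfy all four defining conditions of Theorem 2.4. The condition $\sum_ix_i\bar y_i\in\mathbb R$ is the binding one and dictates specific magnitudes for the nonzero components of $(x,y)$, whereas the remaining conditions $x_i\bar x_j+y_i\bar y_j\in\mathbb R$ and $x_i\bar y_j-y_i\bar x_j\in i\mathbb R$ become automatic once the supports of $x$ and $y$ are arranged as described. Parts (1) and (3) are routine once the appropriate model is in hand.
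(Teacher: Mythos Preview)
Your overall strategy is sound and the argument is essentially correct, but parts (2) and (3) diverge from the paper in opposite directions.

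For part (3) you take a shortcut that the paper does not: you push down directly through the classical inclusion $S^{N-1}_\mathbb C\subset S^{N-1}_{\mathbb C,**}$ and argue bihomogeneously in the ideal of the classical sphere. The paper instead routes through the matrix model of Theorem~2.4, restricting the parameter space $\dot S^{2N-1}_\mathbb C$ to the real locus $S^{2N-1}_\mathbb R\simeq S^{N-1}_\mathbb C$, which ultimately lands on the \emph{same} linear independence question $\{w_a\bar w_bw_c:a\le c\}$ over $S^{N-1}_\mathbb C$. Your direct route is cleaner here.

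For part (2) both you and the paper use the matrix model $C(S^{N-1}_{\mathbb C,\circ})\to M_2(C(\ddot S^{2N-1}_\mathbb C))$, but the paper finds a single submanifold that settles everything at once: the map
\[
(S^{N-1}_\mathbb R)^2\hookrightarrow\ddot S^{2N-1}_\mathbb C,\qquad (p,q)\longmapsto\Bigl(\tfrac{p+q}{2},\,\tfrac{p-q}{2i}\Bigr),
\]
under which your $\alpha_i$ becomes $p_i$ and $\beta_i$ becomes $q_i$. The problem then reduces to the linear independence of $\{p_ap_c\,q_b:a\le c\}$ on a \emph{product} of real spheres, which is immediate since the $q_b$ and the $p_ap_c$ factor. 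Your ``middle-$b$'' test points are exactly the special case $q=e_b$, $p_b=0$ of this parametrisation; because you impose $p_b=0$ you lose the coefficients with $a=b$ or $c=b$ and must chase them separately. Incidentally, your list of ``remaining boundary coefficients'' includes $\lambda_{a,b,a}$ with $a\ne b$, which your first step already kills; what actually remains is $\lambda_{b,b,c}$ and $\lambda_{a,b,b}$, and your two-index cleanup, while workable, is the part of the argument that is least spelled out. Replacing your ad-hoc test points by the $(S^{N-1}_\mathbb R)^2$ embedding removes the case analysis entirely.
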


\begin{proof}
This follows by using various $2\times2$ matrix models for the spheres:

(1) Here we can use the isomorphism $P^{N-1}_{\mathbb C,\circ}\simeq P^N_\mathbb R$ given by $p_{ab}=z_az_b^*$. Indeed, since the variables $\{p_{ab}|a\leq b\}$ are linearly independent over $P^N_\mathbb R$, this gives the result.

(2) We use here the model $z=x'+iy'$, with $(x,y)\in\ddot{S}^{2N-1}_\mathbb C$, found in Theorem 2.4 above. Our first claim is that we have an inclusion, as follows:
$$(S^{N-1}_\mathbb R)^2\subset\ddot{S}^{2N-1}_\mathbb C,\qquad (p,q)\to\left(\frac{p+q}{2},\frac{p-q}{2i}\right)$$

Indeed, since for $p,q\in S^{N-1}_\mathbb R$ we have $\sum_i\left(\frac{p_i+q_i}{2}\right)^2+\left(\frac{p_i-q_i}{2}\right)^2=1$, we obtain an embedding $(S^{N-1}_\mathbb R)^2\subset S^{2N-1}_\mathbb C$. Moreover, since $\sum_i\frac{p_i+q_i}{2}\cdot\frac{p_i-q_i}{2}=0$, we have in fact $(S^{N-1}_\mathbb R)^2\subset\dot{S}^{2N-1}_\mathbb C$, and finally the defining relations for $\ddot{S}^{2N-1}_\mathbb C$ are both trivially satisfied.

When restricting the parameter space to $(S^{N-1}_\mathbb R)^2$, the model becomes:
$$z_i=\frac{1}{2}\begin{pmatrix}0&p_i+q_i\\ p_i+q_i&0\end{pmatrix}+\frac{1}{2}\begin{pmatrix}0&p_i-q_i\\ q_i-p_i&0\end{pmatrix}
=\begin{pmatrix}0&p_i\\ q_i&0\end{pmatrix}$$

Observe now that we have the following formula:
$$z_iz_jz_k=\begin{pmatrix}0&p_i\\ q_i&0\end{pmatrix}\begin{pmatrix}0&p_j\\ q_j&0\end{pmatrix}\begin{pmatrix}0&p_k\\ q_k&0\end{pmatrix}=\begin{pmatrix}0&p_iq_jp_k\\ q_ip_jq_k&0\end{pmatrix}$$

Now since the variables $\{p_iq_jp_k|i\leq k\}$ on the right are linearly independent over $(S^{N-1}_\mathbb R)^2$, so are the $2\times 2$ matrices $\{z_iz_jz_k|i\leq k\}$, and this gives the result.

(3) Here we can use the model $z=x'+iy'$, with $(x,y)\in\dot{S}^{2N-1}_\mathbb C$, from Theorem 2.4, with the parameter space restricted to $S^{N-1}_\mathbb C\simeq S^{2N-1}_R\subset\dot{S}^{2N-1}_\mathbb R$. Indeed, if we denote by $w_i=x_i+iy_i$ the coordinates on $S^{N-1}_\mathbb C$, the matrix model formula becomes:
$$z_i=\begin{pmatrix}0&x_i+iy_i\\ x_i+iy_i&0\end{pmatrix}=\begin{pmatrix}0&w_i\\ w_i&0\end{pmatrix}$$

Now observe that we have the following formula:
$$z_iz_j^*z_k=\begin{pmatrix}0&w_i\\ w_i&0\end{pmatrix}\begin{pmatrix}0&\bar{w}_j\\ \bar{w}_j&0\end{pmatrix}\begin{pmatrix}0&w_k\\ w_k&0\end{pmatrix}=\begin{pmatrix}0&w_i\bar{w}_jw_k\\ w_i\bar{w}_jw_k&0\end{pmatrix}$$

Now since the variables $\{w_i\bar{w}_jw_k|i\leq k\}$ on the right are linearly independent over $S^{N-1}_\mathbb C$, so are the $2\times 2$ matrices $\{z_iz_j^*z_k|i\leq k\}$, and this gives the result.
\end{proof}

We will need as well, several times, the following lemma:

\begin{lemma}
If the standard coordinates $u_{ij}$ on a compact quantum group $G\subset U_N^*$ satisfy the relations $abc=cba$, then we have $G\subset U_N^{**}$.
\end{lemma}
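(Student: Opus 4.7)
The task is to verify that the relation $abc = cba$ holds for every choice of $a, b, c \in \{u_{ij}, u_{ij}^*\}$. Writing the exponent triple $(e_1, e_2, e_3) \in \{1, *\}^3$, this amounts to establishing $8$ patterns of relations. Four of them are immediate: the pattern $(1, 1, 1)$ is the hypothesis, the pattern $(1, *, 1)$ is the defining relation of $U_N^*$, and the patterns $(*, *, *)$ and $(*, 1, *)$ follow from these by taking adjoints. So we are reduced to handling the remaining four patterns $(1, 1, *),(1, *, *),(*, 1, 1),(*, *, 1)$.

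The first observation is that these four patterns are all equivalent to each other. Indeed, since each claimed identity is universal in $a, b, c \in \{u_{ij}\}$, interchanging the roles of $a$ and $c$ exchanges $(1, 1, *) \leftrightarrow (*, 1, 1)$ and $(1, *, *) \leftrightarrow (*, *, 1)$, while taking adjoints exchanges $(1, 1, *) \leftrightarrow (*, *, 1)$ and $(*, 1, 1) \leftrightarrow (1, *, *)$. Hence it is enough to establish a single pattern, say $(1, 1, *)$, that is:
$$a b\, u_{mn}^* = u_{mn}^*\, b a \qquad\text{for all } a, b \in \{u_{ij}\},\ m, n.$$

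To prove this, we set $Y_m = a b\, u_{mn}^* - u_{mn}^*\, b a$ and show that $\sum_m Y_m\, u_{mn'} = 0$ for every $n'$. One piece is $\sum_m a b\, u_{mn}^* u_{mn'} = a b\, \delta_{n n'}$, by the column orthogonality $u^* u = 1$. For the other piece, we apply the hypothesis $xyz = zyx$ to the three generators $b, a, u_{mn'}$, which gives $b a\, u_{mn'} = u_{mn'}\, a b$; hence $\sum_m u_{mn}^*\, b a\, u_{mn'} = \sum_m u_{mn}^* u_{mn'}\, a b = \delta_{n n'}\, a b$. The two contributions cancel. Now $\sum_m Y_m\, u_{mn'} = 0$ for every $n'$ implies $Y_{m'} = 0$ for every $m'$, by right-multiplying by $u_{m' n'}^*$ and summing over $n'$ using $(u u^*)_{m m'} = \delta_{m m'}$.

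The main obstacle is conceptual rather than technical: recognizing that the four unproven patterns are equivalent under the symmetries ``swap $a \leftrightarrow c$'' and ``take $*$,'' so that a single application of the standard biunitarity cancellation, driven by the universal half-commutation relation $xyz = zyx$, suffices.
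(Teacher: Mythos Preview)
Your proof is correct. The reduction of the eight exponent patterns to the single case $(1,1,*)$ via the symmetries ``swap $a\leftrightarrow c$'' and ``apply $*$'' is sound, and the biunitarity cancellation argument for $(1,1,*)$ is clean: you use the hypothesis $xyz=zyx$ with $(x,y,z)=(b,a,u_{mn'})$ to commute $ba$ past $u_{mn'}$, then the column relation $\sum_m u_{mn}^*u_{mn'}=\delta_{nn'}$ kills both terms, and finally the row relation $\sum_{n'} u_{mn'}u_{m'n'}^*=\delta_{mm'}$ recovers $Y_{m'}=0$.

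The paper's proof reaches the same conclusion by a different, more categorical route. It invokes the Tannakian/partition formalism for easy quantum groups: the relations $abc=cba$ and $ab^*c=cb^*a$ correspond to certain colored ``half-liberation'' partitions, and the paper observes that composing the half-liberation partition with a cap and a cup (the duality morphisms coming from biunitarity) produces the partition encoding $abc^*=c^*ba$. Your computation is exactly the decategorified version of that diagram composition: multiplying by $u_{mn'}$ and summing over $m$ is the cap, and multiplying by $u_{m'n'}^*$ and summing over $n'$ is the cup. What you gain is self-containment---no appeal to the easy quantum group machinery of \cite{bsp} or its unitary extension is needed---while the paper's version makes transparent \emph{why} the extra relation is forced, as a formal consequence of the category structure. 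Both arguments are short; yours is the more elementary.
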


\begin{proof}
We must prove that $abc=cba$ for any $a,b,c\in\{u_{ij},u_{ij}^*\}$, and by using the involution, it is enough to check that the following relations hold, for any $a,b,c\in\{u_{ij}\}$:
$$abc=cba,\qquad ab^*c=cb^*a,\qquad abc^*=c^*ba$$ 

The first two relations hold by assumption, and we must therefore deduce the third relations from them. For this purpose, we can use the diagrammatic formalism in \cite{bsp}, or rather its unitary extension, which applies to the easy quantum group $G\subset U_N^+$ coming from the first two relations. Indeed, in the Tannakian category of $G$, we have:
$$\xymatrix@R=10mm@C=5mm{\bullet\ar@{-}[d]\ar@/^/@{-}[r]&\circ\ar@{-}[drr]&\circ\ar@{-}[d]&\circ\ar@{-}[dll]&\bullet\ar@{-}[d]\\\bullet&\circ&\circ&\circ\ar@/_/@{-}[r]&\bullet}\quad\xymatrix@R=4mm@C=6mm{&\\ =\\&\\& }\xymatrix@R=10mm@C=6mm{\circ\ar@{-}[drr]&\circ\ar@{-}[d]&\bullet\ar@{-}[dll]\\\bullet&\circ&\circ}$$
\vskip-5mm

Thus the relations $abc=cba$ imply the relations $abc^*=c^*ba$, and we are done.
\end{proof}

Now back to the quantum isometries, and to the 3-step method from \cite{bhg}, Lemma 4.3 and Lemma 4.4 provide us with the first step. We will perform the second and third step altogether, first for $S^{N-1}_{\mathbb C,**}\subset S^{N-1}_{\mathbb C,*}$, and then for $S^{N-1}_{\mathbb C,\circ}\subset S^{N-1}_{\mathbb C,\#}$. First, we have:

\begin{proposition}
The affine actions of $U_N^{**},U_N^*$ on $S^{N-1}_{\mathbb C,**},S^{N-1}_{\mathbb C,*}$ are universal.
\end{proposition}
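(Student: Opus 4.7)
The plan is to carry out the three-step method from \cite{bhg}, using Lemma 4.3 for linear independence, the antipode $S(u_{ij})=u_{ji}^*$ to produce a second symmetry, and Lemma 4.4 for the final upgrade in the $**$-case. Suppose $G\subset U_N^+$ acts affinely on $S^{N-1}_{\mathbb C,*}$ via $Z_i=\sum_au_{ia}\otimes z_a$; I must show $G\subset U_N^*$, i.e.\ that $u_{ia}u_{jb}^*u_{kc}=u_{kc}u_{jb}^*u_{ia}$ holds identically. Expanding $Z_iZ_j^*Z_k=Z_kZ_j^*Z_i$ inside $C(G)\otimes C(S^{N-1}_{\mathbb C,*})$ and re-collecting on the basis $\{z_az_b^*z_c\mid a\leq c\}$ via $z_az_b^*z_c=z_cz_b^*z_a$, I invoke Lemma 4.3(3) (which, being valid over $C(S^{N-1}_{\mathbb C,**})$, pulls back along $C(S^{N-1}_{\mathbb C,*})\twoheadrightarrow C(S^{N-1}_{\mathbb C,**})$) to obtain, writing $T(\alpha,\beta,\gamma):=\alpha\beta\gamma-\gamma\beta\alpha$, the master relation
$$T(u_{ia},u_{jb}^*,u_{kc})+T(u_{ic},u_{jb}^*,u_{ka})=0$$
for all indices.

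The main obstacle is promoting this single ``second-index antisymmetry'' to the full vanishing of $T$. A direct computation using the anti-multiplicativity of $S$ and $S^2=\mathrm{id}$ yields
$$(\,*\circ S\,)\bigl(T(u_{ia},u_{jb}^*,u_{kc})\bigr)=T(u_{ai},u_{bj}^*,u_{ck}).$$
Applying $*\circ S$ to the master relation and then relabeling the dummy indices produces a companion identity expressing a ``first-index antisymmetry'':
$$T(u_{ia},u_{jb}^*,u_{kc})+T(u_{ka},u_{jb}^*,u_{ic})=0.$$
Subtracting the two identities, and using the intrinsic antisymmetry $T(u_{ic},u_{jb}^*,u_{ka})=-T(u_{ka},u_{jb}^*,u_{ic})$ in the outer arguments of $T$, gives $2T(u_{ka},u_{jb}^*,u_{ic})=0$, hence $T\equiv 0$, hence $G\subset U_N^*$.

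For the $S^{N-1}_{\mathbb C,**}$ case I apply the same strategy twice inside $C(G)\otimes C(S^{N-1}_{\mathbb C,**})$. The sub-relation $Z_iZ_j^*Z_k=Z_kZ_j^*Z_i$ holds there too and gives $G\subset U_N^*$ exactly as above. Separately, the non-starred relation $Z_iZ_jZ_k=Z_kZ_jZ_i$, combined with Lemma 4.3(2) (linear independence over $S^{N-1}_{\mathbb C,\circ}$, pulled back to $S^{N-1}_{\mathbb C,**}$) and the same antipode trick, yields $u_{ij}u_{kl}u_{mn}=u_{mn}u_{kl}u_{ij}$, i.e.\ the standard coordinates of $G$ satisfy $abc=cba$. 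Lemma 4.4 then upgrades $G\subset U_N^*$ to $G\subset U_N^{**}$, completing the proof.
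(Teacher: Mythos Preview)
Your proof is correct and follows the same overall three-step strategy as the paper (linear independence from Lemma 4.3, antipode/relabel, Lemma 4.4), but your execution of the middle step is considerably more efficient. The paper splits the outcome of the linear-independence argument into three families of relations, indexed by the coincidence pattern among $a,b,c$, and then spends about two pages processing these through a cascade of sub-cases (1a, 1b, 1b', 2', 1b'', 2'', $2^*$, $2^*$a, $2^*$b, \ldots). You instead observe that the basis $\{z_az_b^\times z_c\mid a\le c\}$ carries no constraint on $b$, so the comparison of coefficients yields a single master relation $T(u_{ia},u_{jb}^\times,u_{kc})+T(u_{ic},u_{jb}^\times,u_{ka})=0$ valid for \emph{all} indices. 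One application of $*\circ S$ (multiplicative, sending $u_{ij}\mapsto u_{ji}$ and $u_{ij}^*\mapsto u_{ji}^*$) plus a relabel then gives the companion identity with $(i,k)$ swapped in place of $(a,c)$; subtracting and using the intrinsic antisymmetry $T(\alpha,\beta,\gamma)=-T(\gamma,\beta,\alpha)$ forces $2T=0$. This collapses the paper's case analysis to three lines, and the $**$-case then follows exactly as you say, via Lemma 4.3(2) and Lemma 4.4. The paper's longer route does not yield any additional information beyond what your argument already gives.
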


\begin{proof}
This is a routine computation, based on the antipode/relabel trick in \cite{bhg}. Consider indeed a compact quantum group $G\subset U_N^+$, and let $Z_i=\sum_au_{ia}\otimes z_a$. With Lemma 4.4 in mind, let us fix as well a symbol $\times\in\{\emptyset,*\}$. We have then:
$$Z_iZ_j^\times Z_k=\sum_{abc}u_{ia}u_{jb}^\times u_{kc}\otimes z_az_b^\times z_c$$

Assuming now that the variables $z_1,\ldots,z_N$ are subject to the relations $z_az_b^\times z_c=z_cz_b^\times z_a$, some of the terms on the right coincide. By taking into account the various cases, and by merging these terms, we can write the above formula as follows:
\begin{eqnarray*}
Z_iZ_j^\times Z_k
&=&\sum_{a<c,b\neq a,c}(u_{ia}u_{jb}^\times u_{kc}+u_{ic}u_{jb}^\times u_{ka})\otimes z_az_b^\times z_c\\
&+&\sum_{a<c}(u_{ia}u_{ja}^\times u_{kc}+u_{ic}u_{ja}^\times u_{ka})\otimes z_az_a^\times z_c\\
&+&\sum_{a\neq b}u_{ia}u_{jb}^\times u_{ka}\otimes z_az_b^\times z_a\\
&+&\sum_au_{ia}u_{ja}^\times u_{ka}\otimes z_az_a^\times z_a 
\end{eqnarray*}

By interchanging $i\leftrightarrow k$, we have as well a similar formula for $Z_kZ_j^\times Z_i$.

Now by using the linear independence of the variables on the right, coming from Lemma 4.3 (2) and (3) above, we conclude that the relations $Z_iZ_j^\times Z_k=Z_kZ_j^\times Z_i$ are equivalent to the following system of equations, where $[x,y,z]=xy^\times z-zy^\times x$:

(1) $[u_{ia},u_{jb},u_{kc}]=[u_{ka},u_{jb},u_{ic}]$, for $a,b,c$ distinct.

(2) $[u_{ia},u_{ja},u_{kc}]=[u_{ka},u_{ja},u_{ic}]$.

(3) $[u_{ia},u_{jb},u_{ka}]=0$.

Here we have merged the relation coming by comparing the fourth sums, namely $[u_{ia},u_{ja},u_{ka}]=0$ for any $a$, with the relations coming from the second and third sums, in order to drop the assumptions $a\neq c$, $a\neq b$ appearing there.

Our claim, which will prove the result, is that the above equations (1,2,3) are in fact equivalent to $[u_{ia},u_{jb},u_{kc}]=0$, regardless of the indices $i,j,k$ and $a,b,c$.

Let us first process the relations (1). By applying the antipode and then the involution we obtain $[u_{ck},u_{bj},u_{ai}]=[u_{ci},u_{bj},u_{ak}]$, for any $a,b,c$ distinct, and then by relabelling we obtain $[u_{kc},u_{jb},u_{ia}]=[u_{ka},u_{jb},u_{ic}]$, for any $i,j,k$ distinct. Now by comparing with the original relations (1), we have several cases, and we are led to the following relations:

(1a) $[u_{ia},u_{jb},u_{kc}]=0$, for $a,b,c$ distinct, and $i,j,k$ distinct.

(1b) $[u_{ia},u_{jb},u_{kc}]=[u_{ka},u_{jb},u_{ic}]$, for $a,b,c$ distinct, and $i,j,k$ not distinct.

We further process now the relations (1b). Since the relations at $i=k$ are trivial, and those at $i=j,j=k$ are equivalent, we can assume that we have $i=j$, and we get:

(1b') $[u_{ia},u_{ib},u_{kc}]=[u_{ka},u_{ib},u_{ic}]$, for $a,b,c$ distinct.

Let us process now the above relations (2). By applying the antipode and the involution we obtain $[u_{ck},u_{aj},u_{ai}]=[u_{ci},u_{aj},u_{ak}]$, and by relabelling, we obtain:

(2') $[u_{kc},u_{ib},u_{ia}]=[u_{ka},u_{ib},u_{ic}]$.

The point now is that the relations (1b'), (2') can be merged. Indeed, in view of (2'), the relations (1b') simplify to:

(1b'') $[u_{ia},u_{ib},u_{kc}]=0$, for $a,b,c$ distinct.

Now, with these relations (1b'') in hand, the relations (2') are automatic for $a,b,c$ distinct. Thus, what is left from the relations (2') is:

(2'') $[u_{kc},u_{ib},u_{ia}]=[u_{ka},u_{ib},u_{ic}]$, for $a,b,c$ not distinct.

As a partial conclusion, the relevant relations are (1a), (1b''), (2''), (3). Now let us further process the relations (2"). Since these relations are automatic at $a=c$, and are equivalent at $a=b,b=c$, we can assume $a=b$, and we obtain:

($2^*$) $[u_{ka},u_{ia},u_{ic}]=[u_{kc},u_{ia},u_{ia}]$.

Now by applying the antipode and then the involution we obtain $[u_{ci},u_{ai},u_{ak}]=[u_{ai},u_{ai},u_{ck}]$, and by relabelling we obtain $[u_{ka},u_{ia},u_{ic}]=[u_{ia},u_{ia},u_{kc}]$. By comparing now with the original relations ($2^*$) we are led to the following two relations:

($2^*$a) $[u_{ka},u_{ia},u_{ic}]=0$.

($2^*$b) $[u_{kc},u_{ia},u_{ia}]=0$.

Summarizing, the relevant relations are (1a), (1b''), ($2^*$a), ($2^*$b), (3). Now observe that all these relations are of the form $[u_{ia},u_{jb},u_{kc}]=0$, the precise assumptions being: 

(1a) $i,j,k$ distinct, and $a,b,c$ distinct.

(1b") $i=j$, and $a,b,c$ distinct.

($2^*$a) $i=j$, and $b=c$.

($2^*$b) $i=j$, and $a=b$.

(3) $a=c$.

Our claim is that, from these relations, we can deduce that we have $[u_{ia},u_{jb},u_{kc}]=0$, regardless of the indices. Indeed, let us look first at (1b"), ($2^*$a), ($2^*$b). These relations are of the same nature, involving the assumption $i=j$, and since by (3) the relation $[u_{ia},u_{jb},u_{kc}]=0$ holds as well for $i=j,a=c$, we can merge them. We conclude that the relations $[u_{ia},u_{jb},u_{kc}]=0$ hold, under the following assumptions:

(1a) $i,j,k$ distinct, and $a,b,c$ distinct.

($2^+$)  $i=j$.

(3) $a=c$.

Now by using the antipode, the relations ($2^+$), (3) tell us precisely that we have $[u_{ia},u_{jb},u_{kc}]=0$, whenever $i,j,k$ are not distinct, or when $a,b,c$ are not distinct. But this is exactly the complementary of the case covered by (1a), and we are done.
\end{proof}

Let us discuss now the remaining spheres, $S^{N-1}_{\mathbb C,\circ}\subset S^{N-1}_{\mathbb C,\#}$. We have here:

\begin{proposition}
The affine actions of $U_N^\circ,U_N^\#$ on $S^{N-1}_{\mathbb C,\circ},S^{N-1}_{\mathbb C,\#}$ are universal.
\end{proposition}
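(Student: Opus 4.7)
The plan is to mimic the three-step approach of Proposition 4.5: use linear independence (Lemma 4.3) to extract concrete algebraic relations on the $u_{ij}$ from the coaction formula $z_i \mapsto Z_i = \sum_a u_{ia}\otimes z_a$, and then boost these to the full defining relations of $U_N^\#$ and $U_N^\circ$ by antipode/relabel tricks. Since $S^{N-1}_{\mathbb C,\circ}\subset S^{N-1}_{\mathbb C,\#}$ inherits the $\#$-relations $z_az_b^*=z_bz_a^*$, $z_a^*z_b=z_b^*z_a$ and additionally satisfies the $**$-relations $z_az_bz_c=z_cz_bz_a$, I will treat the two families separately. The $\#$-relations deal with both spheres at once and yield $G\subset U_N^\#$; the $**$-relations handle the extra content of $S^{N-1}_{\mathbb C,\circ}$ and yield $G\subset U_N^{**}$, so that $G\subset U_N^\#\cap U_N^{**}=U_N^\circ$.

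For the $\#$-step I expand $Z_iZ_j^*=Z_jZ_i^*$, use $z_az_b^*=z_bz_a^*$ and invoke Lemma 4.3(1) (the variables $\{z_az_b^*\mid a\leq b\}$ are linearly independent over $S^{N-1}_{\mathbb C,\circ}$, hence over $S^{N-1}_{\mathbb C,\#}$). Collecting the coefficient of $z_az_b^*$ gives
$$u_{ia}u_{jb}^*+u_{ib}u_{ja}^*=u_{ja}u_{ib}^*+u_{jb}u_{ia}^*,$$
which in terms of the antisymmetric quantity $Y(i,a;j,b)=u_{ia}u_{jb}^*-u_{jb}u_{ia}^*$ reads $Y(i,a;j,b)=Y(j,a;i,b)$: symmetry under swapping the row indices $i\leftrightarrow j$.

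The crux — and the step I expect to be the main obstacle — is upgrading this single symmetry to $Y\equiv 0$. I apply the antipode $S$ of $U_N^+$; since $S^2=\mathrm{id}$ one has $S(u_{ij})=u_{ji}^*$ and $S(u_{ij}^*)=u_{ji}$. Applying $S$ antimultiplicatively to the row-symmetry and relabeling produces $Y(i,a;j,b)=Y(i,b;j,a)$, the complementary column symmetry. Combined with the intrinsic antisymmetry $Y(i,a;j,b)=-Y(j,b;i,a)$, the chain $Y(i,a;j,b)=Y(j,a;i,b)=Y(j,b;i,a)=-Y(i,a;j,b)$ forces $Y\equiv 0$, i.e.\ $u_{ia}u_{jb}^*=u_{jb}u_{ia}^*$. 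The identical argument applied to $Z_i^*Z_j=Z_j^*Z_i$ gives $u_{ia}^*u_{jb}=u_{jb}^*u_{ia}$, so $G\subset U_N^\#$. Without the antipode the lone row symmetry is genuinely too weak, so the $S^2=\mathrm{id}$ property of $U_N^+$ is doing real work here.

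For $S^{N-1}_{\mathbb C,\circ}$ it remains to establish $G\subset U_N^{**}$. Since the first step already gave $G\subset U_N^\#\subset U_N^*$, Lemma 4.4 reduces the task to proving $u_{ia}u_{jb}u_{kc}=u_{kc}u_{jb}u_{ia}$ for all indices. This I extract from $Z_iZ_jZ_k=Z_kZ_jZ_i$ by combining $z_az_bz_c=z_cz_bz_a$ with Lemma 4.3(2). At this point the argument of Proposition 4.5, specialized to the exponent $\times=\emptyset$, applies essentially verbatim: linear independence produces the relations labelled (1), (2), (3) there, and the antipode/relabel chain collapses them to the desired full half-commutation. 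Putting everything together, $G\subset U_N^\#\cap U_N^{**}=U_N^\circ$, and the affine actions in Proposition 4.2 are indeed universal.
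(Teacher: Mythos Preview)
Your proof is correct and follows essentially the same route as the paper's: for $S^{N-1}_{\mathbb C,\#}$ you expand $Z_iZ_j^*=Z_jZ_i^*$, use Lemma 4.3(1) to extract the row-symmetry of $Y(i,a;j,b)=u_{ia}u_{jb}^*-u_{jb}u_{ia}^*$, apply the antipode and relabel to get the column-symmetry, and combine these with the built-in antisymmetry to force $Y\equiv 0$ (and similarly for $a^*b=b^*a$); for $S^{N-1}_{\mathbb C,\circ}$ you reuse this together with the $\times=\emptyset$ case of the Proposition~4.5 computation and Lemma~4.4 to conclude $G\subset U_N^\#\cap U_N^{**}=U_N^\circ$. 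The only cosmetic difference is that the paper writes out the relations directly rather than packaging them into the $Y$-tensor, and it invokes Lemma~4.3(1) over $S^{N-1}_{\mathbb C,\#}$ without spelling out the ``hence over $S^{N-1}_{\mathbb C,\#}$'' reduction you make explicit.
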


\begin{proof}
We use the same method as in the proof of Proposition 4.5. We first discuss the case of the sphere $S^{N-1}_{\mathbb C,\#}$. With $Z_i=\sum_uu_{ia}\otimes z_a$, we have:
$$Z_iZ_j^*=\sum_{ab}u_{ia}u_{jb}^*\otimes z_az_b^*$$

By using now the relations $z_az_b^*=z_b^*z_a$, this formula can be written as:
$$Z_iZ_j^*=\sum_{a<b}(u_{ia}u_{jb}^*+u_{ib}u_{ja}^*)\otimes z_az_b^*+\sum_au_{ia}u_{ja}^*\otimes z_az_a^*$$

By interchanging $i\leftrightarrow j$, we have as well the following formula:
$$Z_jZ_i^*=\sum_{a<b}(u_{ja}u_{ib}^*+u_{jb}u_{ia}^*)\otimes z_az_b^*+\sum_au_{ja}u_{ia}^*\otimes z_az_a^*$$

Now since by Lemma 4.3 (1) the variables on the right are independent, we conclude that the relations $Z_iZ_j^*=Z_jZ_i^*$ are equivalent to the following conditions:

(1) $u_{ja}u_{ib}^*-u_{ib}u_{ja}^*=u_{jb}u_{ia}^*-u_{ia}u_{jb}^*$.

(2) $u_{ja}u_{ia}^*=u_{ia}u_{ja}^*$.

Here we have dropped the assumption $a<b$ for the first relations, because by symmetry we have them for $a>b$ too, and these relations are automatic at $a=b$. By applying now the antipode to these relations, and then by relabelling, we succesively obtain:
$$u_{bi}u_{aj}^*-u_{aj}u_{bi}^*=u_{ai}u_{bj}^*-u_{bj}u_{ai}^*$$
$$u_{ja}u_{ib}^*-u_{ib}u_{ja}^*=u_{ia}u_{jb}^*-u_{jb}u_{ia}^*$$

Now by comparing with the original relations (1), we conclude that: 
$$u_{ja}u_{ib}^*-u_{ib}u_{ja}^*=u_{jb}u_{ia}^*-u_{ia}u_{jb}^*=0$$

In other words, the standard coordinates on a quantum group $G\curvearrowright S^{N-1}_{\mathbb C,\#}$ must satisfy $ab^*=ba^*$. Similarly, these coordinates must satisfy as well $a^*b=b^*a$. We conclude that we must have $G\subset U_N^\#$, and we are therefore done with the $S^{N-1}_{\mathbb C,\#}$ problem.

Regarding now the sphere $S^{N-1}_{\mathbb C,\circ}=S^{N-1}_{\mathbb C,\#}\cap S^{N-1}_{\mathbb C,**}$, our claim is that no new computation is needed. Consider indeed a quantum group $G\curvearrowright S^{N-1}_{\mathbb C,\circ}$. Since Lemma 4.3 (1) was valid over $S^{N-1}_{\mathbb C,\circ}$, the above computations apply, and we obtain $G\subset U_N^\#$. 

On the other hand, since Lemma 4.3 (2) was valid as well over $S^{N-1}_{\mathbb C,\circ}$, the computations in the proof of Proposition 4.5 apply as well, with the choice $\times=\emptyset$, and show that the standard coordinates on $G$ must satisfy the relations $abc=cba$.

In order to conclude, we use Lemma 4.4. We already know that the standard coordinates on $G$ satisfy the relations $abc=cba$, and from $G\subset U_N^\#\subset U_N^*$ we obtain that the relations $ab^*c=cb^*a$ are satisfied as well. Thus Lemma 4.4 applies, and gives $G\subset U_N^{**}$. We therefore conclude that we have $G\subset U_N^\#\cap U_N^{**}=U_N^\circ$, and we are done.
\end{proof}

We can now formulate our main result in this section, as follows:

\begin{theorem}
We have the following correspondence
$$\xymatrix@R=14mm@C=11mm{
S^{N-1}_\mathbb C\ar[r]&S^{N-1}_{\mathbb C,**}\ar[r]&S^{N-1}_{\mathbb C,*}\\
\mathbb TS^{N-1}_\mathbb R\ar[r]\ar[u]&S^{N-1}_{\mathbb C,\circ}\ar[u]\ar[r]&S^{N-1}_{\mathbb C,\#}\ar[u]}\quad
\xymatrix@R=10mm@C=10mm{\\ \longrightarrow}
\quad
\xymatrix@R=14.8mm@C=14mm{
U_N\ar[r]&U_N^{**}\ar[r]&U_N^*\\
\mathbb TO_N\ar[r]\ar[u]&U_N^\circ\ar[u]\ar[r]&U_N^\#\ar[u]}$$
between the $6$ spheres, and their affine quantum isometry groups.
\end{theorem}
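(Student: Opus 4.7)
The plan is to assemble the four previously established cases and fill in the two remaining ones. Proposition 4.2 already gives the affine actions $U_N^\times\curvearrowright S^{N-1}_\times$ for all six spheres, so only universality remains to be proved. Propositions 4.5 and 4.6 dispose of four out of the six cases: they identify the affine quantum isometry groups of $S^{N-1}_{\mathbb C,*}, S^{N-1}_{\mathbb C,**}$ with $U_N^*, U_N^{**}$, and those of $S^{N-1}_{\mathbb C,\#}, S^{N-1}_{\mathbb C,\circ}$ with $U_N^\#, U_N^\circ$. So the genuinely outstanding tasks are the top-left corner $S^{N-1}_\mathbb C\to U_N$ and the bottom-left corner $\mathbb TS^{N-1}_\mathbb R\to\mathbb TO_N$.

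For $S^{N-1}_\mathbb C$ the result is not new: it is exactly the classical affine quantum isometry computation from \cite{bhg}, which by the same antipode/relabel technique used in Proposition 4.5 shows that any $G\subset U_N^+$ acting affinely on $S^{N-1}_\mathbb C$ must have its coordinates $u_{ij}$ pairwise commuting with the $u_{kl}^*$, hence $G\subset U_N$. So in our write-up this case can simply be invoked from \cite{bhg}. Alternatively, one may deduce it directly from Proposition 4.5: an action $G\curvearrowright S^{N-1}_\mathbb C$ factors through $G\curvearrowright S^{N-1}_{\mathbb C,**}$, which forces $G\subset U_N^{**}$, and then the extra commutation relations $ab=ba$, $ab^*=b^*a$ on the $z_i$ can be propagated to the $u_{ij}$ using the analogue of Lemma 4.3 for the linearly independent monomials $\{z_az_b\}$ and $\{z_az_b^*\}$ on $S^{N-1}_\mathbb C$.

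The remaining case $\mathbb TS^{N-1}_\mathbb R\to\mathbb TO_N$ is best handled by an intersection argument, mirroring the structure that appears on the sphere side in Proposition 1.3 and on the quantum group side in Proposition 3.2. Given a quantum group $G\subset U_N^+$ acting affinely on $\mathbb TS^{N-1}_\mathbb R=S^{N-1}_\mathbb C\cap S^{N-1}_{\mathbb C,\circ}$, the action restricts to affine actions on both factors by functoriality of the coaction formula $z_i\mapsto\sum_au_{ia}\otimes z_a$ — concretely, the same formula defines coactions on the larger algebras $C(S^{N-1}_\mathbb C)$ and $C(S^{N-1}_{\mathbb C,\circ})$ once we check that the relations defining these spheres, being weaker than those defining $\mathbb TS^{N-1}_\mathbb R$, are automatically respected by the $Z_i=\sum_au_{ia}\otimes z_a$. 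Applying the previous two cases then gives $G\subset U_N\cap U_N^\circ=\mathbb TO_N$, using the intersection computation from Proposition 3.2.

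The main conceptual obstacle is the restriction step for $\mathbb TS^{N-1}_\mathbb R$: it is not automatic that an action on the smaller sphere extends to an action on the enveloping ones, since the defining relations become weaker, not stronger, when we pass to $S^{N-1}_\mathbb C$ or $S^{N-1}_{\mathbb C,\circ}$. The correct way to phrase this is to observe that affine universal quantum isometry groups are themselves monotone under inclusion of the sphere — an action on a subsphere does not a priori act on the ambient one — so the intersection strategy must instead be implemented at the level of the \emph{defining relations} satisfied by the new coordinates $Z_i$. Concretely, one verifies that for $z\in\mathbb TS^{N-1}_\mathbb R$ the elements $z_i$ satisfy both the commutation and half-commutation relations, hence the $Z_i$ do as well (by the computations in Proposition 4.2 and the corresponding ones for $S^{N-1}_\mathbb C$), and then applies Propositions 4.5, 4.6 together with \cite{bhg} to conclude $G\subset U_N\cap U_N^\circ=\mathbb TO_N$. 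With this, all six cases are covered and the diagrammatic correspondence in the statement follows.
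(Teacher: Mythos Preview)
Your overall structure is fine: the four cases $S^{N-1}_{\mathbb C,*},S^{N-1}_{\mathbb C,**},S^{N-1}_{\mathbb C,\#},S^{N-1}_{\mathbb C,\circ}$ are indeed handled by Propositions 4.5 and 4.6, and the case $S^{N-1}_\mathbb C\to U_N$ is correctly cited from \cite{bhg}. The problem is your treatment of $\mathbb TS^{N-1}_\mathbb R$.

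You correctly identify the obstacle---an action on the intersection does not extend to actions on the ambient spheres---but your proposed fix does not work. Propositions 4.5, 4.6 and the \cite{bhg} argument are not black boxes that take as input ``the $Z_i$ satisfy such-and-such relations''; their proofs hinge on the linear independence results of Lemma 4.3, which are stated and proved over the specific spheres $S^{N-1}_{\mathbb C,\circ}$ and $S^{N-1}_{\mathbb C,**}$. Over $\mathbb TS^{N-1}_\mathbb R$ those independence results largely fail: writing $z_i=ux_i$ with $u\in\mathbb T$ and $x\in S^{N-1}_\mathbb R$, one has $z_az_bz_c=u^3x_ax_bx_c$ and $z_az_b^*z_c=ux_ax_bx_c$, which are fully symmetric in $a,b,c$, so the families $\{z_az_bz_c\mid a\leq c\}$ and $\{z_az_b^*z_c\mid a\leq c\}$ of Lemma 4.3(2),(3) are \emph{not} linearly independent. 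Likewise $z_az_b^*=x_ax_b=z_bz_a^*$, so the full family $\{z_az_b^*\}_{a,b}$ used in the \cite{bhg} argument for $S^{N-1}_\mathbb C$ collapses. The only piece that survives is Lemma 4.3(1), which yields $G\subset U_N^\#$; this alone does not give $G\subset U_N$ or $G\subset U_N^{**}$, and hence not $G\subset\mathbb TO_N$.

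The paper proceeds differently for this case: it runs the \cite{bhg} method directly on $\mathbb TS^{N-1}_\mathbb R$ to obtain the rigidity $G^+(\mathbb TS^{N-1}_\mathbb R)=G(\mathbb TS^{N-1}_\mathbb R)$, and then identifies the classical affine isometry group as $\mathbb TO_N$. Concretely, this means exploiting the degree-two relations specific to $\mathbb TS^{N-1}_\mathbb R$ (such as $z_az_b^*=z_b^*z_a$ and $z_az_b^*=z_a^*z_b$, all equal to $x_ax_b$) together with the surviving independence of $\{x_ax_b\mid a\leq b\}$, and then running the antipode/relabel trick on those relations rather than on the cubic ones. Your intersection idea cannot replace this step.
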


\begin{proof}
The result for $S^{N-1}_\mathbb C$ is known since \cite{bhg}, the result for $\mathbb TS^{N-1}_\mathbb R$ is similar, with the argument in \cite{bhg} showing that we have indeed $G^+(\mathbb TS^{N-1}_\mathbb R)=G(\mathbb TS^{N-1}_\mathbb R)=\mathbb TO_N$, and the remaining results follow from Proposition 4.5 and Proposition 4.6 above.
\end{proof}

Summarizing, we have now some basic understanding of the 6 half-liberated spheres. There are, however, many questions left. A first series of questions concerns the ergodicity, uniqueness, and possible faithfulness of the $U_N^\times$-invariant integration on $S^{N-1}_\times$. A second series of questions concerns the construction of the Laplacian, and notably of its eigenvalues, and the possible Riemannian structure of $S^{N-1}_\times$. Finally, a third series of questions concerns the possible twisting of the above results. See \cite{ba2}, \cite{bgo}.

\section{Half-liberated manifolds}

We discuss now the extension of some of the results in sections 1-4, with the complex sphere $S^{N-1}_\mathbb C$ replaced by more general algebraic manifolds $X\subset S^{N-1}_\mathbb C$. There is in fact a lot of work to be done here, and we have so far only very partial results.

Generally speaking, the problem is that of constructing, under suitable assumptions on $X\subset S^{N-1}_\mathbb C$, a half-liberation diagram for it, as follows:
$$\xymatrix@R=15mm@C=15mm{
X\ar[r]&X^{**}\ar[r]&X^*\\
X^-\ar[r]\ar[u]&X^\circ\ar[u]\ar[r]&X^\#\ar[u]}$$

The starting point is Theorem 4.7 above. Forgetting that on the right we have quantum isometry groups, we can see that, besides the sphere $X=S^{N-1}_\mathbb C$ itself, we have as well the rescaled unitary group $X=\frac{1}{\sqrt{N}}\,U_N$ as example. Indeed, we have:

\begin{proposition}
We have embeddings as follows, given by $z_{ij}=\frac{1}{\sqrt{N}}\,u_{ij}$,
$$\xymatrix@R=16.2mm@C=14mm{
U_N\ar[r]&U_N^{**}\ar[r]&U_N^*\\
\mathbb TO_N\ar[r]\ar[u]&U_N^\circ\ar[u]\ar[r]&U_N^\#\ar[u]}\quad
\xymatrix@R=10mm@C=10mm{\\ \longrightarrow}
\quad
\xymatrix@R=14.6mm@C=10mm{
S^{N^2-1}_\mathbb C\ar[r]&S^{N^2-1}_{\mathbb C,**}\ar[r]&S^{N^2-1}_{\mathbb C,*}\\
\mathbb TS^{N^2-1}_\mathbb R\ar[r]\ar[u]&S^{N^2-1}_{\mathbb C,\circ}\ar[u]\ar[r]&S^{N^2-1}_{\mathbb C,\#}\ar[u]}$$
whose images are given by $\frac{1}{\sqrt{N}}\,U_N^\times=\frac{1}{\sqrt{N}}\,U_N^*\cap S^{N^2-1}_\times$.
\end{proposition}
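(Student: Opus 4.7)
The plan is to construct, for each of the six labels $\times$ indexing a node of the diagram, a surjective $*$-algebra morphism $\pi_\times:C(S^{N^2-1}_{\mathbb C,\times})\to C(U_N^\times)$ sending $z_{ij}\to\frac{1}{\sqrt N}\,u_{ij}$, and then to identify the image of the resulting embedding as the intersection $\frac{1}{\sqrt N}\,U_N^*\cap S^{N^2-1}_\times$ inside $S^{N^2-1}_{\mathbb C,*}$.

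First I would work inside $C(U_N^+)$ and set $w_{ij}=\frac{1}{\sqrt N}\,u_{ij}$. The biunitarity of $u$ gives $\sum_ju_{ij}u_{ij}^*=\sum_ju_{ij}^*u_{ij}=1$ for each fixed $i$, and summing over $i$ yields $\sum_{ij}w_{ij}w_{ij}^*=\sum_{ij}w_{ij}^*w_{ij}=1$; this already produces a morphism $C(S^{N^2-1}_{\mathbb C,+})\to C(U_N^+)$. Next, the exponent-permutation relations of Proposition 1.5 used in Definition 1.2 to cut $S^{N-1}_{\mathbb C,\times}$ out of $S^{N-1}_{\mathbb C,+}$ are exactly the same relations used in Proposition 3.2 to cut $U_N^\times$ out of $U_N^+$ on the generators $u_{ij}$; since these relations are monomial and homogeneous of equal degree on the two sides, rescaling the generators by $\frac{1}{\sqrt N}$ preserves them, so the morphism descends to $\pi_\times$ in each of the five ``polynomial'' cases, and is surjective since the $u_{ij}$ generate $C(U_N^\times)$. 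For the remaining pair $\mathbb TO_N\to\mathbb TS^{N^2-1}_\mathbb R$, one writes a point of $\mathbb TO_N$ as $zV$ with $z\in\mathbb T$ and $V\in O_N$, and uses $\sum_{ij}V_{ij}^2=\mathrm{tr}(VV^t)=N$ to check that $\frac{1}{\sqrt N}(zV_{ij})_{ij}=z\cdot\bigl(\frac{V_{ij}}{\sqrt N}\bigr)_{ij}$ has the Cartesian form required by the definition of $\mathbb TS^{N^2-1}_\mathbb R$.

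For the intersection formula, $U_N^\times\subseteq U_N^*$ for each of the six labels, as is visible in Proposition 3.2, which gives the inclusion $\frac{1}{\sqrt N}\,U_N^\times\subseteq\frac{1}{\sqrt N}\,U_N^*\cap S^{N^2-1}_\times$. For the converse, the intersection on the right corresponds on the algebra side to the quotient of $C(U_N^*)$ obtained by imposing on the $u_{ij}$ the extra relations that define $S^{N^2-1}_\times$ inside $S^{N^2-1}_{\mathbb C,*}$; by the dictionary just established, these are exactly the relations which produce $C(U_N^\times)$ from $C(U_N^*)$, so the intersection collapses to $\frac{1}{\sqrt N}\,U_N^\times$ as required. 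For $\mathbb TO_N$, since $\mathbb TS^{N^2-1}_\mathbb R$ is classical, $\frac{1}{\sqrt N}\,U_N^*\cap\mathbb TS^{N^2-1}_\mathbb R$ lies inside the classical part $\frac{1}{\sqrt N}\,U_N$, and the identification $\frac{1}{\sqrt N}\,U_N\cap\mathbb TS^{N^2-1}_\mathbb R=\frac{1}{\sqrt N}\,\mathbb TO_N$ is then the classical computation appearing at the end of the proof of Proposition 1.3.

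The main obstacle is really systematic case-by-case bookkeeping rather than any conceptual difficulty, together with the mildly exceptional treatment of the $\mathbb TO_N\to\mathbb TS^{N^2-1}_\mathbb R$ pair, which is of Cartesian-factorization type rather than polynomial-relation type; the rest of the argument is a tautological translation between the sphere-type relations on the single-index generators $z_a$ and the same relations on the double-index generators $u_{ij}$.
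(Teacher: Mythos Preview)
Your proposal is correct and follows essentially the same route as the paper: biunitarity of $u$ gives the embedding $U_N^+\subset S^{N^2-1}_{\mathbb C,+}$ via $z_{ij}=\frac{1}{\sqrt N}\,u_{ij}$, and then the observation that the defining relations for $U_N^\times$ inside $U_N^+$ are literally the same monomial relations as those for $S^{N^2-1}_\times$ inside $S^{N^2-1}_{\mathbb C,+}$ yields both the factorized embeddings and the intersection formula. The only cosmetic difference is that you single out the pair $\mathbb TO_N\to\mathbb TS^{N^2-1}_\mathbb R$ for a direct classical check, whereas the paper treats all six nodes uniformly; this is harmless, since by Propositions~1.3 and~3.2 both $\mathbb TS^{N^2-1}_\mathbb R$ and $\mathbb TO_N$ are realized as the intersections $S^{N^2-1}_\mathbb C\cap S^{N^2-1}_{\mathbb C,\circ}$ and $U_N\cap U_N^\circ$, so they too fall under the ``same relations'' dictionary without special treatment.
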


\begin{proof}
Since the fundamental corepresentation $u=(u_{ij})$ of the quantum group $U_N^+$ is biunitary, we have $\sum_{ij}u_{ij}u_{ij}^*=\sum_{ij}u_{ij}^*u_{ij}=N$. Thus we have an embedding $U_N^+\subset S^{N^2-1}_{\mathbb C,+}$ given by $z_{ij}=\frac{1}{\sqrt{N}}\,u_{ij}$. Now since the quantum groups $U_N^\times$ in the statement appear by imposing to the standard coordinates $u_{ij}$ the same relations as those for the coordinates $z_{ij}$ on the corresponding spheres $S^{N^2-1}_\times$, we obtain $\frac{1}{\sqrt{N}}\,U_N^\times=\frac{1}{\sqrt{N}}\,U_N^*\cap S^{N^2-1}_\times$.
\end{proof}

The examples that we have so far, $X=S^{N-1}_\mathbb C$ and $X=\frac{1}{\sqrt{N}}\,U_N$, suggest an approach via ``lifting projective versions''. More precisely, given $X\subset S^{N-1}_\mathbb C$, consider its projective version $PX\subset P^N_\mathbb C$. Also, let $X^-=X\cap\mathbb TS^{N-1}_\mathbb R$, so that $PX^-=PX\cap P^N_\mathbb R$. The general idea is then to define $X^{**},X^*/X^\circ,X^\#$ as being the ``biggest'' submanifolds of the corresponding spheres, having $PX/PX^-$ as projective versions. 

In order for this idea to work, $X,X^-$ themselves must be the lifts to $S^{N-1}_\mathbb C,\mathbb TS^{N-1}_\mathbb R$ of their projective versions $PX,PX^-$. So, let us first recall that we have:

\begin{proposition}
For a subspace $X\subset S^{N-1}_\mathbb C$, the following are equivalent:
\begin{enumerate}
\item $X$ is the lift to $S^{N-1}_\mathbb C$ of its projective version $PX\subset P^N_\mathbb C$.

\item $X$ is invariant under the action of $\mathbb T$, given by $u\cdot z=(uz_i)_i$.
\end{enumerate}
In addition, in this case, $X^-=X\cap\mathbb TS^{N-1}_\mathbb R$ is the lift to $\mathbb TS^{N-1}_\mathbb R$ of $PX^-=PX\cap P^N_\mathbb R$.
\end{proposition}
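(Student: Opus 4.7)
My plan is to treat this as a classical fact about the Hopf-like fibration $\pi: S^{N-1}_\mathbb{C} \to P^N_\mathbb{C}$ sending $z$ to the matrix $(z_i\bar{z}_j)_{ij}$, and show that the fibers of this map are exactly the $\mathbb{T}$-orbits of the action in (2). The rest of the proof is then formal.

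For the identification of the fibers, I would first observe that the coordinates $p_{ij} = z_i\bar{z}_j$ are clearly $\mathbb{T}$-invariant, since $(uz_i)\overline{(uz_j)} = |u|^2 z_i\bar z_j = z_i\bar{z}_j$. Hence each $\mathbb{T}$-orbit is contained in a single fiber. For the converse, suppose $(z_i\bar{z}_j) = (w_i\bar{w}_j)$. Pick an index $i_0$ with $z_{i_0} \neq 0$; the equality $|z_{i_0}|^2 = |w_{i_0}|^2$ forces $w_{i_0} \neq 0$, so we may set $u = z_{i_0}/w_{i_0}$, which satisfies $|u| = 1$. The relation $z_{i_0}\bar{z}_j = w_{i_0}\bar{w}_j$ then yields $\bar z_j = \bar u \bar w_j$, i.e.\ $z_j = u w_j$ for all $j$. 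Thus $\pi^{-1}(\pi(z)) = \mathbb{T}\cdot z$, so the fibers of $\pi$ are exactly the $\mathbb{T}$-orbits.

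With this in hand, the equivalence (1) $\Leftrightarrow$ (2) is immediate: the lift of $PX$ to $S^{N-1}_\mathbb{C}$ is by definition $\pi^{-1}(PX)$, which is a union of fibers of $\pi$, hence $\mathbb{T}$-stable, giving (1) $\Rightarrow$ (2). Conversely, if $X$ is $\mathbb{T}$-invariant, then $X$ is a union of $\mathbb{T}$-orbits, i.e.\ a union of fibers of $\pi$, so $X = \pi^{-1}(\pi(X)) = \pi^{-1}(PX)$, giving (2) $\Rightarrow$ (1).

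For the final assertion, I would use that $\mathbb{T}S^{N-1}_\mathbb{R}$ is itself $\mathbb{T}$-invariant (since $v\cdot(ux) = (vu)x$) and that $\pi$ restricts to the projection $\mathbb{T}S^{N-1}_\mathbb{R} \to P^N_\mathbb{R}$, with fibers again the $\mathbb{T}$-orbits. A point $z \in \mathbb{T}S^{N-1}_\mathbb{R}$ lies in $X^- = X \cap \mathbb{T}S^{N-1}_\mathbb{R}$ iff $\pi(z) \in PX$, i.e.\ iff $\pi(z) \in PX \cap P^N_\mathbb{R} = PX^-$; hence $X^-$ is the preimage in $\mathbb{T}S^{N-1}_\mathbb{R}$ of $PX^-$, which is precisely its lift. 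The only mild subtlety — which I would expect to be the main obstacle — is checking that $PX^- = PX \cap P^N_\mathbb{R}$ really agrees with the projective version of $X^-$, but this follows by writing points of $X^-$ as $ux$ with $u \in \mathbb{T}$, $x \in S^{N-1}_\mathbb{R}$, and noting that $z_i\bar{z}_j = x_i x_j$ is automatically real, so the image of $X^-$ in $P^N_\mathbb{C}$ coincides with the image of $X \cap S^{N-1}_\mathbb{R}$ (via its lift through $\mathbb{T}$) inside $P^N_\mathbb{R}$.
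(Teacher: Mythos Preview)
Your argument is correct and follows the same route as the paper's proof: both identify the fibers of $\pi:S^{N-1}_\mathbb C\to P^N_\mathbb C$ with the $\mathbb T$-orbits, deduce the equivalence (1)$\Leftrightarrow$(2) formally, and then repeat the reasoning for the restricted map $\mathbb TS^{N-1}_\mathbb R\to P^N_\mathbb R$. The paper simply states the fiber identification $\pi(z)=\pi(z')\iff z'\in\mathbb Tz$ without justification, whereas you spell it out; your discussion of the equality $PX^-=PX\cap P^N_\mathbb R$ is also a detail the paper leaves implicit.
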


\begin{proof}
Since the quotient map $\pi:S^{N-1}_\mathbb C\to P^N_\mathbb C$ satisfies $\pi(z)=\pi(z')\iff z'\in\mathbb Tz$, the lifting condition $X=\{x\in S^{N-1}_\mathbb C|\pi(x)\in PX\}$ is equivalent to the $\mathbb T$-invariance of $X$. 

Also, the quotient map $\sigma:\mathbb TS^{N-1}_\mathbb R\to P^N_\mathbb R$ satisfies as well $\sigma(z)=\sigma(z')\iff z'\in\mathbb Tz$, so the lifting condition $X^-=\{x\in\mathbb TS^{N-1}_\mathbb R|\sigma(x)\in PX^-\}$ is equivalent to the $\mathbb T$-invariance of $X^-$. But if $X$ is $\mathbb T$-invariant, then so is $X^-$, and this gives the last assertion. 
\end{proof}

The other problem is that the general noncommutative manifolds $Z\subset S^{N-1}_{\mathbb C,+}$ have in fact two projective versions, one given by $p_{ij}=z_iz_j^*$, and the other one given by $q_{ij}=z_j^*z_i$. In order to deal with this issue, best is to assume that all our manifolds $Z$ are ``conjugation-stable'', in the sense that $C(Z)$ has an anti-automorphism given by $z_i\to z_i^*$. 

Observe that for the manifold $X\subset S^{N-1}_\mathbb C$ itself, the stability under conjugation, which comes from an action of $\mathbb Z_2$, can be combined with the stability under the action of $\mathbb T$, coming from Proposition 5.2 above. In this case, we say that $X$ is $O_2$-invariant.

We can now formulate our half-liberation construction, as follows:

\begin{definition}
If $X\subset S^{N-1}_\mathbb C$ is $O_2$-invariant, we set $X^-=X\cap\mathbb TS^{N-1}_\mathbb R$, and we define
$$\xymatrix@R=16mm@C=15mm{
X\ar[r]&X^{**}\ar[r]&X^*\\
X^-\ar[r]\ar[u]&X^\circ\ar[u]\ar[r]&X^\#\ar[u]}$$
by the fact that $X^{**},X^*/X^\circ,X^\#$ are the conjugation-stable lifts of $PX/PX^-$.
\end{definition}

As a basic example, for the sphere $X=S^{N-1}_\mathbb C$ we have $PX=P^N_\mathbb C$, $PX^-=P^N_\mathbb R$, the lifting problem is trivial, and we obtain the $6$ half-liberated spheres themselves. 

Observe also that, due to our $\mathbb T$-invariance assumption on $X$, all the 6 spaces appearing in the above diagram are the lifts of their projective versions $PX,PX^-$. 

In general, the fact that the above lifts exist indeed follows by dividing the corresponding algebras by suitable ideals. Let us record a more precise result here:

\begin{proposition}
The spaces $X^\times$ appear via  $C(X^\times)=C(S^{N-1}_\times)/<I,J>$, where
$$I/J=
\begin{cases}
\ker[C(P^N_\mathbb C)\to C(PX)]&{\rm at}\ \times=**,*\\
\ker[C(P^N_\mathbb R)\to C(PX^-)]&{\rm at}\ \times=\circ,\#
\end{cases}$$
regarded as linear subspaces of $C(S^{N-1}_\times)$, via the embeddings $p_{ij}= z_iz_j^*/q_{ij}=z_j^*z_i$.
\end{proposition}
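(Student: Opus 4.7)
The plan is to unpack Definition 5.3 and identify the precise two-sided ideal in $C(S^{N-1}_\times)$ corresponding to $X^\times$.

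By Theorem 1.6, inside $C(S^{N-1}_\times)$ the variables $p_{ij}=z_iz_j^*$ generate a copy of $C(P^N_\mathbb C)$ (for $\times\in\{**,*\}$) or of $C(P^N_\mathbb R)$ (for $\times\in\{\circ,\#\}$). A parallel argument produces a second, generally distinct, copy generated by $q_{ij}=z_j^*z_i$. These two copies correspond to the two natural realizations of the projective algebra inside the sphere, and both must be controlled in order to characterize a conjugation-stable lift.

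I would next translate the definition of conjugation-stable lift into conditions on the defining ideal $K$ of a candidate $Y\subset S^{N-1}_\times$. Requiring the $p$-realization of the projective version to give $PX$ (resp.\ $PX^-$) forces $I\subseteq K$, where $I$ is $\ker[C(P^N_\times)\to C(PX)]$ (resp.\ the real analogue) embedded via $p_{ij}=z_iz_j^*$. Requiring the $q$-realization to give the same projective space, which is needed for the construction to be invariant under the anti-automorphism $z_i\to z_i^*$ since this anti-automorphism intertwines the two realizations, forces $J\subseteq K$ as well. Consequently $K\supseteq\langle I,J\rangle$, so $A:=C(S^{N-1}_\times)/\langle I,J\rangle$ is the largest candidate for $C(X^\times)$.

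Finally, I would check that $A$ is itself a conjugation-stable lift, completing the identification. The ideal $\langle I,J\rangle$ is stable under $z_i\to z_i^*$ by construction (the anti-automorphism permutes the generators, using that $X$ is $O_2$-invariant so $PX$ is transposition-invariant), so the conjugation descends to $A$. The image of $C(P^N_\times)$ in $A$ via the $p$-embedding equals $C(P^N_\times)/I=C(PX)$ provided $\langle I,J\rangle\cap C(P^N_\times)=I$. This last equality is the main obstacle; I expect to settle it via the $\mathbb T$-grading of $C(S^{N-1}_\times)$ over $C(P^N_\times)$ guaranteed by Proposition 5.2: the generators of $I$ and $J$ sit in weight zero, so a homogeneity argument controls the intersection of the generated ideal with the weight-zero component and rules out spurious relations among the $p$-variables.
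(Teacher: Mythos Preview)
Your argument is more elaborate than the paper's, because you are reading more into Definition~5.3 than the paper itself does. In the paper, ``conjugation-stable lift of $PX$'' is \emph{defined} to mean the universal quotient $C(X^\times)$ of $C(S^{N-1}_\times)$ for which the two composites $C(P^N_\times)\to C(S^{N-1}_\times)\to C(X^\times)$ (via $p_{ij}=z_iz_j^*$ and via $q_{ij}=z_j^*z_i$) factor through $C(PX)$, respectively $C(PX^-)$. With that reading, Proposition~5.4 is essentially a tautology: the universal such quotient is precisely $C(S^{N-1}_\times)/\langle I,J\rangle$, by the usual universal property of quotients. Your steps (1)--(2) amount exactly to this, and that is where the paper's proof stops.

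Your step (3), and in particular the ``main obstacle'' $\langle I,J\rangle\cap C(P^N_\times)=I$, is an attempt to prove the \emph{stronger} assertion that the $p$-projective version of $X^\times$ equals $PX$ on the nose. The paper does make this claim informally, in the remark following Definition~5.3, but it is not part of the statement of Proposition~5.4 and is not argued there. So you are attempting more than required. Moreover, your proposed homogeneity argument would not settle the obstacle even if it were needed: the $\mathbb T$-grading only tells you that $\langle I,J\rangle$ is homogeneous, hence its degree-zero part is the two-sided ideal of the degree-zero subalgebra generated by $I\cup J$. But in the noncommutative spheres the degree-zero subalgebra is strictly larger than the $p$-copy of $C(P^N_\times)$ --- it also contains the $q$-copy and mixed words such as $z_iz_jz_k^*z_l^*$ --- so you have not reduced the problem of intersecting with the $p$-copy alone. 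In short: drop step (3); the proposition follows directly once you unpack the paper's operational meaning of ``lift''.
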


\begin{proof}
At the algebra level, the lifts at $\times=**,*$ and at $\times=\circ,\#$ in Definition 5.3 above are by definition the universal solutions to the following problems:
$$\xymatrix@R=15mm@C=15mm{
C(P^N_\mathbb C)\ar[r]\ar[d]&C(S^{N-1}_\times)\ar@.[d]\\
C(PX)\ar@.[r]&C(X^\times)
}\qquad\qquad
\xymatrix@R=15mm@C=15mm{
C(P^N_\mathbb R)\ar[r]\ar[d]&C(S^{N-1}_\times)\ar@.[d]\\
C(PX^-)\ar@.[r]&C(X^\times)
}$$

But the solutions to these problems are given by formula in the statement.
\end{proof}

As an illustration, consider the space $X=\frac{1}{\sqrt{N}}\,\mathbb T^N$ formed by the points  $z\in S^{N-1}_\mathbb C$ satisfying $|z_i|=\frac{1}{\sqrt{N}}$ for any $i$. Here we have $X^-=\frac{1}{\sqrt{N}}\,\mathbb T\mathbb Z_2^N$, and the result is:

\begin{proposition}
We have the following (rescaled) half-liberation diagram,
$$\xymatrix@R=16mm@C=15mm{
\mathbb T^N\ar[r]&\widehat{\mathbb Z^{\diamond\diamond N}}\ar[r]&\widehat{\mathbb Z^{\diamond N}}\\
\mathbb T\mathbb Z_2^N\ar[r]\ar[u]&\widehat{\mathbb Z^{\circ N}}\ar[u]\ar[r]&\widehat{\mathbb Z^{\# N}}\ar[u]}$$
where $\diamond,\diamond\diamond,\#,\circ$ are the group-theoretic analogues of the operations $*,**,\#,\circ$.
\end{proposition}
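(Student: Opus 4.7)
The plan is to apply Proposition 5.4 to the four half-liberated entries of the diagram, reducing everything to an explicit computation of the projective versions $PX,PX^-$, and then to recognize the resulting universal $C^*$-algebras as group $C^*$-algebras of suitable half-liberated analogues of $\mathbb Z^N$. The two remaining entries $X = \frac{1}{\sqrt N}\,\mathbb T^N$ and $X^-$ are handled directly from their definitions.

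First I would verify that $X$ is $O_2$-invariant (both the scalar rotation by $u\in\mathbb T$ and the coordinate-wise conjugation obviously preserve $X$) and compute $X^- = X\cap\mathbb TS^{N-1}_\mathbb R$. Writing $z_i = \frac{1}{\sqrt N}\,w_i$ with $w_i\in\mathbb T$, the condition $z\in\mathbb TS^{N-1}_\mathbb R$ forces all the $w_i$ to share a common argument modulo $\pi$, so $w_i\in\{\pm u\}$ for a single $u\in\mathbb T$, yielding $X^- = \frac{1}{\sqrt N}\,\mathbb T\mathbb Z_2^N$. Next I would identify $PX$ and $PX^-$: the computation $p_{ij} = z_iz_j^* = \frac{1}{N}\,w_i\bar w_j$ forces $p_{ii} = 1/N$, and conversely any rank-$1$ trace-$1$ projection $p\in P^N_\mathbb C$ with constant diagonal $1/N$ is automatically of this form, because its Cholesky vector must have all coordinates of modulus $1/\sqrt N$. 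Hence $PX\subset P^N_\mathbb C$ is cut out by the ideal $\langle p_{ii} - 1/N\rangle_i$, and the identical argument in $P^N_\mathbb R$ gives the same ideal cutting out $PX^-$.

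With these descriptions, Proposition 5.4 presents $C(X^\times)$ for $\times\in\{*,**,\circ,\#\}$ as $C(S^{N-1}_\times)$ modulo the two-sided ideal generated by $\{z_iz_i^* - 1/N\}_i$ (the lift of the diagonal relations through $p_{ij} = z_iz_j^*$) together with $\{z_i^*z_i - 1/N\}_i$ (its lift through $q_{ij} = z_j^*z_i$). The rescaling $g_i = \sqrt N\,z_i$ converts these into the unitarity conditions $g_ig_i^* = g_i^*g_i = 1$, while the remaining sphere relations of Definition 1.2 persist verbatim for the $g_i$. Defining the groups $\mathbb Z^{\diamond N},\mathbb Z^{\diamond\diamond N},\mathbb Z^{\# N},\mathbb Z^{\circ N}$ to be presented by $g_1,\ldots,g_N$ subject to exactly these relations, read group-theoretically with $g_i^* = g_i^{-1}$ (so e.g.\ $\mathbb Z^{\diamond N} = \langle g_i\mid g_ag_b^{-1}g_c = g_cg_b^{-1}g_a\rangle$) and $\mathbb Z^{\circ N} = \mathbb Z^{\diamond\diamond N}\cap\mathbb Z^{\# N}$, the universal property then identifies $C(X^\times)$ with $C^*(\mathbb Z^{\times N})$. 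For the top-left classical $X$ one recovers $\mathbb T^N = \widehat{\mathbb Z^N}$, and for $X^-$ the commutative relations combined with the $\#$-type constraints yield $\mathbb T\mathbb Z_2^N$, completing the diagram.

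The main obstacle is the rank-$1$ projection step pinning down $PX$ and $PX^-$ from the diagonal relations alone, since any stray off-diagonal constraint missed here would survive to the lift and potentially distort the group presentations. Once this reduction is clean, the rest of the argument is a mechanical dictionary between the sphere relations on $z_i$ and the group presentations on $g_i = \sqrt N\,z_i$.
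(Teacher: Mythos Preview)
Your proposal is correct and follows essentially the same route as the paper: deduce $z_iz_i^*=z_i^*z_i=\tfrac{1}{N}$ on every lift from the diagonal relations $p_{ii}=q_{ii}=\tfrac{1}{N}$ holding over $PX$, then recognize the resulting quotients of $C(S^{N-1}_\times)$ as group $C^*$-algebras with the sphere relations transcribed via $g_i^*=g_i^{-1}$. The paper's version is more compressed---it jumps directly to $\widehat{\mathbb Z^{\times N}}=\widehat{F_N}\cap\sqrt N\,S^{N-1}_\times$ without explicitly verifying that the diagonal relations cut out all of $PX$---whereas your rank-one projection argument makes that step explicit.
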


\begin{proof}
Observe first that given a discrete group $\Gamma=<g_1,\ldots,g_N>$, we have an embedding $\widehat{\Gamma}\subset S^{N-1}_{\mathbb C,+}$, given by $z_i=\frac{1}{\sqrt{N}}\,g_i$, and that over $P\widehat{\Gamma}$ we have $p_{ii}=q_{ii}=\frac{1}{N}$.

In our case, we deduce from $p_{ii}=q_{ii}=\frac{1}{N}$ that we have $z_iz_i^*=z_i^*z_i=\frac{1}{N}$, over the various lifts. Thus the rescaled lifts are group duals, given by $\widehat{\mathbb Z^{\times N}}=\widehat{F_N}\cap\sqrt{N}S^{N^2-1}_\times$. But this gives the result, with the $\diamond,\#$ constructions obtained respectively by imposing the conditions $ab^{-1}c=cb^{-1}a,ab^{-1}=ba^{-1}$ to the standard generators of $F_N$, and with the $\diamond\diamond,\circ$ constructions being obtained by further imposing the relations $abc=cba$.
\end{proof}

Let us check the fact that the example $X=\frac{1}{\sqrt{N}}\,U_N$ is covered as well:

\begin{proposition}
For the rescaled unitary group $X=\frac{1}{\sqrt{N}}\,U_N$, the abstract half-liberation construction produces the (rescaled) $6$ half-liberated quantum groups $U_N^\times$.
\end{proposition}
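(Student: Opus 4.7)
The plan is to verify directly that the abstract construction of Definition 5.3, when applied to $X=\frac{1}{\sqrt{N}}\,U_N$, reproduces the $6$ rescaled quantum groups from Proposition 5.1, by matching universal properties at the projective level.

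First I would check the input hypothesis: $X=\frac{1}{\sqrt{N}}\,U_N$ is $O_2$-invariant. The $\mathbb{T}$-action is closed on $X$ because $uU\in U_N$ for $u\in\mathbb{T}$, $U\in U_N$, and conjugation-stability is the fact that $\bar{U}\in U_N$ iff $U\in U_N$. Next I would identify the pieces entering Definition 5.3. A unitary matrix that is a scalar multiple of a real unit vector is a scalar multiple of an orthogonal matrix, so $X^{-}=X\cap \mathbb{T}S^{N^{2}-1}_{\mathbb{R}}=\frac{1}{\sqrt{N}}\,\mathbb{T}O_N$. Passing to projective versions gives $PX=PU_N$ and $PX^{-}=PO_N$. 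At this point Proposition 3.3 becomes the key comparison: the projective versions of the $U_N^{\times}$ are exactly $PU_N$ (for $\times\in\{**,*\}$) and $PO_N$ (for $\times\in\{\circ,\#\}$), which is precisely the pattern demanded by Definition 5.3.

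Second, combining this with Proposition 5.1, each rescaled $\frac{1}{\sqrt{N}}\,U_N^{\times}$ sits inside $S^{N^{2}-1}_{\times}$ as a conjugation-stable subspace with the correct projective version. Hence it embeds in the universal lift: $\frac{1}{\sqrt{N}}\,U_N^{\times}\subset X^{\times}$. For the reverse inclusion I would invoke Proposition 5.4: $C(X^{\times})=C(S^{N^{2}-1}_{\times})/\langle I,J\rangle$, where $I,J$ are the kernels of the surjections onto $C(PU_N)$ (respectively $C(PO_N)$), pulled back via $p_{(ij),(kl)}=z_{ij}z_{kl}^{*}$ and $q_{(ij),(kl)}=z_{kl}^{*}z_{ij}$. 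The defining ideal of $PU_N$ inside $P^{N^{2}-1}_{\mathbb{C}}$ contains the trace-type relations $\sum_{j}p_{(ij),(kj)}=\tfrac{1}{N}\delta_{ik}$ and $\sum_{i}p_{(ij),(il)}=\tfrac{1}{N}\delta_{jl}$ coming from $UU^{*}=U^{*}U=I$, and after pullback via $p$ and $q$ these become exactly the (rescaled) biunitarity relations $\sum_{k}z_{ik}z_{jk}^{*}=\sum_{k}z_{ki}^{*}z_{kj}=\tfrac{1}{N}\delta_{ij}$, which are the defining relations of $\frac{1}{\sqrt{N}}\,U_N^{\times}\subset S^{N^{2}-1}_{\times}$. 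So $X^{\times}\subset \frac{1}{\sqrt{N}}\,U_N^{\times}$, giving equality. The bottom row follows by the same argument with $PO_N$ in place of $PU_N$.

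The main obstacle is the Gelfand-theoretic step inside the reverse inclusion: one must verify that the biunitarity relations are not just elements of the defining ideal of $PU_N\subset P^{N^{2}-1}_{\mathbb{C}}$, but together with the projective-space relations $p=p^{*}=p^{2}$, $\mathrm{Tr}(p)=1$ they suffice to cut out $PU_N$. Classically this is the statement that a rank-one projection $zz^{*}$ on $\mathbb{C}^{N^{2}}\simeq M_N(\mathbb{C})$ lies in the image of $U_N$ exactly when the matrix $\sqrt{N}\,z$ is unitary, and this is where all the matrix-coordinate bookkeeping happens, with the use of both $p$ and $q$ in Proposition 5.4 being essential to recover both $UU^{*}=I$ and $U^{*}U=I$ at the noncommutative level.
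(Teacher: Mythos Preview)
Your argument is correct and tracks the paper's proof closely. The paper shows, via a displayed chain of implications $U_N\Rightarrow PU_N\Rightarrow U_N^{\times\times}$, that the four biunitarity relations hold on the lift, hence $U_N^{\times\times}\subset U_N^+$, and then concludes using $U_N^\times=U_N^+\cap\sqrt{N}\,S^{N^2-1}_{\mathbb C,\times}$. Your reverse inclusion via Proposition~5.4 is exactly this computation; and you are in fact more explicit than the paper about the forward inclusion $\frac{1}{\sqrt{N}}\,U_N^\times\subset X^\times$, which you obtain cleanly from Proposition~3.3.

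One comment: your final paragraph overstates the difficulty. For the inclusion $X^\times\subset\frac{1}{\sqrt{N}}\,U_N^\times$ you only need the four biunitarity-type relations to lie \emph{in} $I$ and $J$ (immediate, since they vanish on $PU_N$), not that they \emph{generate} those ideals or suffice to cut out $PU_N$. The worry that $\langle I,J\rangle$ might be strictly larger than the biunitarity ideal would only threaten the forward inclusion, and that one you have already secured independently via Proposition~3.3. So the ``Gelfand-theoretic obstacle'' you flag does not actually arise.
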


\begin{proof}
We first discuss the lifting problem for $PU_N\subset P^{N^2}_\mathbb C$. If we denote by $U_N^{\times\times}$ the rescaling of the lift inside $S^{N-1}_{\mathbb C,\times}$, we have the following series of implications:
\begin{eqnarray*}
&&uu^*=u^*u=u^t\bar{u}=\bar{u}u^t=1,\ {\rm over}\ U_N\\
&\implies&\sum_ku_{ik}\bar{u}_{jk}=\sum_k\bar{u}_{ki}u_{kj}=\sum_ku_{ki}\bar{u}_{kj}=\sum_k\bar{u}_{ik}u_{jk}=\delta_{ij},\ {\rm over}\ U_N\\
&\implies&\sum_kp_{ik,jk}=\sum_kq_{ki,kj}=\sum_kp_{ki,kj}=\sum_kq_{ik,jk}=\delta_{ij},\ {\rm over}\ N\cdot PU_N\\
&\implies&\sum_ku_{ik}u_{jk}^*=\sum_ku_{ki}^*u_{kj}=\sum_ku_{ki}u_{kj}^*=\sum_ku_{ik}^*u_{jk}=\delta_{ij},\ {\rm over}\ U_N^{\times\times}\\
&\implies&uu^*=u^*u=u^t\bar{u}=\bar{u}u^t=1,\ {\rm over}\ U_N^{\times\times}
\end{eqnarray*}

Thus we have an inclusion $U_N^{\times\times}\subset U_N^+$. But since $U_N^\times$ is by definition given by $U_N^\times=U_N^+\cap\sqrt{N}S^{N^2-1}_{\mathbb C,\times}$, we conclude that we have $U_N^{\times\times}=U_N^\times$, as desired.

For the lifting problem for $PO_N\subset P^{N^2}_\mathbb R$ we can use the same proof, because the above middle relations, over $N\cdot PU_N$, hold over $N\cdot PO_N$ as well.
\end{proof}

Let us work out as well a ``discrete'' analogue of Proposition 5.6. Consider the group $K_N\subset U_N$ of matrices which are monomial, in the sense that each row and each column has exactly one nonzero entry. Its free version $K_N^+\subset U_N^+$ is then defined via the relations $ab^*=a^*b=0$, for any $a\neq b$ on the same row or column of $u$. See \cite{ba1}. 

With the notations, we have the following result:

\begin{proposition}
With $X=\frac{1}{\sqrt{N}}\,K_N$ we obtain the following diagram,
$$\xymatrix@R=15mm@C=15mm{
K_N\ar[r]&K_N^{**}\ar[r]&K_N^*\\
\mathbb TH_N\ar[r]\ar[u]&K_N^\circ\ar[u]\ar[r]&K_N^\#\ar[u]}$$
rescaled by $\frac{1}{\sqrt{N}}$, where on the right we have the quantum groups $K_N^\#=K_N^+\cap U_N^\#$.
\end{proposition}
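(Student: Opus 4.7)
The plan is to mimic the proof of Proposition 5.6, extending the chain of implications to incorporate the monomial relations that distinguish $K_N^+$ from $U_N^+$.

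First I verify the hypotheses of Definition 5.3. The set $X=\frac{1}{\sqrt{N}}\,K_N$ is stable under the $\mathbb T$-action $M\to\lambda M$ and under conjugation $M\to\bar M$, since multiplying a monomial unitary matrix by a scalar of modulus one or complex-conjugating it keeps it monomial unitary. Hence $X$ is $O_2$-invariant. The real monomial unitary matrices are the signed permutation matrices, so $X^-=X\cap\mathbb TS^{N^2-1}_\mathbb R=\frac{1}{\sqrt{N}}\,\mathbb TH_N$, and therefore $PX=PK_N$ and $PX^-=PH_N$.

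The main step is then to trace the kernel elements through the lift of Proposition 5.4. Over $K_N$ we have the biunitarity relations $uu^*=u^*u=u^t\bar u=\bar uu^t=1$, together with the monomial relations $u_{ij}u_{il}^*=u_{ij}u_{kj}^*=0$ for $j\neq l$, $i\neq k$ and their adjoint versions. At the projective level the biunitarity yields $\sum_k p_{(ik),(jk)}=\delta_{ij}$ and its three variants, while the monomial relations yield $p_{(ij),(il)}=p_{(ij),(kj)}=0$; analogous formulas hold for $q_{(ij),(kl)}=z_{kl}^*z_{ij}$. These elements lie in $\ker[C(P^{N^2}_\mathbb C)\to C(PK_N)]$, and they lie as well in $\ker[C(P^{N^2}_\mathbb R)\to C(PH_N)]$ since $PH_N\subset PK_N$. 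Factoring through as in Proposition 5.4 and setting $u_{ij}=\sqrt{N}\,z_{ij}$, the argument of Proposition 5.6 transported verbatim forces biunitarity, and the extra monomial elements force $u_{ij}u_{il}^*=u_{ij}^*u_{il}=u_{ij}u_{kj}^*=u_{ij}^*u_{kj}=0$ for the appropriate distinct indices, which are exactly the defining relations of $K_N^+$. Combined with the half-commutation relations built into $S^{N^2-1}_\times$, this produces an inclusion $X^\times\subset\frac{1}{\sqrt N}\,(K_N^+\cap U_N^\times)=\frac{1}{\sqrt N}\,K_N^\times$, the last equality being an instance of Proposition 5.1 for the subgroup $K_N^+\subset U_N^+$.

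For the reverse inclusion, $\frac{1}{\sqrt N}\,K_N^\times$ is visibly a conjugation-stable subspace of $S^{N^2-1}_\times$, and its projective version lies inside $PK_N$ for $\times=**,*$ and inside $PH_N$ for $\times=\circ,\#$. The first statement follows from $K_N^\times\subset K_N^+$ together with $PK_N^+=PK_N$, and the second is the analogue for $K_N$ of $PU_N^\#=PO_N$ from Proposition 3.3: the $\#$-type half-liberation forces the projective version to be real, so that $PK_N^\#\subset PK_N\cap PO_N=PH_N$, and similarly for $\circ$. By the universal property of the lift we then obtain $\frac{1}{\sqrt N}\,K_N^\times\subset X^\times$, completing the proof. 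The main obstacle I anticipate is precisely this real/complex dichotomy in the $\circ,\#$ cases: one must check that the extra relations on $PH_N$ beyond those on $PK_N$, when lifted through Proposition 5.4, produce exactly the $\#$-type symmetry $u_{ij}u_{kl}^*=u_{kl}u_{ij}^*$ and not something coarser, so that the ambient sphere's half-liberation meshes with the monomial structure to produce $K_N^\#=K_N^+\cap U_N^\#$ and not merely a proper subobject.
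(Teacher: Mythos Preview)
Your approach mirrors the paper's closely: identify $X^-=\frac{1}{\sqrt N}\,\mathbb TH_N$, then push the biunitarity and monomial relations from $K_N$ through the projective version and back up to the lift, obtaining $K_N^{\times\times}\subset K_N^+\cap U_N^\times=K_N^\times$. The paper stops there with ``and we are done''; you are more careful in attempting the reverse inclusion explicitly, which is commendable.

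However, your justification of the reverse inclusion for $\times\in\{*,**\}$ contains a genuine error. You claim that $PK_N^\times\subset PK_N$ follows from $K_N^\times\subset K_N^+$ together with $PK_N^+=PK_N$. But $PK_N^+=PK_N$ is neither established in the paper nor true: $K_N^+$ is a genuinely free quantum group, with no half-commutation available, so the variables $u_{ij}u_{kl}^*$ do not commute and $PK_N^+$ is not classical. The correct route is the one you already use for the $\#,\circ$ cases: from $K_N^\times\subset U_N^\times$ and Proposition~3.3 you obtain $PK_N^\times\subset PU_N^\times=PU_N$, so $PK_N^\times$ is classical; then the monomial relations $u_{ij}u_{ik}^*=0$, which hold on $K_N^\times$ by definition, cut $PK_N^\times$ down to $PK_N$. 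With this fix your anticipated obstacle at the end also dissolves: once both inclusions are established, any extra relations in $\ker[C(P^{N^2}_\mathbb R)\to C(PH_N)]$ are automatically redundant in $C(K_N^\#)$, so there is no risk of landing in a proper subobject.
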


\begin{proof}
The space $K_N^-=\sqrt{N}X^-$ is given by $K_N^-=K_N\cap\sqrt{N}\mathbb TS^{N^2-1}_\mathbb R$, and we therefore obtain $K_N^-=K_N\cap\mathbb TO_N=\mathbb TH_N$, where $H_N\subset O_N$ is the hyperoctahedral group.

Let us first compute the various lifts of $PK_N$. We already know from Proposition 5.6 that these lifts satisfy $K_N^{\times\times}\subset U_N^\times$. Also, for $j\neq k$ we have:
\begin{eqnarray*}
&&u_{ij}\bar{u}_{ik}=\bar{u}_{ij}u_{ik}=u_{ji}\bar{u}_{ki}=\bar{u}_{ki}u_{ki}=0,\ {\rm over}\ K_N\\
&\implies&p_{ij,ik}=q_{ij,ik}=p_{ji,ki}=q_{ji,ki}=0,\ {\rm over}\ PK_N\\
&\implies&u_{ij}u_{ik}^*=u_{ij}^*u_{ik}=u_{ji}u_{ki}^*=u_{ji}^*u_{ki}=0,\ {\rm over}\ K_N^{\times\times}
\end{eqnarray*}

We conclude that the lifts appear inside $U_N^\times$ via the relations $ab^*=a^*b=0$, for any $a\neq b$ on the same row or column of $u$. Thus we have $K_N^{\times\times}\subset K_N^+$, and we are done. 

The lifting problem for $P\mathbb TH_N=PH_N$ is similar, by using the same computation.
\end{proof}

Summarizing, the half-liberation operation that we constructed leads to quite natural objects, in all the cases investigated so far. In particular, we can now formulate:

\begin{theorem}
For the half-liberations of the sphere $X=S^{N-1}_\mathbb C$ we have 
$$G^+(X^\times)=G(X)^\times$$
with the quantum isometry groups being taken in an affine sense.
\end{theorem}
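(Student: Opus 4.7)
My plan is to observe that this theorem is essentially a repackaging of Theorem 4.7 in the abstract framework of Section 5, and the work reduces to verifying that Definition 5.3 applied to $X=S^{N-1}_{\mathbb C}$ recovers the six concrete half-liberated spheres, and likewise (via Proposition 5.6) that the analogous construction applied to the classical isometry group recovers the six half-liberated quantum groups $U_N^\times$.

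First, I would identify $X^\times$ for $X=S^{N-1}_{\mathbb C}$. Since $X$ is $O_2$-invariant and $X^-=X\cap\mathbb TS^{N-1}_{\mathbb R}=\mathbb TS^{N-1}_{\mathbb R}$, we have $PX=P^N_{\mathbb C}$ and $PX^-=P^N_{\mathbb R}$. By Theorem 1.6, the projective versions of the six half-liberated spheres are exactly $P^N_{\mathbb C}$ (top row) and $P^N_{\mathbb R}$ (bottom row). Therefore the ideals $I/J$ appearing in Proposition 5.4 are trivial, and the lifts coincide with the ambient spheres:
$$X^{**}=S^{N-1}_{\mathbb C,**},\quad X^*=S^{N-1}_{\mathbb C,*},\quad X^\circ=S^{N-1}_{\mathbb C,\circ},\quad X^\#=S^{N-1}_{\mathbb C,\#}.$$
Together with $X=S^{N-1}_{\mathbb C}$ and $X^-=\mathbb TS^{N-1}_{\mathbb R}$, this recovers the full half-liberation diagram of spheres.

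Second, I would identify $G(X)^\times$ for the classical isometry group $G(X)=U_N$ of the complex sphere. For this I invoke Proposition 5.6 (which is already proved above), stating that the abstract half-liberation construction applied to the rescaled unitary group $\frac{1}{\sqrt N}\,U_N\subset S^{N^2-1}_{\mathbb C}$ produces precisely the six rescaled half-liberated quantum groups $\frac{1}{\sqrt N}\,U_N^\times$, together with $\frac{1}{\sqrt N}\,\mathbb TO_N$ in the bottom-left corner. Thus $G(X)^\times=U_N^\times$ in each of the six cases.

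Finally, combining these two identifications with Theorem 4.7, which establishes precisely the correspondence
$$G^+(S^{N-1}_{\mathbb C,\times})=U_N^\times$$
for each of the six half-liberations, yields $G^+(X^\times)=G(X)^\times$ as claimed. No new computation is needed; the only minor obstacle is bookkeeping — matching the notation of Definition 5.3 (abstract lifts of projective versions) with the concrete spheres and quantum groups of Sections 1 and 3, and noting that for $X=S^{N-1}_{\mathbb C}$ the lifting step is trivial because $PX$ already fills all of $P^N_{\mathbb C}$.
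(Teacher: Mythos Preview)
Your proposal is correct and takes essentially the same approach as the paper, which simply states that this is a reformulation of Theorem 4.7 using the abstract half-liberation formalism of Section 5. Your version is more explicit about the bookkeeping --- in particular your invocation of Proposition 5.6 to interpret $G(X)^\times$ via the abstract construction is a legitimate reading, though one could also take $G(X)^\times=U_N^\times$ directly from Proposition 3.2 as a mere notational label, in which case even that step is unnecessary.
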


\begin{proof}
This is just a reformulation of the results that we proved before, in Theorem 4.7, by using the abstract half-liberation formalism developed above.
\end{proof}

Observe that the formula established above could be thought of as being related to the various rigidity results of type $G^+(X)=G(X)$, from \cite{bhg}, \cite{gjo}. 

In general, it is quite unclear what exact assumptions on $X\subset S^{N-1}_\mathbb C$ could lead to such results. This is an interesting question, that we would like to raise here.

\section{Real versions}

In this section we discuss a number of more specialized results, concerning the real versions of our half-liberations, obtained by imposing the conditions $z_i=z_i^*$ to the standard coordinates. First, we have the following elementary result:

\begin{proposition}
The real versions of the half-liberations $X^\times$ are
$$\xymatrix@R=16mm@C=15mm{
X_\mathbb R\ar[r]&X_\mathbb R^*\ar[r]&X_\mathbb R^*\\
X_\mathbb R\ar[r]\ar[u]&X_\mathbb R\ar[u]\ar[r]&X_\mathbb R\ar[u]}$$
where $X_\mathbb R=X\cap S^{N-1}_\mathbb R$, and $X_\mathbb R^*=X^{**}\cap S^{N-1}_{\mathbb R,+}$.
\end{proposition}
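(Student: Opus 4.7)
The plan is to handle the six cases individually, using the fact that the diagram in Definition 5.3 provides inclusions $X^-\subset X,X^\circ\subset X^{**}\subset X^*$ and $X^-\subset X^\circ\subset X^\#$, so that the corresponding real versions inherit these inclusions.

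For the four bottom-row-and-top-left entries, all four real versions should equal $X_\mathbb R$. This is clear for $X$, and for $X^-=X\cap\mathbb TS^{N-1}_\mathbb R$ we have $(X^-)_\mathbb R=X\cap(\mathbb TS^{N-1}_\mathbb R\cap S^{N-1}_\mathbb R)=X\cap S^{N-1}_\mathbb R=X_\mathbb R$. The key case is $X^\#$: imposing $z_i=z_i^*$ turns the defining relations $ab^*=ba^*,a^*b=b^*a$ into $ab=ba$, so $(X^\#)_\mathbb R$ is classical, i.e. a subspace of $S^{N-1}_\mathbb R$. By functoriality its projective version sits inside $PX^\#=PX^-$, which by Proposition~5.2 coincides with $PX_\mathbb R$; since $X_\mathbb R$ is the full classical lift of $PX_\mathbb R$ (this uses the $\mathbb T$-invariance of $X$, which implies $\mathbb Z_2$-invariance of $X_\mathbb R$), we get $(X^\#)_\mathbb R\subset X_\mathbb R$. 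The reverse inclusion follows from $X_\mathbb R\subset X^-\subset X^\#$. The case of $X^\circ$ is then obtained by sandwiching: $X_\mathbb R=(X^-)_\mathbb R\subset(X^\circ)_\mathbb R\subset(X^\#)_\mathbb R=X_\mathbb R$.

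For the top row, the real version of $X^{**}$ is $X^{**}\cap S^{N-1}_{\mathbb R,+}=X_\mathbb R^*$ directly by definition. For $X^*$, the inclusion $X_\mathbb R^*=(X^{**})_\mathbb R\subset(X^*)_\mathbb R$ is immediate from $X^{**}\subset X^*$. The reverse is the key step: under $z_i=z_i^*$, the relations $ab^*c=cb^*a$ of $X^*$ collapse to $abc=cba$, which is nothing but the half-commutation relation for $S^{N-1}_{\mathbb R,*}$. Since $S^{N-1}_{\mathbb R,*}$ visibly embeds into $S^{N-1}_{\mathbb C,**}$, we get $(X^*)_\mathbb R\subset S^{N-1}_{\mathbb C,**}$. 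By functoriality, the projective version of $(X^*)_\mathbb R$ is contained in $PX^*=PX$, and conjugation-stability holds automatically since $z_i=z_i^*$ makes the order-reversal a well-defined anti-automorphism on any half-commutative quotient. By the universal property of Proposition~5.4, these two properties force $(X^*)_\mathbb R$ into the lift $X^{**}$, so that $(X^*)_\mathbb R\subset X^{**}\cap S^{N-1}_{\mathbb R,+}=X_\mathbb R^*$.

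The main obstacle is the last step, where one must carefully invoke the universal characterization of the lift $X^{**}$ inside $S^{N-1}_{\mathbb C,**}$, as described by the ideal presentation of Proposition~5.4, rather than its naive description via relations; and verify the conjugation-stability assumption in that setting. The other nontrivial input, used for $X^\#$, is the identification $PX^-=PX_\mathbb R$, which is a consequence of the $O_2$-invariance hypothesis and was already extracted in the discussion following Proposition~5.2.
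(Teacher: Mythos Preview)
Your proof is correct. The paper dispatches this in a single sentence by invoking the last assertion of Proposition~1.4: since each $X^\times$ sits inside the corresponding sphere $S^{N-1}_{\mathbb C,\times}$, and since by Proposition~1.4 the real versions of these spheres collapse to $S^{N-1}_{\mathbb R,*}$ (top row) or $S^{N-1}_\mathbb R$ (bottom row and $S^{N-1}_\mathbb C$), the real versions of the $X^\times$ collapse accordingly. The paper leaves implicit what you make explicit, namely that once the ambient sphere is fixed, the lift construction of Proposition~5.4 determines $X^\times$ uniquely, so the three lifts over each row agree after intersecting with $S^{N-1}_{\mathbb R,+}$.

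Your route is essentially an unpacking of this. A couple of the intermediate claims you invoke are not stated verbatim in the references you cite: the equality $PX^\#=PX^-$ (and $PX^*=PX$) is not the content of Proposition~5.2 but rather the remark after Definition~5.3, and the identification $PX^-=PX_{\mathbb R}$ follows from $\mathbb T$-invariance via $X^-=\mathbb T\cdot X_{\mathbb R}$ rather than appearing in Proposition~5.2 itself. These are easy to supply, so there is no gap. Your argument for $(X^*)_\mathbb R\subset X^{**}$ via the universal property is a bit more indirect than simply observing that $(X^*)_\mathbb R$ and $(X^{**})_\mathbb R$ are both presented as $C(S^{N-1}_{\mathbb R,*})/\langle \bar I,\bar J\rangle$ (the quotient by the same images of $I,J$), hence coincide; this direct presentation-level argument is closer in spirit to what the paper's one-liner is gesturing at.
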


\begin{proof}
This follows indeed from the last assertion in Proposition 1.4 above, which tells us that taking the real versions amounts in intersecting with $S^{N-1}_{\mathbb R,*}/S^{N-1}_\mathbb R$.
\end{proof}

We can axiomatize the construction $X\to X_\mathbb R^*$, as follows:

\begin{proposition}
Given an $O_2$-invariant closed subset $X\subset S^{N-1}_\mathbb C$, the closed subset $X_\mathbb R^*\subset S^{N-1}_{\mathbb R,*}$ appears by lifting the projective version $PX\subset P^N_\mathbb C$.
\end{proposition}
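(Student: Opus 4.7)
The plan is to derive $X_\mathbb R^*$ directly from the presentation of $X^{**}$ furnished by Proposition 5.4, and then to use the $O_2$-invariance of $X$ to collapse the two ideals that naturally appear. Invoking Proposition 5.4 with $\times = **$ gives
$$C(X^{**}) = C(S^{N-1}_{\mathbb C,**}) \Big/ \big\langle \phi(I),\, \psi(I) \big\rangle,$$
where $I = \ker[C(P^N_\mathbb C) \to C(PX)]$ and $\phi, \psi: C(P^N_\mathbb C) \to C(S^{N-1}_{\mathbb C,**})$ are the two embeddings $\phi(p_{ij}) = z_iz_j^*$ and $\psi(p_{ij}) = z_j^*z_i$. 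Passing to $X_\mathbb R^* = X^{**} \cap S^{N-1}_{\mathbb R,+}$ amounts to imposing $z_i = z_i^*$, and by Proposition 1.4 we have $S^{N-1}_{\mathbb C,**} \cap S^{N-1}_{\mathbb R,+} = S^{N-1}_{\mathbb R,*}$, so that
$$C(X_\mathbb R^*) = C(S^{N-1}_{\mathbb R,*}) \Big/ \big\langle \phi_0(I),\, \psi_0(I) \big\rangle,$$
where $\phi_0(p_{ij}) = z_iz_j$ and $\psi_0(p_{ij}) = z_jz_i$ are the composites of $\phi,\psi$ with the quotient onto the real $*$-sphere.

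The main step is to show that the two ideals $\phi_0(I)$ and $\psi_0(I)$ coincide inside $C(S^{N-1}_{\mathbb R,*})$. Consider the $*$-automorphism $\tau: C(P^N_\mathbb C) \to C(P^N_\mathbb C)$ sending $p_{ij}$ to $p_{ji}$; this is well-defined, since $P^N_\mathbb C$ is commutative and the defining relation $p = p^*$ gives $p_{ij}^* = p_{ji}$, so $\tau$ is simply the standard $C^*$-involution acting on the generators. By construction $\psi_0 = \phi_0 \circ \tau$, hence it suffices to verify $\tau(I) = I$. Here the conjugation-stability of $X$, which is the $\mathbb Z_2$-factor of its $O_2$-invariance, enters: the classical conjugation $z_i \to \bar z_i$ on $X$ acts on the projective coordinates as $p_{ij} = z_i\bar{z}_j \mapsto \bar{z}_iz_j = \overline{p_{ij}} = p_{ji}$, so $PX$ is $\tau$-stable, whence its defining ideal $I$ is $\tau$-stable, and $\psi_0(I) = \phi_0(\tau(I)) = \phi_0(I)$ as required.

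Combining the two steps yields $C(X_\mathbb R^*) = C(S^{N-1}_{\mathbb R,*})/\langle \phi_0(I) \rangle$, which is exactly the universal lift of $PX \subset P^N_\mathbb C$ to $S^{N-1}_{\mathbb R,*}$ via the standard projective embedding $p_{ij} = z_iz_j$, noting from the discussion following Theorem 1.6 that $P^N_{\mathbb R,*} = P^N_\mathbb C$ so that the codomain of $PX$ matches the projective version of $S^{N-1}_{\mathbb R,*}$. The one delicate point is the translation from the $\mathbb Z_2$-invariance of $X$ to the $\tau$-invariance of $PX$; as indicated above this is a routine calculation on the commutative projective algebra, and I do not anticipate any serious obstacle beyond careful bookkeeping of the two ideals.
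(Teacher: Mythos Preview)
Your proof is correct and follows the same approach as the paper's, but with considerably more detail. The paper's one-line argument simply notes that in the real case the variables $p_{ij}=z_iz_j^*$ and $q_{ji}=z_i^*z_j$ coincide, so the conjugation-stable lift (which imposes both ideals $I,J$ from Proposition 5.4) becomes a plain lift (imposing just one); your version makes explicit the mechanism behind this collapse, namely that $\psi_0=\phi_0\circ\tau$ and that the conjugation-stability of $X$ forces $\tau(I)=I$.
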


\begin{proof}
This follows from Proposition 5.4 above, because the variables $p_{ij}=z_iz_j^*$ and $q_{ji}=z_i^*z_j$ being now equal, the conjugation-stable lift becomes a plain lift.
\end{proof}

At the level of examples now, we have the following result:

\begin{proposition}
We have the following plain/rescaled real half-liberations,
$$(S^{N-1}_\mathbb R)^*=S^{N-1}_{\mathbb R,*}\quad/\quad (\mathbb Z_2^N)^*=\widehat{\mathbb Z_2^{\diamond N}},\quad (O_N)^*=O_N^*,\quad (H_N)^*=H_N^*$$
coming respectively from the complex manifolds $S^{N-1}_\mathbb C/\mathbb T^N,U_N,K_N$.
\end{proposition}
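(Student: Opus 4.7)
My plan is to treat the four identifications separately, in each case applying Proposition 6.1 to reduce the problem to computing $X^{**}\cap S^{N-1}_{\mathbb R,+}$, or equivalently, via Proposition 6.2, to lifting $PX$ into $S^{N-1}_{\mathbb R,*}$.

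For $X=S^{N-1}_\mathbb C$ the identification $(S^{N-1}_\mathbb R)^*=S^{N-1}_{\mathbb R,*}$ will be essentially tautological: Proposition 1.4 already tells us that $S^{N-1}_{\mathbb R,*}$ is the real version of $S^{N-1}_{\mathbb C,**}$. Equivalently, from the projective viewpoint, $PS^{N-1}_\mathbb C=P^N_\mathbb C=P^N_{\mathbb R,*}$ (as recorded in the remark after Theorem 1.6), so the required lift fills the whole $S^{N-1}_{\mathbb R,*}$.

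For the remaining three cases I plan to invoke Propositions 5.5, 5.6, 5.7 and then impose self-adjointness on the standard coordinates. Concretely: for $X=\frac{1}{\sqrt{N}}\mathbb T^N$, Proposition 5.5 yields $X^{**}=\frac{1}{\sqrt{N}}\widehat{\mathbb Z^{\diamond\diamond N}}$, and the condition $z_i=z_i^*$ forces $g_i=g_i^{-1}$, collapsing $\mathbb Z^N$ to $\mathbb Z_2^N$; one then checks that on $\mathbb Z_2^N$ the operations $\diamond\diamond$ and $\diamond$ coincide, because inversion is trivial, and this yields $\frac{1}{\sqrt{N}}\widehat{\mathbb Z_2^{\diamond N}}$. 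For $X=\frac{1}{\sqrt{N}}U_N$, Proposition 5.6 gives $X^{**}=\frac{1}{\sqrt{N}}U_N^{**}$, and the self-adjointness condition $u_{ij}=u_{ij}^*$ cuts this down to a biunitary quantum group $O_N\subset G\subset O_N^+$ satisfying $abc=cba$, which is $O_N^*$. For $X=\frac{1}{\sqrt{N}}K_N$ the argument is parallel, using Proposition 5.7: the monomial relations $ab^*=a^*b=0$ together with self-adjointness and the $**$-relations produce precisely $H_N^*$.

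The main obstacle I anticipate is the bookkeeping in the $\mathbb T^N$ case, where I must confirm that the a priori discrepancy between $\diamond\diamond$ (relations $abc=cba$ with $a,b,c$ ranging over generators and their inverses) and $\diamond$ (relations $ab^{-1}c=cb^{-1}a$) vanishes on $\mathbb Z_2^N$; this will rely only on $g=g^{-1}$ and a short case analysis over the exponent patterns. The remaining two quantum-group cases should then follow by direct substitution of $u_{ij}=u_{ij}^*$ into the half-liberation relations, without new technical input.
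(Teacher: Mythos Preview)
Your proposal is correct and follows essentially the same route as the paper: the sphere case is handled as a tautology via Proposition 1.4 / the remarks after Definition 5.3, the $\mathbb T^N$ case is deduced from Proposition 5.5 by intersecting $\widehat{\mathbb Z^{\diamond\diamond N}}$ with the real sphere (the paper writes this as the one-line chain $\widehat{\mathbb Z^{\diamond\diamond N}}\cap S^{N-1}_{\mathbb R,+}=\widehat{\mathbb Z_2^{\diamond\diamond N}}=\widehat{\mathbb Z_2^{\diamond N}}$, which is exactly your $g_i=g_i^{-1}$ collapse), and the $U_N,K_N$ cases are read off directly from Propositions 5.6 and 5.7. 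Your write-up simply spells out a bit more explicitly the self-adjointness reductions $U_N^{**}\cap S^{N^2-1}_{\mathbb R,+}=O_N^*$ and $K_N^{**}\cap S^{N^2-1}_{\mathbb R,+}=H_N^*$ that the paper leaves implicit.
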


\begin{proof}
The first assertion is clear from the comments made after Definition 5.3. 

Regarding the second assertion, we can use here Proposition 5.5, which tells us that the rescaled real-half liberation in question is:
$$\widehat{\mathbb Z^{\diamond\diamond N}}\cap S^{N-1}_{\mathbb R,+}=\widehat{\mathbb Z_2^{\diamond\diamond N}}=\widehat{\mathbb Z_2^{\diamond N}}$$ 

Finally, the last two assertions are clear from Proposition 5.6 and Proposition 5.7.
\end{proof}

We have as well the following matrix model result, obtained by using the doubling operation $X\to|X|$, constructed in section 2 above:

\begin{proposition}
If $X\subset S^{N-1}_\mathbb C$ is $O_2$-invariant, then $|X|\subset X_\mathbb R^*$.
\end{proposition}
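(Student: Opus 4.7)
The plan is to invoke Proposition 6.2, which characterizes $X_{\mathbb R}^*$ as the universal closed subspace of $S^{N-1}_{\mathbb R,*}$ whose projective version lifts $PX\subset P^N_{\mathbb C}$. Consequently it is enough to produce a morphism $\pi\colon C(S^{N-1}_{\mathbb R,*})\to M_2(C(X))$ sending $z_i\to z_i'$, and then to verify that the composition $C(P^N_{\mathbb C})\to C(S^{N-1}_{\mathbb R,*})\xrightarrow{\pi}M_2(C(X))$ factors through $C(PX)$.

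The first task, existence of $\pi$ with the required target, amounts to checking $|X|\subset S^{N-1}_{\mathbb R,*}$. This follows by functoriality of the doubling construction applied to $X\subset S^{N-1}_{\mathbb C}$, combined with Proposition 2.2 above, which gives $|S^{N-1}_{\mathbb C}|\subset S^{N-1}_{\mathbb R,*}$. Next I would compute the composition. Since $z_i^*=z_i$ in $S^{N-1}_{\mathbb R,*}$, the generator $p_{ij}\in C(P^N_{\mathbb C})$ maps to
$$z_i'z_j'=\begin{pmatrix}z_iz_j^*&0\\ 0&z_i^*z_j\end{pmatrix}=\begin{pmatrix}p_{ij}^X&0\\ 0&p_{ji}^X\end{pmatrix}\in M_2(C(X)),$$
where the last equality uses commutativity of $C(X)$ to rewrite $z_i^*z_j=z_jz_i^*=p_{ji}^X$. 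For any $f$ in the defining ideal $I=\ker[C(P^N_{\mathbb C})\to C(PX)]$, the above diagonal matrices mutually commute, so $f$ evaluates to $\mathrm{diag}(f(p_{ij}^X),f(p_{ji}^X))$.

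The first diagonal entry vanishes by definition of $I$; the main obstacle is the second. This is where the conjugation-stability of $X$, which is part of the $O_2$-invariance hypothesis, becomes essential: the map $z\to\bar z$ is an automorphism of $X$ and descends to an involutive automorphism of $PX$, which at the level of $C(P^N_{\mathbb C})$ is precisely the algebra automorphism $\sigma$ swapping $p_{ij}\leftrightarrow p_{ji}$. Hence $\sigma$ preserves $I$, so $f\in I$ implies $\sigma(f)\in I$, and evaluating at $\{p_{ij}^X\}$ yields $f(p_{ji}^X)=0$ in $C(X)$. The composition therefore annihilates $I$ and factors as $C(P^N_{\mathbb C})\to C(PX)\to M_2(C(X))$, and by universality of the lift this gives $|X|\subset X_{\mathbb R}^*$. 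The $\mathbb T$-invariance of $X$ enters only implicitly, through the fact that it is built into the hypothesis ensuring $X_{\mathbb R}^*$ is well-defined as a lift of $PX$.
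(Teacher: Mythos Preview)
Your proof is correct and follows essentially the same route as the paper's: both use Proposition 2.2 to place $|X|$ inside $S^{N-1}_{\mathbb R,*}$, compute $z_i'z_j'=\mathrm{diag}(z_i\bar z_j,\bar z_iz_j)$, and then invoke the conjugation symmetry $z\mapsto\bar z$ to show that the projective version of $|X|$ coincides with $PX$, whence $|X|\subset X_{\mathbb R}^*$ by the lifting characterization. Your write-up is somewhat more explicit than the paper's, which simply says one can ``cut'' the lower diagonal entry because conjugation induces an automorphism of $C(PX)$; your formulation via $\sigma(I)=I$ makes precise exactly what that cutting means.
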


\begin{proof}
We recall from Proposition 2.2 that we have $|S^{N-1}_\mathbb C|\subset S^{N-1}_{\mathbb R,*}$, and so the result holds for $X=S^{N-1}_\mathbb C$ itself. In general now, observe that we have:
$$z_i'(z_j')^*=\begin{pmatrix}0&z_i\\ \bar{z}_i&0\end{pmatrix}
\begin{pmatrix}0&z_j\\ \bar{z}_j&0\end{pmatrix}
=\begin{pmatrix}z_i\bar{z}_j&0\\ 0&\bar{z}_iz_j\end{pmatrix}$$

Now since $X\subset S^{N-1}_\mathbb C$ is $O_2$-invariant, $z\to\bar{z}$ induces an automorphism of $C(X)$, and so an automorphism of $C(PX)$. We can therefore ``cut'' the lower part of the above matrix, and we obtain $P|X|=PX$. Thus $|X|$ lifts $PX$, and so $|X|\subset X_\mathbb R^*$, as desired.
\end{proof}

Regarding now the quantum isometry groups, the fact that we have $G^+(S^{N-1}_{\mathbb R,*})=O_N^*$ was already known from \cite{ba2}. We can improve now this result. We use:

\begin{definition}
A closed subspace $X\subset Y$ is called $k$-saturated when the dimension of $span(z_{i_1}^{e_1}\ldots z_{i_k}^{e_k})$ does not decrease via $C(Y)\to C(X)$, for any $e_1,\ldots,e_k\in\{1,*\}$.  
\end{definition}

Observe that the 1-saturation of $X\subset S^{N-1}_\mathbb C$, which is equivalent to the fact that the coordinates $z_1,\ldots,z_N\in C(X)$ are linearly independent, is needed in order to define the affine quantum isometry group $G^+(X)$, as a closed subgroup of $U_N^+$. See \cite{gos}.

The 2-saturation condition is a familiar one as well, because for a subset $X\subset S^{N-1}_\mathbb C$, this condition implies that we have $G^+(X)=G(X)$, as shown in \cite{bhg}.

We have the following result, regarding the 3-saturated sets: 

\begin{theorem}
We have the ``half-classical rigidity'' formula
$$G^+(X_\mathbb R^*)\subset O_N^*$$
provided that $X\subset S^{N-1}_\mathbb C$ is $O_2$-invariant and $3$-saturated.
\end{theorem}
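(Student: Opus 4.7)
The plan is to adapt the three-step method of Proposition 4.5 to the real half-commutative setting, with the role there played by Lemma 4.3(2) taken over by a linear independence statement derived from the $3$-saturation hypothesis.

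First I would establish that $\{z_a z_b z_c : 1 \leq a \leq c \leq N,\ 1 \leq b \leq N\}$ is linearly independent in $C(X_\mathbb R^*)$. The $O_2$-invariance of $X$ gives the doubling inclusion $|X| \subset X_\mathbb R^*$ of Proposition 6.4, producing a composition $C(X_\mathbb R^*) \twoheadrightarrow C(|X|) \hookrightarrow M_2(C(X))$ sending $z_i$ to $\bigl(\begin{smallmatrix} 0 & z_i \\ z_i^* & 0 \end{smallmatrix}\bigr)$, and hence $z_a z_b z_c$ to a matrix with upper-right entry $z_a z_b^* z_c \in C(X)$. The desired linear independence thus reduces to that of $\{z_a z_b^* z_c : a \leq c\}$ in $C(X)$; this holds in $C(S^{N-1}_\mathbb C)$ by a standard bihomogeneity argument (any vanishing relation on the sphere extends by the scaling $p(\lambda z) = \lambda^2 \bar\lambda p(z)$ to all of $\mathbb C^N$, forcing every coefficient to vanish), and transfers to $C(X)$ via $3$-saturation at exponent pattern $(1,*,1)$.

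Next I would consider any affine coaction $G \curvearrowright X_\mathbb R^*$ with $G \subset O_N^+$, set $Z_i = \sum_a u_{ia} \otimes z_a$, and exploit the relation $Z_a Z_b Z_c = Z_c Z_b Z_a$ coming from $X_\mathbb R^* \subset S^{N-1}_{\mathbb R,*}$. Expanding and grouping terms via the identification $z_i z_j z_k = z_k z_j z_i$, and writing $\Phi(pq, rs, tu) := u_{pq} u_{rs} u_{tu} - u_{tu} u_{rs} u_{pq}$, the linear independence from the previous step forces
$$\Phi(ai, bj, ci) = 0 \quad (\text{all indices}), \qquad \Phi(ai, bj, ck) = \Phi(ci, bj, ak) \quad (i < k).$$

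Finally, applying the antipode $S(u_{ij}) = u_{ji}$ to the second family yields the dual relations $\Phi(ia, jb, kc) = \Phi(ic, jb, ka)$ for $i < k$, giving a symmetry in the column indices rather than the row indices. Combining these two symmetries through the antisymmetry $\Phi(pq, rs, tu) = -\Phi(tu, rs, pq)$ forces $\Phi(pq, rs, tu) = 0$ whenever $p \neq t$ and $q \neq u$; the diagonal cases are handled directly ($q = u$ by the first family of relations, and $p = t$ by applying $S$ to it). This yields the half-commutation $u_{pq} u_{rs} u_{tu} = u_{tu} u_{rs} u_{pq}$ on $C(G)$, and hence $G \subset O_N^*$. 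I expect the main delicacy to be the systematic case analysis in this last step; however, it runs in close parallel to the multi-case argument carried out in the proof of Proposition 4.5, and is simplified here by the absence of $*$-exponents on the $u_{ij}$.
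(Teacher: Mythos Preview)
Your proposal is correct and follows essentially the same approach as the paper: transfer the linear independence of $\{z_az_bz_c:a\leq c\}$ to $C(X_\mathbb R^*)$ via the doubling $|X|\subset X_\mathbb R^*$ and the $3$-saturation hypothesis, then run the antipode/relabel method of Proposition~4.5. Your final step, combining the row-swap symmetry with its antipode (column-swap) through the antisymmetry of $\Phi$, is a clean shortcut to $\Phi\equiv 0$ that avoids the lengthy case analysis of Proposition~4.5; the paper's proof is terse enough that it simply invokes that method, so this is a minor streamlining within the same strategy.
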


\begin{proof}
Assuming that $X\subset S^{N-1}_\mathbb C$ is 3-saturated, the doubling $|X|\subset S^{N-1}_{\mathbb R,*}$ is $3$-saturated as well, and we conclude that the half-liberation $X_\mathbb R^*\subset S^{N-1}_{\mathbb R,*}$ is $3$-saturated too.

Thus, the variables $\{z_az_b^\times z_c|a\leq c\}$ with $\times=1,*$ are linearly independent, and so the method in the proof of Proposition 4.5 applies, and gives the result.
\end{proof}

Observe the similarity between the above result and Theorem 5.8. 

As a conclusion, our various results suggest that a certain analogue of the rigidity result in \cite{gjo} should hold in real and complex half-liberated affine geometry. Finding such a general result, however, looks like a quite difficult question.

\end{document}